\newtheorem{theorem}{Theorem}[section]
\newtheorem{lemma}[theorem]{Lemma}
\theoremstyle{definition}
\newtheorem{definition}{Definition}[section]
\newtheorem{example}[theorem]{Example}
\newtheorem{proposition}{Proposition}[section]
\newtheorem{corollary}[theorem]{Corollary}
\newtheorem{condition}[theorem]{Condition}
\theoremstyle{remark}
\newtheorem{remark}{Remark}[section]
\numberwithin{equation}{section}
\newcommand{\Rmnum}[1]{\expandafter\@slowromancap\romannumeral #1@}
\begin{document}

\title{ Topological pressure of free semigroup actions for non-compact sets and Bowen's equation}


\author{Qian Xiao}
\address{School of Mathematics, South China University of Technology, Guangzhou 510641, P.R. China}
\email{qianqian1309200581@163.com}

\author{Dongkui Ma*}
\thanks{* Corresponding author}
\address{School of Mathematics, South China University of Technology,
Guangzhou 510641, P.R. China}
\email{dkma@scut.edu.cn}

\subjclass[2010]{37B40, 37C45, 37C85}



\keywords{C-P structure, Free semigroup actions, Topological pressure, Skew-product, Hausdorff dimension, Bowen's equation.}

\begin{abstract}
Climenhaga [V. Climenhaga, Bowen's equation in the non-uniform setting, Ergodic Theory Dynam. Systems (2011), 31, 1163-1182] showed the applicability of Bowen's equation to arbitrary subset of a compact metric space. The main purpose of this paper is to generalize the main result of Climenhaga to free semigroup actions for non-compact sets. We introduce the notions of the topological pressure and lower and upper capacity topological pressure of a free semigroup action for non-compact sets by using the Carath\'{e}odory-Pesin structure (C-P structure). Some properties of these notions are given, followed by three main results. One is to characterize the Hausdorff dimension of arbitrary subset in term of the topological pressure by Bowen's equation, whose points have the positive lower Lyapunov exponents relative to $\omega \in \Sigma_m^+$ and satisfy a tempered contraction condition, the other is the estimation of topological pressure of a free semigroup action on arbitrary subset of $X$ and the third is the relationship between the upper capacity topological pressure of a skew-product transformation and the upper capacity topological pressure of a free semigroup action with respect to arbitrary subset.
\end{abstract}

\maketitle

\section{ \emph{Introduction}}

As an application of topological pressure, Bowen's equation has an important impact on dynamical system. Bowen\cite{Bowen3} first gave the connection between topological pressure and Hausdorff dimension and proved that for a certain compact sets (quasi-circles) $J \in \mathbb{C}$ which arise as invariant sets of fractional linear transformations $f$ of the Riemann sphere, the Hausdorff dimension $t=dim_{H} J$ is the unique root of the equation
\begin{equation}
P_{J}(-t\varphi)=0,
\end{equation}
where $P_{J}$ is the topological pressure of the map $f: J \rightarrow J$, and $\varphi$ is the geometric potential $\varphi(Z)=\log |f'(z)|$. Later, Ruelle\cite{Ruelle} showed that Bowen's equation (1.1) gives the Hausdorff dimension of $J$ whenever $f$ is a $C^{1+\varepsilon}$ conformal map on a Riemann manifold and $J$ is a repeller. This result was eventually extended to the case where $f$ is $C^{1}$ by Gatzouras and Peres\cite{Gatzouras}. Later, one can also give a defintion of conformal map in the case where $X$ is a metric space (not necessarily a manifold), and the analogous result was proved by Rugh\cite{Rugh}. For the so-called parabolic Cantor set and Julia set, the authers charaterized Hausdorff dimension with Bowen's equation (see, for example,\cite{Ban,Cao,Denker,Mayer,Mayer1,Przytycki,Przytycki1,Urbanski1,Urbanski2,Urbanski3}).

Given a map $f$, all of the above results give the Hausdorff dimension of one very particular dynamically significant set $J$ via Bowen's equation. It is natural to ask if one can give the Hausdorff  dimension of subset $Z \subset J$ via a similar approach.

For certain subsets, results in this direction are given by the multifractal analysis (see, for example, \cite{Ba1,Pesin2,Weiss}).

Pesin \cite{Pesin}  gave a new characterization of topological pressure for non-compact sets by Carath\'{e}odory structure, which we call  Carath\'{e}odory-Pesin structure or C-P structure for short. This extended the earlier definition of topological entropy for non-compact sets by Bowen\cite{Bowen}.
Using the C-P structure introduced in \cite{Pesin}, Barreira and Schmeling \cite{Ba2} introduced the notion of the $\emph{u-dimension}$ $dim_{u} Z$ for positive functions $u$, showing that $dim_{u} Z$ is the unique number $t$ such that $P_{Z}(-tu)=0$. They also showed that for a subset $Z$ of a conformal repeller $J$, where we may take $u=\log \|Df\|>0$, we have $dim_{u} Z=dim_{H} Z$, hence the Hausdorff dimension of any subset $Z \subset J$ is given by Bowen's equation, whether or not $Z$ is compact or invariant.
 Moreover, Climenhaga\cite{Climenhaga} showed that the applicability of Bowen's equation to arbitrary $Z$, requiring only that the lower Lyapunov exponents be positive on $Z$, together with a tempered contraction condition, which extends beyond the uniformly expanding case.
\begin{definition}\label{7.1d}
$f: X \longrightarrow X$ is called \emph{conformal} with factor $a(x)$ if for every $x \in X$, we have
\[
a(x)=\lim_{y\rightarrow x}\frac{d(f(x),f(y))}{d(x,y)},
\]
where $a:X\longrightarrow [0,\infty)$ is continuous.
\end{definition}
Denote the Birkhoff sums of $\log a$ by
\[
\lambda_{n}(x)=\frac{1}{n}S_{n}(\log a)(x)=\frac{1}{n}\sum_{k=0}^{n-1} \log a(f^{k}(x));
\]
the lower and upper limits of this sequence are the \emph{lower Lyapunov exponent} and \emph{upper Lyapunov exponent}, respectively:
\[
\underline{\lambda}(x)=\liminf_{n\rightarrow \infty}\lambda_{n}(x),~~~~\overline{\lambda}(x)=\limsup_{n\rightarrow \infty}\lambda_{n}(x).
\]
If the limit exists, then their common value is the \emph{Lyapunov exponent}:
\[
\lambda(x)=\lim_{n\rightarrow \infty}\lambda_{n}(x).
\]
In studying the relationship between the Hausdorff dimension of $Z$ and the topological pressure of $\log a$ on $Z$, provided every point in $Z$ has positive lower Lyapunov exponent and satisfies the following \emph{tempered contraction condition} (\cite{Climenhaga}):
\begin{equation}\label{1.2}
\inf_{n \in \mathbb{N},~0\leq k \leq n} \{S_{n-k} \log a(f^{k}(x))+n\varepsilon\} > -\infty ~~for~every~\varepsilon>0.
\end{equation}
Denote $\mathcal{B}$ by the set of all points in $X$ which satisfy (\ref{1.2}).
Given $E \subset \mathbb{R}$ and let $\mathcal{A}(E)$ be the set of points along whose orbits all the asymptotic exponential expansion rates of the map $f$ lie in $E$:
\[
\mathcal{A}(E)=\{x\in X:[\underline{\lambda}(x),\overline{\lambda}(x)]\subseteq E\}.
\]
And $\mathcal{A}(\alpha)=\mathcal{A}(\{\alpha\})$.

More precisely, Climenhaga\cite{Climenhaga} proved the following ([13, Theorem 2.4]) :
\begin{theorem}
Let $X$ be a compact metric space and $f: X\rightarrow X$ be continuous and conformal with factor $a(x)$. Suppose that $f$
has no critical points and no singularities-that is, that $0<a(x)<\infty$ for all $x \in X$. Consider $Z \subset \mathcal{A}((0,\infty))\bigcap \mathcal{B}$. Then the Hausdorff dimension of $Z$ is given by
\begin{align*}
dim_{H} Z=t^{*}&=\sup\{ t\geq 0: P_{Z}(f,-t \log a )>0\}\\
&=\inf \{ t\geq 0: P_{Z}(f,-t \log a ) \leq 0\}.
\end{align*}
Furthermore, if $Z \subset \mathcal{A}((\alpha,\infty))\bigcap \mathcal{B}$ for some $\alpha >0$, then $t^{*}$ is the unique root of Bowen's equation
\[
P_{Z}(f,-t \log a )=0.
\]
Finally, if $Z \subset \mathcal{A}(\alpha)$ for some $\alpha >0$, then $P_{Z}(f,-t \log a)=h_{top}(Z)-t\alpha$, and hence
\[
dim_{H} Z=\frac{1}{\alpha} h_{top}(Z).
\]
where $P_{Z}(f,-t \log a)$ denotes the topological pressure of $f$ with respect to $-t \log a$ on $Z$, $h_{top}(Z)$ is the topological entropy on $Z$ and $dim_{H}(Z)$ is the Hausdorff dimension of $Z$.
\end{theorem}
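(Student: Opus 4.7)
The plan is to exploit the geometric correspondence between Bowen dynamical balls $B_n(x,\delta)$ and ordinary metric balls that is forced by conformality, and then to transfer this correspondence from the Carath\'eodory--Pesin structure defining $P_Z(f,-t\log a)$ to the Carath\'eodory structure defining $\dim_H Z$. Under the conformality hypothesis, $f^n$ maps a neighborhood of $x$ onto its image with roughly constant Jacobian $e^{S_n\log a(x)}$, so the Bowen ball $B_n(x,\delta)$ should look like a metric ball centered at $x$ of radius approximately $\delta e^{-S_n\log a(x)}$. Consequently the weight $e^{-tS_n\log a(x)}$ appearing in the pressure sum matches $(\mathrm{diam}/\delta)^t$ appearing in the Hausdorff sum, up to bounded distortion corrections.

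The technical heart of the argument is a \emph{Bowen ball volume lemma}: for every $x \in \mathcal{A}((0,\infty))\cap\mathcal{B}$ and every $\gamma>0$, there exist $\delta_0(x,\gamma)>0$ and $C(x,\gamma)>0$ such that for all $n\geq 1$ and all $\delta\leq\delta_0$,
\[
B\bigl(x,\,C^{-1}\delta\,e^{-S_n\log a(x)-n\gamma}\bigr)\ \subset\ B_n(x,\delta)\ \subset\ B\bigl(x,\,C\delta\,e^{-S_n\log a(x)+n\gamma}\bigr).
\]
The right inclusion follows from continuity of $a$ together with positivity of $\underline{\lambda}(x)$, which forces points in small forward neighborhoods to remain close and ensures overall expansion on the Birkhoff average. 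The left inclusion is precisely where (\ref{1.2}) is used: the tempered contraction condition rules out intermediate times $k<n$ at which the forward iterates $f^k(x)$ sit in a region of anomalously strong contraction, which would otherwise let $B_n(x,\delta)$ be much smaller than the predicted size and destroy the conversion from metric covers to Bowen covers.

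Given this lemma, the three conclusions of the theorem are assembled as follows. For the upper bound $\dim_H Z\leq t^*$, fix $t>t^*$ so that $P_Z(f,-t\log a)\leq 0$; the admissible collections of Bowen balls witnessing the pressure bound convert via the right inclusion into metric covers whose $(t+\eta)$-Hausdorff sum is finite, and letting $\gamma,\eta\to 0$ forces $\dim_H Z\leq t^*$. For the lower bound, fix $t<t^*$ so that $P_Z(f,-t\log a)>0$; any metric cover of $Z$ by small balls can, via the left inclusion, be expanded into an admissible Bowen-ball collection of matching Carath\'eodory weight, so positivity of pressure forces positivity of the $t$-dimensional Hausdorff measure and hence $\dim_H Z\geq t^*$. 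The uniqueness of the root under $Z\subset\mathcal{A}((\alpha,\infty))\cap\mathcal{B}$ follows because the uniform lower bound $\underline{\lambda}>\alpha$ makes $t\mapsto P_Z(f,-t\log a)$ strictly decreasing with slope at most $-\alpha$. The constant-exponent case $Z\subset\mathcal{A}(\alpha)$ reduces directly: $S_n\log a(x)=n\alpha+o(n)$ uniformly in the Carath\'eodory sense, so the weight factors out of the pressure sum and $P_Z(f,-t\log a)=h_{\mathrm{top}}(Z)-t\alpha$. The main obstacle throughout is the left inclusion of the volume lemma, because without uniform expansion one must inductively track distortion along an orbit whose contraction rates are merely controlled in the tempered sense of (\ref{1.2}); converting that one-sided liminf condition into a usable pointwise modulus $C(x,\gamma)$ is what makes the non-uniform setting genuinely harder than the classical repeller case.
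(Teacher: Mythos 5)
Your proposal follows essentially the same route as the paper's proof of its Theorem 2.1, of which the statement above is the $m=1$ specialization: the Bowen-ball volume lemma relating $B_n(x,\delta)$ to metric balls is the paper's Lemma 6.1, the two cover-conversion arguments for the upper and lower Hausdorff-dimension bounds are the paper's Lemma 6.2, and the uniqueness of the root via a two-sided slope estimate for $t\mapsto P_Z(-t\log a)$ is the paper's Proposition 6.1.

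The one step you describe but do not actually carry out is precisely the step that makes the non-uniform setting work. Your volume lemma constant $C(x,\gamma)$ (the paper's $\eta(x,\varepsilon)$) depends on the point $x$, so one cannot directly convert covers of all of $Z$: a single metric cover of $Z$ has no uniform control on how much it can be enlarged to a Bowen-ball cover. The paper resolves this by decomposing $Z=\bigcup_k Z_k$, where $Z_k$ is the set of points for which the left inclusion of the volume lemma holds with $\eta=e^{-k}$ (and separately, for the upper bound, $Z_k=\{x: \lambda_n(x)>\alpha~\text{for all}~n\geq k\}$), establishing the dimension identity on each $Z_k$, and then using countable stability of both the topological pressure ($P_Z=\sup_k P_{Z_k}$, Proposition 4.2(3)) and Hausdorff dimension to pass to $Z$. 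The final sup/inf characterizations $t^*=\sup\{t:P_Z>0\}=\inf\{t:P_Z\le 0\}$ also require this exhaustion: one takes a sequence $\alpha_k\downarrow 0$, applies the unique-root case to $Z\cap\mathcal{A}((\alpha_k,\infty))\cap\mathcal{B}$, and argues $t^*=\sup_k t_k$. Your phrase ``letting $\gamma,\eta\to 0$'' does not substitute for this, because the constant is attached to a point, not to a parameter one is free to send to zero uniformly on $Z$. A minor further imprecision: the right inclusion of the volume lemma does not use positivity of $\underline\lambda(x)$; it follows from uniform continuity on $X\times X$ of the extended distortion function $\zeta(x,y)=\log d(f(x),f(y))-\log d(x,y)$. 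Positivity of $\underline\lambda$ enters later, to ensure that the metric radii $\delta e^{-n(\lambda_n(x)-\varepsilon)}$ tend to zero as $n\to\infty$ and that the pressure function actually changes sign.
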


The above result is for a single map. In the past two decades, people have studied a series of free semigroup actions. Related studies include  \cite{Bis,Bufetov,Carvalho1,Carvalho2,Carvalho3,Ju,Lin,Ma,Rodrigues,Wang,Wang1}. Naturally, we wonder if the result of Theorem 1.1 is true in the case of free semigroup actions. Hence in this paper, we introduce the notions of new topological pressure by using C-P structure. By using this topological pressure, we can answer the above question and then generalize the result of Climenhaga \cite{Climenhaga}.

This paper is organized as follows. In section 2, we give our main results. In section 3, we give some preliminaries. In section 4, by using the C-P structure we give the new definitions of the  topological pressure, lower and upper capacity topological pressure of a free semigroup action and several of their properties are provided. In section 5, we give the other two definitions of the topological pressure, the upper and lower capacity topological pressure of a free semigroup action on arbitrary subset of $X$, which is equivalent to the definition in section 4. In section 6, 7, 8, we give the proofs of the main results, respectively.

\section{\emph{Statement of main results}}

Let $(X,d)$ be a compact metric space, denote $G$ the free semigroup with $m$ generators $\{f_0,f_1,\ldots,f_{m-1}\}$ acting on X and for each $f_{i} \in G$, $f_{i}$  is continuous and conformal with factor $a_{i}(x)$. Let $\Phi=\{\log a_{0},\log a_{1},\cdots, \log a_{m-1}\}$. For $w=i_{1}i_{2}\cdots i_{n} \in F_{m}^{+}$, where $F_{m}^{+}$ denotes the set of all finite words of symbols $0,1,\ldots,m-1$, denote
\[
S_{w}\Phi(x):=\log a_{i_{1}}(x)+\log a_{i_{2}}(f_{i_{1}}(x))+\cdots+\log a_{i_{n}}(f_{i_{n-1}i_{n-2}\cdots i_{1}}(x))
\]
and
\begin{align*}
\lambda_{w}(x)&=\frac{1}{|w|}S_{w}\Phi(x),
\end{align*}
where
\[
a_{i}(x)=\lim\limits_{y\rightarrow x} \frac{d(f_{i}(x),f_{i}(y))}{d(x,y)}, ~~i=0,1,\cdots,m-1,
\]
and
\[
f_{i_{n-1}i_{n-2}\cdots i_{1}}:=f_{i_{n-1}}\circ f_{i_{n-2}} \circ \cdots \circ f_{i_{1}}.
\]
And for any $\omega =(i_{0},i_{1},\cdots)\in \Sigma_m^+$, where $\Sigma_m^+$ denotes the one-side symbol space, denote $\omega|_{[0,n-1]}:=i_{0}i_{1}\cdots i_{n-1}$ and set
\[
\underline{\lambda}_{\omega}(x)=\liminf_{n\rightarrow \infty} \lambda_{\omega|_{[0,n-1]}}(x),
\]
and
\[
\overline{\lambda}_{\omega}(x)=\limsup_{n\rightarrow \infty} \lambda_{\omega|_{[0,n-1]}}(x);
\]
then we call $\underline{\lambda}_{\omega}(x)$ and $\overline{\lambda}_{\omega}(x)$ the \emph{lower and upper Lyapunov exponents} of the free semigroup $G$ relative to $\omega \in \Sigma_m^+$ at $x$. If the two are equal (i.e., if the limit exists), then their common value is the \emph{Lyapunov exponent} relative to $\omega \in \Sigma_m^+$ at $x$:
\[
\lambda_{\omega}(x)=\lim_{n\rightarrow \infty} \lambda_{\omega|_{[0,n-1]}}(x).
\]

Our main result relates the Hausdorff dimension of $Z$ to the topological pressure of $\Phi$ on $Z$, provided every point in $Z$ has $\underline{\lambda}_{\omega}(x)>0$ and satisfies the following so-called \emph{tempered contraction condition}:

\begin{equation}\label{7.2}
\inf_{w\in F_{m}^{+}, \overline{w'}\leq \overline{w}}\{ S_{w}\Phi(x)-S_{w'}\Phi(x)+|w|\varepsilon\}>-\infty,~for~any~\varepsilon>0.
\end{equation}
Denote $\mathcal{B}$ as the set of all points in X which satisfy (\ref{7.2}) .

\begin{remark}
 (1) Proposition 6.2 shows that if for any the $\omega \in \Sigma_m^+$, the Lyapunov exponent of $x$ exists and is positive, that is, if $\underline{\lambda}_{\omega}(x)=\overline{\lambda}_{\omega}(x)>0$ for every $\omega \in \Sigma_m^+$, then $x$ satisfies (\ref{7.2}).\\
 (2) We observe that if $a_{i}(x) \geq 1$ for any $x \in X$ and $i=0,1,\cdots,m-1$, then $\mathcal{B}=X$.\\
 (3) We say that $x$ has \emph{bounded  contraction} if $\inf\{ S_{w}\Phi(x)-S_{w'}\Phi(x): w\in F_{m}^{+}, \overline{w'}\leq \overline{w}\}>-\infty.$ Any point which has bounded contraction  satisfies (\ref{7.2}).
\end{remark}

Given $E \subset \mathbb{R}$ and let $\mathcal{A}(E)$ be the set of points along whose orbits all the asymptotic exponential expansion rates of the $G$ relative to $\omega \in \Sigma_m^+$ lie in $E$:
\[
\mathcal{A}(E)=\{x\in X:[\underline{\lambda}_{\omega}(x),\overline{\lambda}_{\omega}(x)]\subseteq E, \omega \in  \Sigma_m^+\}.
\]

In particular, $\mathcal{A}((0,\infty))$ is the set of all points for which $\underline{\lambda}_{\omega}(x)>0$ for every $\omega \in  \Sigma_m^+$ and $\mathcal{A}(\alpha)=\mathcal{A}(\{\alpha\})$. The first main result deals with subsets $Z \subset X$ that lie in both $\mathcal{A}((0,\infty))$ and $\mathcal{B}$.

\begin{theorem}\label{7.3t}
Let $(X,d)$ be a compact metric space, $G$ the free semigroup with $m$ generators $\{f_0,f_1,\ldots,f_{m-1}\}$ acting on X and $f_{i}: X\longrightarrow X$ continuous and conformal with factor $a_{i}(x)$. Assume $f_{i}$ has no critical points and singularities, that is, $0< a_{i}(x)< \infty$ for all $x \in X$ and $i \in \{0,1,2,\cdots,m-1\}$. Consider $Z \subset \mathcal{A}((0,\infty))\bigcap \mathcal{B}$ and $\Phi=\{\log a_{0},\log a_{1},\cdots, \log a_{m-1}\}$. Then the Hausdorff dimension of $Z$ is given by
\begin{align*}
dim_{H} Z=t^{*}&=\sup\{ t\geq 0: P_{Z}(G,-t \Phi )>0\}\\
&=\inf \{ t\geq 0: P_{Z}(G,-t \Phi ) \leq 0\}.
\end{align*}
Furthermore, if $Z \subset \mathcal{A}((\alpha,\infty))\bigcap \mathcal{B}$ for some $\alpha >0$, then $t^{*}$ is the unique root of Bowen's equation
\[
P_{Z}(G,-t \Phi )=0.
\]
Finally, if $Z \subset \mathcal{A}(\alpha)$ for some $\alpha >0$, then $P_{Z}(G,-t \Phi )=h_{Z}(G)-t\alpha$, and hence
\[
dim_{H} Z=\frac{1}{\alpha} h_{Z}(G).
\]
Where $h_{Z}(G)$ is the topological entropy on $Z$ defined by Ju et al in \cite{Ju} and $ P_{Z}(G,-t \Phi )$ denotes the topological pressure of $G$ with respect to $-t\Phi$ on $Z$ (see Section 4).
\end{theorem}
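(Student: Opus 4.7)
My plan is to adapt Climenhaga's argument \cite{Climenhaga} to the free semigroup setting, exploiting conformality to compare dynamical Bowen balls with geometric metric balls along each infinite word $\omega \in \Sigma_m^+$. For each $\omega, n, x, \delta$ define the Bowen ball
\[
B_\omega(x,n,\delta) = \{ y \in X : d(f_{\omega|_{[0,k-1]}}(x), f_{\omega|_{[0,k-1]}}(y)) < \delta,\ 0 \leq k \leq n \},
\]
(using the same composition convention as in $S_w\Phi$). Conformality of each $f_i$ with factor $a_i(x)$ gives, for each $\delta$ small, a constant $C(\delta)$ such that $B_\omega(x,n,\delta)$ has diameter $\asymp \delta \exp(-S_{\omega|_{[0,n-1]}}\Phi(x))$ (up to multiplicative error tending to $1$ as $\delta \to 0$), so Bowen balls and metric balls are, up to bounded distortion, interchangeable. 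The problem then reduces, as in the single-map case, to showing the two inequalities $\dim_H Z \leq t^*$ and $\dim_H Z \geq t^*$; once these are in hand, monotonicity of $t \mapsto P_Z(G,-t\Phi)$ makes the two expressions for $t^*$ identical.

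For the upper bound $\dim_H Z \leq t^*$, I would fix $t > t^*$ and $\gamma > 0$. By definition $P_Z(G,-t\Phi) \leq 0$, so from the C-P construction of pressure in Section~4 there exist $\omega$-indexed families of dynamical covers $\{B_\omega(x_i,n_i,\delta)\}$ of $Z$ (with $n_i \geq N$ arbitrarily large) satisfying
\[
\sum_i \exp\bigl(-t S_{\omega|_{[0,n_i-1]}}\Phi(x_i) + \gamma n_i\bigr) < 1.
\]
Converting these into metric covers using the diameter estimate above and choosing $\gamma$ small will bound the $t$-dimensional Hausdorff sum of $Z$, giving $\dim_H Z \leq t$; letting $t \downarrow t^*$ finishes the upper bound. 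The positivity of $\underline{\lambda}_\omega(x)$ on $Z$ ensures the converted cover has arbitrarily small mesh.

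The serious work is the lower bound $\dim_H Z \geq t^*$, where I have to convert a small Hausdorff sum back into a small dynamical sum. Fix $t < t^*$, so $P_Z(G,-t\Phi) > 0$; I must show the $t$-Hausdorff measure of $Z$ is positive. Given any metric cover $\{U_j\}$ of $Z$, pick $x_j \in U_j \cap Z$, fix any $\omega \in \Sigma_m^+$, and define the stopping time
\[
n_j(\omega) = \min\{ n : \operatorname{diam} U_j \geq \delta \exp(-S_{\omega|_{[0,n-1]}}\Phi(x_j)) \}.
\]
Using the conformality one shows $U_j$ is contained in a Bowen ball $B_\omega(x_j,n_j,\delta')$, and the minimality of $n_j$ plus the tempered contraction condition \eqref{7.2} lets us control the one-step discrepancy, so that $\operatorname{diam}(U_j)^t$ differs from $\exp(-tS_{\omega|_{[0,n_j-1]}}\Phi(x_j))$ by at most a factor $e^{\gamma n_j}$ with $\gamma$ arbitrarily small. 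The main obstacle is exactly this step: in the free semigroup case the stopping time depends on $\omega$, $\lambda_{\omega}(x)$ need not equal $\lambda_{\omega'}(x)$, and the "contraction cost" between consecutive compositions can be unbounded along generic words. Overcoming it requires (i) using \eqref{7.2} to get a uniform temperedness estimate valid simultaneously for all prefixes of $\omega$, and (ii) averaging or taking the supremum over $\omega \in \Sigma_m^+$ consistent with the definition of $P_Z(G,-t\Phi)$ adopted in Section~4. Concatenating then yields a dynamical cover with sum $\leq e^{\gamma N} \sum_j \operatorname{diam}(U_j)^t$, bounding $P_Z(G,-t\Phi) \leq t\gamma$; pushing $\gamma \to 0$ contradicts $P_Z(G,-t\Phi)>0$.

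With the characterization of $t^*$ in hand, the second assertion is a quick consequence: when $Z \subset \mathcal{A}((\alpha,\infty)) \cap \mathcal{B}$, comparing the pressure at $t$ and $t'$ with $t < t'$ gives
\[
P_Z(G,-t'\Phi) \leq P_Z(G,-t\Phi) - (t'-t)\alpha,
\]
because for every $x \in Z$ and every $\omega$ the multiplier $-(t'-t)S_{\omega|_{[0,n-1]}}\Phi(x)/n$ is eventually $\leq -(t'-t)\alpha$. Hence $P_Z(G,-t\Phi)$ is strictly decreasing on $[0,\infty)$, so Bowen's equation has at most one root, and by the first assertion $t^*$ is that root. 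Finally, for $Z \subset \mathcal{A}(\alpha)$, the ratio $S_{\omega|_{[0,n-1]}}\Phi(x)/n$ tends to $\alpha$ for every $x \in Z$ and every $\omega$, so the C-P construction of $P_Z(G,-t\Phi)$ factors: the weight $\exp(-tS_{\omega|_{[0,n-1]}}\Phi(x) + \gamma n)$ differs from $\exp(-t\alpha n + \gamma n)$ by a subexponential factor, giving $P_Z(G,-t\Phi) = h_Z(G) - t\alpha$ by comparison with the entropy definition of \cite{Ju}. Setting this to zero yields $\dim_H Z = h_Z(G)/\alpha$.
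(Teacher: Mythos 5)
Your overall route is the same as the paper's, which in turn adapts Climenhaga's argument: compare Bowen balls with metric balls via conformality and tempered contraction, prove $\dim_H Z \leq t^*$ and $\dim_H Z \geq t^*$, and then deduce the remaining assertions from monotonicity of $t \mapsto P_Z(G,-t\Phi)$. However, there is a genuine imprecision at the very first step that, taken literally, is false and would derail the argument.

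You assert that conformality gives $\operatorname{diam} B_\omega(x,n,\delta) \asymp \delta\exp(-S_{\omega|_{[0,n-1]}}\Phi(x))$ ``up to multiplicative error tending to $1$ as $\delta\to 0$,'' so that Bowen balls and metric balls are interchangeable ``up to bounded distortion.'' That is not correct in this non-uniform setting. The actual two-sided comparison (the paper's Lemma 6.3) reads
\[
B\bigl(x,\eta\,\delta\, e^{-|w|(\lambda_w(x)+\varepsilon)}\bigr) \subset B_w(x,\delta) \subset B\bigl(x,\delta\, e^{-|w|(\lambda_w(x)-\varepsilon)}\bigr),
\]
where the ratio between outer and inner radius is $\eta^{-1}e^{2|w|\varepsilon}$, which grows exponentially in $|w|$, and where $\eta=\eta(x,\varepsilon)>0$ depends on $x$ via the tempered contraction condition \eqref{7.2} and is \emph{not} uniform over $Z$. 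This lack of uniformity is not a cosmetic issue: it is precisely why the paper must decompose $Z=\bigcup_k Z_k$, taking $Z_k$ to be the set on which \eqref{7.3} holds with $\eta=e^{-k}$ (and, in a separate decomposition for the upper bound, the set on which $\lambda_w(x)>\alpha$ for all $|w|\geq k$), prove $\dim_H Z_k = t_k$ on each piece, and use countable stability of both Hausdorff dimension and pressure (Proposition~\ref{3.1p}(3)). Your sketch gestures at ``a uniform temperedness estimate valid simultaneously for all prefixes of $\omega$,'' but no such uniform estimate over $Z$ is available, and without the $Z_k$ exhaustion the conversion from a metric cover to a dynamical cover in the lower bound does not close. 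Relatedly, your phrase ``averaging or taking the supremum over $\omega\in\Sigma_m^+$'' leaves unaddressed the specific structure of the pressure defined in Section~5, which is a normalized sum $\frac{1}{m^N}\sum_{|w|=N} M'_w$; the paper handles this by proving a pointwise-in-$w$ inequality (e.g.\ $m^b_H(Z_k,t,r)\geq C\, M'_w(Z_k,t\varepsilon,-t\Phi,\delta,N)$ for every $w=\omega|_{[0,N-1]}$) and then averaging, which is the step you need to make explicit. Once the $Z_k$ decomposition and the $w$-wise estimate are supplied, the rest of your outline (the monotonicity $P_Z(G,-(t+h)\Phi)\leq P_Z(G,-t\Phi)-\alpha h$ on $\mathcal{A}((\alpha,\infty))$, uniqueness of the root, and the $\mathcal{A}(\alpha)$ case giving $h_Z(G)-t\alpha$) matches the paper's Proposition~\ref{7.1p} and is fine.
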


\begin{remark}
When $m=1,$ i.e., $G=\{f\},~\Phi=\{\log a\}$, the result coincides with \cite{Climenhaga}.
\end{remark}

Next, we give our second main result. Considering a compact metric space $(X,d)$, a free semigroup $G=\{f_0, f_1, \ldots, f_{m-1}\}$  acting on $X$, where $f_{i}(i=0,1,\cdots,m-1)$ is continuous transformation of $X$ and $\varphi_{0}, \varphi_{1}, \cdots, \varphi_{m-1}  \in C(X,\mathbb{R})$, where $C(X,\mathbb{R})$ denotes the Banach algebra of real-valued continuous functions of $X$ equipped with the supremum norm. Denote $\Phi=\{\varphi_{0}, \varphi_{1}, \cdots, \varphi_{m-1}\} $. For $w=i_{1}i_{2}\cdots i_{n} \in F_{m}^{+}$, $B_{w}(x,r)$ is the $(w,r)$-Bowen ball at $x$ and denote
\[
S_{w}\Phi(x):=\varphi_{i_{1}}(x)+\varphi_{i_{2}}(f_{i_{1}}(x))+\cdots+\varphi_{ i_{n}}(f_{i_{n-1}i_{n-2}\cdots i_{1}}(x)).
\]
Let $\mu$ be a Borel probability measure on $X$ and $w\in F_m^{+}$. Denote
\begin{equation*}
\underline{P}(G, \Phi, x):=\lim\limits_{r\rightarrow 0}\liminf_{n\rightarrow \infty}-\frac{1}{n}\max_{|w|=n}\{\log\mu \big(B_{w}(x,r) \big)-S_{w}\Phi(x)\},
\end{equation*}
and
\begin{equation*}
\overline{P}(G, \Phi, x):=\lim\limits_{r\rightarrow 0}\liminf_{n\rightarrow \infty}-\frac{1}{n}\min_{|w|=n}\{\log\mu \big(B_{w}(x,r) \big)-S_{w}\Phi(x)\}.
\end{equation*}

Now we give two estimates about topology pressure on $Z \subset X$.
\begin{theorem}\label{2.2t}
Let $\mu$ denote a Borel probability measure on $X$, $Z$ be a Borel subset of $X$ and $ s \in (0,\infty)$.
\begin{itemize}
    \item[(1)]
    ~If~ $\underline{P}(G, \Phi, x)\ge s$~ for~ all ~$x\in Z$ ~and ~$\mu(Z)>0$ ~then~  $P_{Z}(G,\Phi) \ge s$.

    \item[(2)]
    ~If~ $\overline{P}(G, \Phi, x)\le s$~ for ~all ~$x\in Z$  then  ~ $P_{Z}(G,\Phi)\le s$.
\end{itemize}
\end{theorem}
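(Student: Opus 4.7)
The plan is to view the two parts as the pressure analogues of, respectively, the Frostman mass-distribution principle and the Vitali-type upper bound, run inside the C-P structure of Section 4 and adapted to Bowen balls for the free semigroup. The common first step is to translate the hypotheses into pointwise inequalities on $\mu$: for every $s' < s$, the condition $\underline{P}(G,\Phi,x) \geq s$ gives, uniformly in $w$, $\mu(B_w(x,r)) \leq \exp(S_w\Phi(x) - s'|w|)$ for all $|w| \geq N(x,r)$ and all sufficiently small $r$; while for every $s' > s$, the condition $\overline{P}(G,\Phi,x) \leq s$ gives, at every $r > 0$, infinitely many words $w$ with $\mu(B_w(x,r)) > \exp(S_w\Phi(x) - s'|w|)$.

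For part (1), I would first remove the $x$-dependence of $N$ by the exhaustion $Z = \bigcup_N Z_{r,N}$, where $Z_{r,N}$ is the set of $x \in Z$ for which the upper bound holds for every $|w| \geq N$. The sets are nested with union $Z$, so by continuity of $\mu$ there is $N_0$ with $\mu(Z_{r,N_0}) \geq \mu(Z)/2$. For any admissible cover $\{B_{w_i}(x_i,r)\}_i$ of $Z_{r,N_0}$ with $|w_i| \geq N_0$ and centers in $Z_{r,N_0}$, the upper bound and $\sigma$-subadditivity of $\mu$ give
\[
\sum_i \exp(-s'|w_i| + S_{w_i}\Phi(x_i)) \geq \sum_i \mu(B_{w_i}(x_i,r)) \geq \mu(Z_{r,N_0}) \geq \mu(Z)/2 > 0,
\]
so the C-P pressure sum at parameter $s'$ stays bounded below by $\mu(Z)/2$ for every $N \geq N_0$, yielding $P_Z(G,\Phi) \geq s'$; letting $s' \nearrow s$ finishes. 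Covers whose centers lie outside $Z_{r,N_0}$ are treated by replacing each offending ball with a slightly enlarged one centered at a nearby point of $Z_{r,N_0}$, absorbing the change in $S_w\Phi$ via uniform continuity of the $\varphi_i$.

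For part (2), fix $t > s$ and $s' \in (s,t)$. For each $x \in Z$ and each $N$, pick a good Bowen ball $B_{w_x}(x,r)$ with $|w_x| \geq N$ and $\mu(B_{w_x}(x,r)) > \exp(S_{w_x}\Phi(x) - s'|w_x|)$. The family $\{B_{w_x}(x,r)\}_{x\in Z}$ covers $Z$; a Vitali/Besicovitch-type selection (using $B_w(x,r) \subseteq B(x,r)$ in the ambient metric and greedy ordering by word length) extracts a countable subfamily $\{B_{w_i}(x_i,r)\}$ of uniformly bounded multiplicity $K = K(X)$ still covering $Z$. Then
\[
\sum_i \exp(-t|w_i| + S_{w_i}\Phi(x_i)) < \sum_i e^{-(t-s')|w_i|} \mu(B_{w_i}(x_i,r)) \leq K\mu(X) e^{-(t-s')N},
\]
which tends to $0$ as $N \to \infty$, so $P_Z(G,\Phi) \leq t$ for every $t > s$; letting $t \searrow s$ gives the bound.

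The main obstacle is the Vitali-type selection in part (2): Bowen balls of different words $w$ are defined via different metrics $d_w$ on $X$, so the classical Besicovitch covering theorem in the ambient metric does not apply verbatim to the mixed-word family $\{B_{w_x}(x,r)\}$. The workaround I envisage is to exploit the inclusion $B_w(x,r) \subseteq B(x,r)$ and to order the family by $|w_x|$ (shortest words first, hence the largest ambient balls), running a greedy Besicovitch selection in the ambient metric so that ambient multiplicity controls the multiplicity of the chosen Bowen balls. A secondary technical point, shared by both parts, is reconciling the center value $S_w\Phi(x)$ appearing in the hypotheses with $\sup_{B_w(x,r)} S_w\Phi$ which may appear in the C-P pressure sum; the discrepancy is bounded by $|w|\max_i \gamma(\varphi_i, r)$ with $\gamma$ the modulus of continuity, and the resulting multiplicative error is absorbed in the outer $\lim_{r \to 0}$ of the C-P construction.
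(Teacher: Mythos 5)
Your approach to part~(1) is in substance the paper's: exhaust $Z$ by sets $Z_{r,N}$ on which the pointwise measure bound is uniform, pick a set of positive measure, and for an arbitrary admissible cover of Bowen balls use $\sigma$-subadditivity of $\mu$ to bound the C-P sum from below, handling centers lying outside the good set by the triangle-inequality replacement $B_{w}(y,r/2)\subset B_{w}(x,r)$. One small correction to your technical remark: the paper runs the proof with the quantity $M'$ from Section~5.2, which already evaluates $S_{w}\Phi$ at the \emph{center} of the Bowen ball, so there is no $\sup$-versus-center discrepancy to reconcile; the only center adjustment needed is the one you already identified, absorbed via uniform continuity of the $\varphi_i$ when one lets $r\to 0$.

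Part~(2), however, has a genuine gap. You propose extracting from the family of good Bowen balls a countable subfamily of uniformly bounded multiplicity $K=K(X)$ that still covers $Z$ — that is, a Besicovitch-type covering theorem — and then bounding $\sum_i \mu(B_{w_i}(x_i,r))\le K\mu(X)$. No such constant $K(X)$ exists in an arbitrary compact metric space: the Besicovitch covering theorem depends on Euclidean/finite-dimensional structure and fails in general. Your proposed workaround (``greedy ordering by word length'' run in the ambient metric via $B_{w}(x,r)\subset B(x,r)$) does not rescue this, because the ambient balls $B(x,r)$ all have the same radius $r$, so there is neither a meaningful size ordering in the ambient metric nor any bound on ambient multiplicity. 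What the paper actually uses is a Vitali-type $5r$-covering lemma for Bowen balls (Lemma~7.1, quoted from Ju--Ma--Wang and Ma--Wen): it extracts a \emph{pairwise disjoint} subfamily $\{B_{w_i}(x_i,r)\}$ whose $3r$-enlargements $\{B_{w_i}(x_i,3r)\}$ still cover the set. Disjointness immediately gives $\sum_i \mu(B_{w_i}(x_i,r))\le 1$, and then $M'_w(Z_k,s+\varepsilon,\Phi,3r,N)\le 1$ for all $N$, so $P'_{Z_k}(G,\Phi,3r)\le s+\varepsilon$ directly — there is no need for the extra exponentially decaying factor $e^{-(t-s')N}$ or for a bounded-multiplicity constant. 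The Vitali $5r$-lemma holds in any metric space by a greedy/Zorn argument ordered by $|w|$, and is the correct replacement for the Besicovitch step in your sketch.
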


\begin{remark}
When $\varphi_{0}=\varphi_{1}=\cdots=\varphi_{m-1}=0$, it is consistent with the result in \cite{Ju}. Moreover, if $ m=1 $,  then the above theorem coincides with the main results that Ma and Wen proved in \cite{Mj} .
\end{remark}

Finally, the third result describes the relationship between the upper capacity topological pressure of a free semigroup action and the upper capacity topological pressure of a skew-product transformation.
Let $X$ be a compact metric space with metric $d$, suppose a free semigroup with $m$ generators acting on $X$, the generators are  continuous transformations $G=\{ f_0, f_1, \ldots, f_{m-1}\}$ of $X$ and $\varphi \in C(X,\mathbb{R})$. Let $F: \Sigma_m \times X \to \Sigma_m \times X$ be a skew-product transformation and $g: \Sigma_m \times X \to \mathbb{R}$ be defined by the formula $g(\omega, x) =c+\varphi(x)$, where $\Sigma_m$ denotes the two-side symbol space and $c$ is a constant.
$\overline{CP}_{ \Sigma_m \times Z}(F,g)$ denotes the upper capacity topological pressure of $F$ with respect to $g$ on $\Sigma_m \times Z$ (see \cite{Pesin}), $\overline{CP}_{Z}(G,\varphi)$ the upper capacity topological pressure of $G$ with respect to $\varphi$ on $Z$ (see Section 4), $P_{ \Sigma_m \times X}(F,g)$ the topological pressure of $F$ with respect to $g$ on $\Sigma_m \times X$ (see \cite{Walters}), $P_{X}(G,\varphi)$ the topological pressure defined by Lin et al in \cite{Lin},
$h_{\Sigma_m \times X}(F)$ the topological entropy (see \cite{Walters}) and $h_{X}(G)$ the topological entropy of $G=\{ f_{0},f_{1},\cdots,f_{m-1}\}$ (see \cite{Bufetov}). Then we have the following theorem:
\begin{theorem}\label{5.2t}
For any set $ Z \subset X $, we have
\begin{align*}
\overline{CP}_{ \Sigma_m \times Z}(F,g) = \log m + \overline{CP}_{Z}(G,\varphi)+c.
\end{align*}
\end{theorem}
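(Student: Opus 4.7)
The proof proceeds by separately establishing the upper and lower inequalities between $\overline{CP}_{\Sigma_m \times Z}(F,g)$ and $\log m + \overline{CP}_{Z}(G,\varphi) + c$, in each case exploiting the product-like structure of the skew-product $F(\omega,x) = (\sigma\omega, f_{\omega_0}(x))$ together with the identity
\begin{equation*}
S_n^{F} g(\omega,x) \;=\; nc \;+\; S_{\omega|_{[0,n-1]}}\varphi(x),
\end{equation*}
which handles the additive constant $c$ immediately and reduces the problem to comparing the ``geometric'' quantities on the two sides.

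The key geometric step is to identify $F$-Bowen balls as essentially products of cylinders in $\Sigma_m$ with word-Bowen balls in $X$. Equipping $\Sigma_m \times X$ with the max metric, for fixed $\varepsilon > 0$ there is an integer $k = k(\varepsilon)$ (independent of $n$) such that, for all large $n$,
\begin{equation*}
B_n^{F}\bigl((\omega,x),\varepsilon\bigr) \;\subseteq\; [\omega_{-k},\ldots,\omega_{n-1}] \times B_{\omega|_{[0,n-1]}}(x,\varepsilon),
\end{equation*}
and conversely a comparable product lies inside $B_n^{F}((\omega,x),\varepsilon/2)$. The number of prescribed coordinates in the $\Sigma_m$-factor is $n+k$ rather than $n$, but the extra $m^{k}$ factor is absorbed by $\tfrac{1}{n}\log$ as $n\to\infty$, so the relevant count is exactly $m^n$ words of length $n$ intertwined with $(w,\varepsilon)$-Bowen balls in $X$.

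For the upper bound I would take, for each $w$ with $|w|=n$, a near-optimal weighted cover of $Z$ by $(w,\varepsilon)$-Bowen balls; taking the product with the cylinders $[w]$ produces a cover of $\Sigma_m \times Z$ by $F$-Bowen balls whose total weight is bounded by $e^{nc}\sum_{|w|=n}(\text{weighted word-cover sum})$. Estimating the word-sum by $m^n$ times its maximum and passing to $\tfrac{1}{n}\log$ and $\varepsilon\to 0$ yields the $\le$ inequality. For the lower bound I would reverse this construction: choose, for each $|w|=n$, a near-maximal weighted $(w,\varepsilon)$-separated set $E_w \subset Z$, pick one representative $\omega^{(w)}\in[w]$, and form $\bigcup_{|w|=n}\{\omega^{(w)}\}\times E_w$; this is $(n,\varepsilon)$-separated in $\Sigma_m \times Z$, and its total weight equals $e^{nc}\sum_{|w|=n}(\text{word weight})$, which now automatically produces the full $\log m$ contribution after $\tfrac{1}{n}\log$.

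The main obstacle is ensuring that the semigroup capacity pressure $\overline{CP}_{Z}(G,\varphi)$ defined in Section~4 and the Pesin-style capacity pressure $\overline{CP}_{\Sigma_m\times Z}(F,g)$ are being compared through compatible covering/separating formalisms, so that the passage from $\max_{|w|=n}$ to $\sum_{|w|=n}$ (and back) really accounts for \emph{exactly} the $\log m$ term, without hidden factors. Concretely, one needs
\begin{equation*}
\limsup_{n\to\infty}\tfrac{1}{n}\log \sum_{|w|=n} A_{w,\varepsilon} \;=\; \log m \;+\; \limsup_{n\to\infty}\tfrac{1}{n}\log \max_{|w|=n} A_{w,\varepsilon}
\end{equation*}
for the weight families arising on both sides, and one must verify that the cylinder correction $m^{k(\varepsilon)}$ contributes nothing as $\varepsilon\to 0$. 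Once those bookkeeping points are nailed down, the two inequalities combine into the claimed equality.
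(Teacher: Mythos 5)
Your proposal is correct and matches the paper's approach: the paper re-expresses $\overline{CP}_{\Sigma_m\times Z}(F,g)$ and $\overline{CP}_Z(G,\varphi)$ as $\lim_{\varepsilon\to 0}\limsup_n\frac{1}{n}\log$ of spanning/separated counting quantities, and then cites Lemmas 8.1 and 8.2 (proved as in Lin et al.), which assert precisely the product-structure comparison you sketch, namely $P(\Sigma_m\times Z,F,g,\varepsilon,n)\ge e^{nc}m^n P(Z,G,\varphi,\varepsilon,n)$ and $Q(\Sigma_m\times Z,F,g,\varepsilon,n)\le K(\varepsilon)e^{nc}m^n Q(Z,G,\varphi,\varepsilon,n)$. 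The bookkeeping worry you flag about $\max_{|w|=n}$ versus $\sum_{|w|=n}$ is moot, since the paper's definitions already average over words, $Q(Z,G,\varphi,\varepsilon,n)=m^{-n}\sum_{|w|=n}Q_w(Z,G,\varphi,\varepsilon,n)$, so the $\log m$ term arises as the exact identity $\tfrac1n\log m^n=\log m$ and the cylinder correction $K(\varepsilon)$ vanishes under $\tfrac1n\log$ as $n\to\infty$.
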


\begin{remark}
(1) If $g(\omega, x) =\varphi(x)\equiv 0$, it is $\overline{Ch}_{ \Sigma_m \times Z}(F) = \log m + \overline{Ch}_{Z}(G)$, which is consistent with the result in \cite{Ju}. When $Z=X$, we have $\overline{CP}_{ \Sigma_m \times X}(F,g)= P_{ \Sigma_m \times X}(F,g),~\overline{CP}_{X}(G,\varphi)= P_{X}(G,\varphi)$, then we can get $P_{ \Sigma_m \times X}(F,g)=\log m + P_{X}(G,\varphi)+c$, which has been proved by Lin et al in \cite{Lin}. Further, if $g(\omega, x) =\varphi(x)\equiv 0$, it is easy to get $h_{\Sigma_m \times X}(F)=\log m +h_{X}(G)$, which has been proved by Bufetov in \cite{Bufetov}.\\
(2) This result also retains for one-side symbol system.
\end{remark}

\section{\emph{Preliminaries}}

\subsection{Carath\'{e}odory-Pesin structure}

Let $X$ and $\mathcal{\mathcal{S}}$ be arbitrary sets and $\mathcal{F}=\{U_{s}: s\in \mathcal{S}\}$  a collection of subsets in $X$. From Pesin \cite{Pesin}, we assume that there exist two functions $\eta,\psi:\mathcal{\mathcal{S}}\rightarrow{\mathbb{R}^{+}}$ satisfying the following conditions:

(1) there exists $s_{0}\in \mathcal{\mathcal{S}}$ such that $U_{s_{0}}=\emptyset;$ if $U_{s}=\emptyset$ then  $\eta(s)=\psi(s)=0$; if $U_{s}\not=\emptyset$ then $\eta(s)>0$ and $\psi(s)>0$;

(2) for any $\delta>0$ one can find $\varepsilon>0$ such that $\eta(s)\le\delta$ for any $s\in \mathcal{\mathcal{S}}$ with $\psi(s)\le\varepsilon$;

(3) for any $\varepsilon>0$ there exists a finite or countable subcollection $\mathcal{G}\subset\mathcal{\mathcal{S}}$ which covers $X$ (i.e., $\cup_{s\in\mathcal{G}}U_{s}\supset X$) and $\psi(\mathcal{G})$:=sup\{$\psi(s):s\in \mathcal{\mathcal{G}}$\} $\le\varepsilon$.

Let $\xi:\mathcal{\mathcal{S}}\rightarrow{\mathbb{R}^{+}}$ be a function, we say that the set $\mathcal{\mathcal{S}}$, collection of subsets $\mathcal{F}$, and the set functions $\xi,\eta,\psi $ satisfying conditions (1), (2) and (3), form the Carath\'{e}odory-Pesin structure or C-P structure $\tau$ on $X$ and  write $\tau=(\mathcal{\mathcal{S}},\mathcal{F},\xi,\eta,\psi)$.

Given a subsetset $Z$ of $X$, $\alpha\in\mathbb{R}$, and $\varepsilon>0$, we define
\[
M(Z,\alpha,\varepsilon):= \inf\limits_{\mathcal{G}}\left\{\sum\limits_{s\in\mathcal{G}}\xi(s)\eta(s)^{\alpha}\right\},
\]
where the infimum is taken over all finite or countable subcollections $\mathcal{G} \subset \mathcal{\mathcal{S}}$ covering $Z$ with  $\psi(\mathcal{G})\le\varepsilon$. By Condition $(3)$ the function $M(Z,\alpha,\varepsilon)$ is correctly defined. It is non-decreasing as $\varepsilon$ decreases. Therefore, the following limit exists:
\[
m(Z,\alpha)=\lim\limits_{\varepsilon\rightarrow 0}M(Z,\alpha,\varepsilon).
\]

It was shown in \cite{Pesin} that there exists a critical value $\alpha_{C}\in[-\infty,\infty ]$ such that
\[m(Z,\alpha)=0, ~\alpha>\alpha_{C},\]
\[m(Z,\alpha)=\infty, ~\alpha<\alpha_{C}.\]
The number $\alpha_{C}$ is called the Carath\'{e}odory-Pesin dimension of the set $Z$.

Now we assume that the following condition holds:

$(3^{'})$ there exists $ \epsilon >0$ such that for any $0< \varepsilon\le\epsilon$ there exists a finite or countable subcollection $\mathcal{G} \subset \mathcal{\mathcal{S}}$ covering $X$ such that $\psi(s)=\varepsilon$ for any $s\in \mathcal{G}$.

Given $\alpha\in\mathbb{R}$ and $\varepsilon>0$, for any subset $Z\subset X$, define
\[
R(Z,\alpha,\varepsilon) = \inf\limits_{\mathcal{G}}\left\{\sum\limits_{s\in\mathcal{G}}\xi(s)\eta(s)^{\alpha} \right \},
\]
where the infimum is taken over all finite or countable subcollections $\mathcal{G} \subset\mathcal{S}$ covering $Z$ such that $\psi(s)=\varepsilon$ for any $ s\in \mathcal{G} $.
Set
\[\underline{r}(Z,\alpha)=\liminf_{\varepsilon\to 0}R(Z,\alpha,\varepsilon), ~\overline{r}(Z,\alpha)=\limsup_{\varepsilon\to 0}R(Z,\alpha,\varepsilon).\]

It was shown in \cite{Pesin} that there exist $\underline{\alpha}_{C},\overline{\alpha}_{C}\in \mathbb{R}$  such that
\[
 \underline{r}(Z,\alpha)=\infty, \alpha<\underline{\alpha}_{C},~ \underline{r}(Z,\alpha)=0, \alpha>\underline{\alpha}_{C},
\]
\[
\overline{r}(Z,\alpha)=\infty, \alpha<\overline{\alpha}_{C}, ~\overline{r}(Z,\alpha)=0, \alpha>\overline{\alpha}_{C}.
\]
The numbers $\underline{\alpha}_{C}$ and $\overline{\alpha}_{C}$  are called the lower and upper Carath\'{e}odory-Pesin capacities of the set $Z$ respectively.

For any $ \varepsilon >0 $ and subset $ Z \subset X $, put
\[
\Lambda(Z,\varepsilon):=\inf\limits_{\mathcal{G}}\left\{\sum\limits_{s\in\mathcal{G}}\xi(s) \right\},
\]
where the infimum is taken over all finite or countable subcollections $\mathcal{G} \subset\mathcal{S}$ covering $Z$ such that $\psi(s)=\varepsilon$ for any $ s\in \mathcal{G} $.

Assume that the function $\eta$ satisfies the following condition:

(4) $\eta(s_{1})=\eta(s_{2})$ for any $s_{1}, s_{2}\in \mathcal{\mathcal{S}}$ for which $\psi(s_{1})=\psi(s_{2}) $.

It was shown in \cite{Pesin} that if the function $\eta$ satisfies condition (4) then for any subset  $Z \subset X$,

\[
\underline{\alpha}_{C}=\liminf_{\varepsilon\rightarrow 0} \frac{\log{\Lambda(Z,\varepsilon)}}{\log{(1/ \eta(\varepsilon))}},~
\overline{\alpha}_{C}=\limsup_{\varepsilon\rightarrow 0} \frac{\log{\Lambda(Z,\varepsilon)}}{\log{(1/ \eta(\varepsilon))}}.
\]

\subsection{Words and sequences}

Let $F_m^+$ be the set of all finite words of symbols $0,1,\ldots,m-1$. For any $w \in F_m^+$, $ |w|$ stands for the length of $w$, that is, the digits of symbols in $w$. Denote $F_m^+(n)=\{w\in F_m^+: |w|=n, n \in \mathbb{N}\}$. Obviously, $F_m^+$ with respect to the law of composition is a free semigroup with $m$ generators. We write ${w}^{\prime}\leq{w}$ if there exists a word $w^{\prime\prime}\in{F_m^+}$ such that $w=w^{\prime\prime}w^{\prime}$. For $w=i_1\ldots i_k \in F_m^{+}$, denote $\overline{w}=i_k\ldots i_1$.

Let $\Sigma_m$ be the set of all two-side infinite sequences of symbols $0,1,\ldots,m-1$, that is,
\[
\Sigma_m=\{\omega=(\ldots, i_{-1}, i_0, i_1, \ldots) : i_j=0,1,\ldots,m-1~{\rm for~all~integer}~j\}.
\]
The metric on $\Sigma_{m}$ is defined by
\[
d'(\omega,\omega^{\prime})=1/{2^{k}},~{\rm where}~k=\inf\{\left|n\right| : i_{n}\neq{i_{n}^{\prime}}\}.
\]
Obviously, $\Sigma_m$ is compact with respect to this metric. The Bernoulli shift $\sigma_{m}:\Sigma_m\rightarrow\Sigma_m$ is a homeomorphism of $\Sigma_m$ given by the formula
\[
(\sigma_{m}\omega)_{k}=i_{k+1}.
\]

Suppose that $\omega\in\Sigma_{m},w\in{F_{m}^{+}},a,b$ are integers, and $a\leq{b}$. We write $\omega|_{[a,b]}=w$ if
$w=i_{a}i_{a+1} \ldots i_{b-1}i_{b}$.

Let $\Sigma_m^+$ be the set of all one-side infinite sequences of symbols $0,1,\ldots,m-1$:
\[
\Sigma_m^+=\{\omega=(i_0, i_1, \ldots) : i_j=0,1,\ldots,m-1~{\rm for~all~integer}~j\}.
\]

\subsection{Hausdorff dimension}
Let $ (X,d) $ be a metric space and $\mathcal{D}(Z,r)$ denote the collection of countable open covers $\{U_{i}\}_{i=1}^{\infty}$ of $Z$ for which $diam U_{i} < r$ for all $i$. Given a subset $ Z \subset X, t\geq 0$, define
\[
m_{H}(Z,t, r)=\inf_{\mathcal{D}(Z,r)}\left\{\sum\limits_{U_{i} \in \mathcal{D}(Z,r)} ({\rm diam}(U_{i}))^t \right\}.
\]
When $r$ decreases, $m_{H}(Z,t, r)$ increases. Therefore there exists the limit
\[
m_{H}(Z,t)=\lim\limits_{r\rightarrow 0} m_{H}(Z,t, r)
\]
which is called the \emph {t-dimensional Hausdorff measure} of $Z$.
The number
\[
dim_{H}(Z)=\inf\{t >0: m_{H}(Z,t)=0\}=\sup\{t>0 : m_{H}(Z,t)=\infty\}
\]
is called the \emph {Hausdorff dimension} of $Z$.

One may equivalently define Hausdorff dimension of using covers by open balls rather than arbitrary open sets. Let $\mathcal{D}^{b}(Z_{k},r)$ denotes the collection of countable open balls covers $\{B(x_{i},r_{i})\}_{i=1}^{\infty}$ of $Z_{k}$ for which $r_{i} < r$ for all $i$, and then define
\[
m^{b}_{H}(Z,t,r)=\inf_{\mathcal{D}^{b}(Z,r)}\left\{\sum\limits_{B(x_{i},r_{i}) \in \mathcal{D}^{b}(Z,r)} ({\rm diam} B(x_{i},r_{i}))^t\right\}.
\]
Finally, define $m^{b}_{H}(Z,t)$ and $dim_{H}^{b}Z$ by the same procedure as above. Then Climenhaga (\cite{Climenhaga1}) showed that $dim_{H}^{b}Z=dim_{H}Z$.

\section{\emph{Topological pressure, lower and upper capacity topological pressure of a free semigroup and their properties}}

In this section, we introduce the definitions of topological pressure, lower and upper capacity topological pressure of a free semigroup action by using C-P structure and provide some properties of them.

\subsection{Topological pressure and lower and upper capacity topological pressure}

Let $X$ be a compact metric space with metric $d$, $f_0,f_1,\ldots,f_{m-1}$ continuous transformations from $X$ to itself. Suppose that a free semigroup with $m$ generators $G=\{f_0,f_1,\ldots,f_{m-1}\}$ acts on X. Given $\varphi_{0},\varphi_{1},\cdots,\varphi_{m-1} \in C(X,\mathbb{R})$, denote $\Phi=\{\varphi_{0},\varphi_{1},\cdots,\varphi_{m-1} \}$. Let $w=i_1i_2 \ldots i_n \in F_m^+$, where $i_j=0,1,\ldots,m-1$ for all $j=1,\ldots,n$ and $f_w=f_{i_1}\circ f_{i_2}\circ \ldots \circ f_{i_n}$. Obviously, $f_{ww'}=f_wf_{w'}$.
 For $w=i_{1}i_{2}\cdots i_{n} \in F_{m}^{+}$, denote
\[
S_{w}\Phi(x):=\varphi_{i_{1}}(x)+\varphi_{i_{2}}(f_{i_{1}}(x))+\cdots+\varphi_{ i_{n}}(f_{i_{n-1}i_{n-2}\cdots i_{1}}(x)).
\]

Considering a finite open cover $\mathcal{U}$ of $X$, let
\[
\mathcal{\mathcal{S}}_{n+1}(\mathcal{U}):=\{\textbf{U}=(U_{0},U_{1},\ldots,U_{n}):\textbf{U}\in \mathcal{U}^{n+1}\},
\]
where $\mathcal{U}^{n+1}=\prod\limits_{i=1}^{n+1}\mathcal{U}$ and $n \geq 0$. For any string $\textbf{U}\in \mathcal{\mathcal{S}}_{n+1}(\mathcal{U})$, define the length of $\textbf{U}$ as $m(\textbf{U}):=n+1$. We put $\mathcal{S}=\mathcal{S}(\mathcal{U})= \cup_{n\geq1} \mathcal{S}_{n}(\mathcal{U})$. For any $\omega=(i_{1},i_{2},\ldots,i_{n},\ldots)\in\Sigma_m^+$,  $n\geq1$, and a given string $\textbf{U}=(U_0,U_1,\ldots,U_{n}) \in \mathcal{S}_{n+1}(\mathcal{U})$, we associate the set
\[
 X_{\omega}(\textbf{U})=\{x\in X : x\in U_{0}, f_{i_{j}}\circ\ldots\circ f_{i_{1}}(x)\in U_j, j=1,2,\ldots,n\}.
 \]
If $w_{\textbf{U}}=\omega|_{[0,n-1]}=i_{1}i_{2}\cdots i_{n} \in F_m^+$, we also denote $ X_{\omega}(\textbf{U})$ by $X_{w_{\textbf{U}}}(\textbf{U})$ for the sake of convenience.
Define
\[
\mathcal{F}=\{X_{\omega}(\textbf{U}): \textbf{U}\in \mathcal{\mathcal{S}}(\mathcal{U})~{\rm and}~\omega \in \Sigma_m^+\},
\]
and three $\xi,\eta,\psi :\mathcal{S}\rightarrow\mathbb{R}^+$  as follows
\[
\xi(\textbf{U}) =\exp\left(\sup_{x \in X_{w_{\textbf{U}}}(\textbf{U})} S_{w_{\textbf{U}}}\Phi(x) \right),
\]
\[ ~\eta(\textbf{U})=\exp(-m(\textbf{U})), ~\psi(\textbf{U})=m(\textbf{U})^{-1}.
\]

It is easy to verify that the set $\mathcal{\mathcal{S}}$, collection of subsets $ \mathcal{F},$ and the functions $\xi, \eta ,$ and $\psi $ satisfy the conditions (1), (2) and (3) in section 3.1 and hence they determine a C-P structure $\tau =(\mathcal{\mathcal{S}},\mathcal{F},\xi,\eta,\psi)$ on $X$.

Given $w \in F_m^{+}, |w|=N, Z\subset X$ and $\alpha \in \mathbb{R}$, we define
\begin{align*}
M_{w}(Z,\alpha,\Phi,\mathcal{U},N):&=\inf\limits_{\mathcal{G}_{w}}\left\{\sum\limits_{\textbf{U}\in\mathcal{G}_{w}} \xi(\textbf{U}) \eta(\textbf{U})^\alpha\right\}\\
&=\inf\limits_{\mathcal{G}_{w}}\left\{\sum\limits_{\textbf{U}\in\mathcal{G}_{w}}\exp\left(-\alpha m(\textbf{U})+\sup_{x \in X_{w_{\textbf{U}}}(\textbf{U})} S_{w_{\textbf{U}}}\Phi(x)\right)\right\},
\end{align*}
where the infimum is taken over all finite or countable collections of strings $\mathcal{G}_{w} \subset \mathcal{\mathcal{S}}(\mathcal{U})$ such that $m(\textbf{U})\ge N+1$ for all $\textbf{U}\in \mathcal{G}_{w}$ and $ \mathcal{G}_{w}$ covers $Z$
(i.e. for any $\textbf{U}\in \mathcal{G}_{w}$, there is $ w_{\textbf{U}} \in F^+_m $ such that $  \overline{w}\leq \overline{w_{\textbf{U}}}$ and $ \bigcup\limits_{\textbf{U}\in \mathcal{G}_{w}}X_{w_{\textbf{U}}}(\textbf{U})\supset Z$).

Let
 \[
 M(Z,\alpha,\Phi,\mathcal{U},N)=\frac{1}{m^{N}}\sum\limits_{|w|=N}M_{w}(Z,\alpha,\Phi,\mathcal{U},N).
 \]

We can easily verify that the function $M(Z,\alpha,\Phi,\mathcal{U},N)$ is non-decreasing as $N$ increases.
Therefore there exists the limit
\[
m(Z,\alpha,\Phi,\mathcal{U})=\lim\limits_{N\to\infty}M(Z,\alpha,\Phi,\mathcal{U},N).
\]

Furthermore, given $ w \in F_m^+ $ and $ |w|=N $, by the Condition $(3^{'})$ in section 3.1, we can define
\begin{align*}
R_{w}(Z,\alpha,\Phi,\mathcal{U},N):&=\inf\limits_{\mathcal{G}_{w}}\left\{\sum\limits_{\textbf{U}\in\mathcal{G}_{w}}\exp\left(-\alpha (N+1)+\sup_{x \in X_{w}(\textbf{U})} S_{w}\Phi(x)\right)\right\}\\
&=\Lambda_{w}(Z,\Phi,\mathcal{U},N)\exp(-\alpha (N+1)),
\end{align*}
where $\Lambda_{w}(Z,\Phi,\mathcal{U},N)=\inf\limits_{\mathcal{G}_{w}}\left\{\sum\limits_{\textbf{U}\in\mathcal{G}_{w}}\exp\left(\sup\limits_{x \in X_{w}(\textbf{U})} S_{w}\Phi(x)\right)\right\},$
the infimum is taken over all finite or countable collections of strings  $\mathcal{G}_{w} \subset \mathcal{\mathcal{S}}(\mathcal{U})$ such that $m(\textbf{U})=N+1$ for all $\textbf{U}\in \mathcal{G}_{w}$ and $ \mathcal{G}_{w}$ covers $Z$ (i.e., for any $\textbf{U}\in \mathcal{G}_{w}$, there is $ w_{\textbf{U}} \in F^+_m $ such that $ w_{\textbf{U}}=w $ and $ \bigcup\limits_{\textbf{U}\in \mathcal{G}_{w}}X_{w_{\textbf{U}}}(\textbf{U})\supset Z$).

Let  \[R(Z,\alpha,\Phi,\mathcal{U},N)=\frac{1}{m^{N}}\sum\limits_{|w|=N}R_{w}(Z,\alpha,\Phi,\mathcal{U},N),\]
and
\[\Lambda(Z,\Phi,\mathcal{U},N)=\frac{1}{m^{N}}\sum\limits_{|w|=N}\Lambda_{w}(Z,\Phi,\mathcal{U},N).\]

It is easy to see that
\[
R(Z,\alpha,\Phi,\mathcal{U},N)=\Lambda(Z,\Phi,\mathcal{U},N)\exp(-\alpha(N+1)).
\]

Set
\[ \underline{r}(Z,\alpha,\Phi,\mathcal{U})=\liminf_{N\to\infty}R(Z,\alpha,\Phi,\mathcal{U},N),\]
\[ \overline{r}(Z,\alpha,\Phi,\mathcal{U})=\limsup_{N\to\infty}R(Z,\alpha,\Phi,\mathcal{U},N).\]

 The C-P structure $\tau$ generates the Carath\'{e}odory-Pesin dimension of $Z$ and the lower and upper Carath\'{e}odory-Pesin capacities of $Z$ with respect to $G$. We denote them by $P_{Z}(G,\Phi,\mathcal{U}), \underline{CP}_{Z}(G,\Phi,\mathcal{U})$ and $\overline{CP}_{Z}(G,\Phi,\mathcal{U})$ respectively. We have
\[
P_{Z}(G,\Phi,\mathcal{U})=\inf\{\alpha:m(Z,\alpha,\Phi,\mathcal{U})=0\}
=\sup\{\alpha:m(Z,\alpha,\Phi,\mathcal{U})=\infty\},
\]
\[
\underline{CP}_{Z}(G,\Phi,\mathcal{U})=\inf\{\alpha: \underline{r}(Z,\alpha,\Phi,\mathcal{U})=0\}=\sup\{\alpha: \underline{r}(Z,\alpha,\Phi,\mathcal{U})=\infty\},
\]
and
\[
\overline{CP}_{Z}(G,\Phi,\mathcal{U})=\inf\{\alpha:\overline{r}(Z,\alpha,\Phi,\mathcal{U})=0\}
=\sup\{\alpha:\overline{r}(Z,\alpha,\Phi,\mathcal{U})=\infty\}.
\]

\begin{theorem}\label{3.1t}
For any set $Z\subset X$, the following limits exist:

\[P_{Z}(G,\Phi)=\lim\limits_{|\mathcal{U}| \rightarrow 0}P_{Z}(G,\Phi,\mathcal{U}),\]

\[\underline{CP}_{Z}(G,\Phi)=\lim\limits_{|\mathcal{U}| \rightarrow 0}\underline{CP}_{Z}(G,\Phi,\mathcal{U}),\]

\[\overline{CP}_{Z}(G,\Phi)=\lim\limits_{|\mathcal{U}| \rightarrow 0}\overline{CP}_{Z}(G,\Phi,\mathcal{U}).\]

\begin{proof}
We use the analogous method as that of \cite{Pesin}. Let $\mathcal{V}$ be a finite open cover of $X $ with diameter smaller than the Lebesgue number of  $\mathcal{U}$.
Then each element $V\in \mathcal{V}$ is contained in some element $U(V ) \in \mathcal{U}$.
For any $\textbf{V}= (V_{0}, V_{1},\ldots, V_{k})\in \mathcal{\mathcal{S}}_{k+1}(\mathcal{V}) \subset \mathcal{\mathcal{S}}(\mathcal{V})$,
we associate the string $\textbf{U}(\textbf{V})=(U(V_{0}),U(V_{1}),\ldots,U(V_{k}))\in \mathcal{\mathcal{S}}_{k+1}(\mathcal{U}) \subset \mathcal{\mathcal{S}}(\mathcal{U})$. Let $w=i_{1}i_{2} \ldots i_{N} \in F_{m}^+.$
If $\mathcal{G}_{w}\subset \mathcal{\mathcal{S}}(\mathcal{V})$
covers $Z$, for each $\textbf{V} \in \mathcal{G}_{w}, m(\textbf{V})\geq N+1$ and there is $ w_{\textbf{V}} \in F^+_m$ such that $ \overline{w} \leq \overline{w_{\textbf{V}}}.$ We denote the word that corresponds to $ \textbf{U}(\textbf{V}) $ by $ w_{\textbf{U}(\textbf{V})}$ such that $w_{\textbf{U}(\textbf{V})}=w_{\textbf{V}} $, then $\textbf{U}(\mathcal{G}_{w}) = \{\textbf{U}(\textbf{V}) : \textbf{V} \in \mathcal{G}_{w}\} \subset \mathcal{\mathcal{S}}(\mathcal{U})$ also covers $Z$. Let
\[
\gamma=\gamma(\mathcal{U}):=\sup\{|\varphi_{j}(x)-\varphi_{j}(y)|: x,y \in U~for~some~U \in \mathcal{U}~and~j=0,1,\cdots,m-1\}.
\]
It follows that
\[
\sup_{x \in X_{w_{\textbf{U}(\textbf{V})}}(\textbf{U}(\textbf{V}))} S_{w_{\textbf{U}(\textbf{V})}}\Phi (x) \leq \sup_{y \in X_{w_{\textbf{V}}}(\textbf{V})} S_{w_{\textbf{V}}}\Phi(y)+\gamma m(\textbf{V}).
\]
Note that $m(\textbf{V})=m(\textbf{U}(\textbf{V}))$. Then for every  $\alpha>0$ and $N>0$. One can easily see that
\[
M_{w}(Z,\alpha,\Phi,\mathcal{U},N) \le  M_{w}(Z,\alpha-\gamma,\Phi,\mathcal{V},N).
\]
Then
\[
M(Z,\alpha,\Phi,\mathcal{U},N) \le M(Z,\alpha-\gamma,\Phi,\mathcal{V},N).
\]
Moreover,
\[
m(Z,\alpha,\Phi,\mathcal{U}) \leq  m(Z,\alpha-\gamma,\Phi,\mathcal{V}).
\]
This implies that
\[
P_{Z}(G,\Phi,\mathcal{U})-\gamma \leq P_{Z}(G,\Phi,\mathcal{V}).
\]
Since $X$ is compact, it has finite open covers of arbitrarily small diameter. Therefore,
\[
P_{Z}(G,\Phi,\mathcal{U})-\gamma \leq \liminf_{|\mathcal{V}| \rightarrow 0}P_{Z}(G,\Phi,\mathcal{V}).
\]
If ${|\mathcal{U}| \rightarrow 0}$, then $\gamma \rightarrow 0$ and hence
\[
 \limsup_{|\mathcal{U}| \rightarrow 0}P_{Z}(G,\Phi,\mathcal{U})  \leq \liminf_{|\mathcal{V}| \rightarrow 0}P_{Z}(G,\Phi,\mathcal{V}).
\]
This implies the existence of the first limit. The existence of the two other limits can be proved in a similar fashion.
\end{proof}
\end{theorem}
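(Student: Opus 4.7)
The plan is to show that $P_Z(G,\Phi,\mathcal{U})$ is, up to an error $\gamma=\gamma(\mathcal{U})$ that tends to $0$ with $|\mathcal{U}|$, monotone under refinement of the cover: once the estimate $P_Z(G,\Phi,\mathcal{U})-\gamma\le P_Z(G,\Phi,\mathcal{V})$ is established for every $\mathcal{V}$ finer than $\mathcal{U}$, a standard $\limsup\le\liminf$ squeeze forces the two one-sided limits at $|\mathcal{U}|\to 0$ to coincide, giving existence of all three limits.

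First I would compare strings over refined covers. Given $\mathcal{U}$ and $\mathcal{V}$ with $|\mathcal{V}|$ smaller than the Lebesgue number of $\mathcal{U}$, assign to each $V\in\mathcal{V}$ some $U(V)\in\mathcal{U}$ containing it, and extend this pointwise to strings via $\textbf{V}=(V_0,\ldots,V_k)\mapsto\textbf{U}(\textbf{V})=(U(V_0),\ldots,U(V_k))$. If I keep $w_{\textbf{U}(\textbf{V})}=w_{\textbf{V}}$, then $X_{w_{\textbf{V}}}(\textbf{V})\subset X_{w_{\textbf{U}(\textbf{V})}}(\textbf{U}(\textbf{V}))$, so any cover of $Z$ by $\mathcal{G}_w\subset\mathcal{S}(\mathcal{V})$ lifts to a cover of $Z$ by strings of the same length over $\mathcal{U}$ carrying the same word $w$.

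Next I would quantify the loss in the weight $\xi$. Setting
\[
\gamma=\sup\{|\varphi_j(x)-\varphi_j(y)|:x,y\in U\text{ for some }U\in\mathcal{U},\ 0\le j\le m-1\},
\]
each orbit iterate along a point of $X_{w_{\textbf{V}}}(\textbf{V})$ lies in a single $V_\ell\subset U(V_\ell)$, so the sup of $S_{w_{\textbf{U}(\textbf{V})}}\Phi$ on $X_{w_{\textbf{U}(\textbf{V})}}(\textbf{U}(\textbf{V}))$ exceeds the sup of $S_{w_{\textbf{V}}}\Phi$ on $X_{w_{\textbf{V}}}(\textbf{V})$ by at most $\gamma\,m(\textbf{V})$. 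In the $\xi\eta^\alpha$ product this extra factor $e^{\gamma m(\textbf{V})}$ combines with $\eta(\textbf{V})^\alpha=e^{-\alpha m(\textbf{V})}$ to reproduce $\eta(\textbf{V})^{\alpha-\gamma}$. Working word-by-word and then averaging $\frac{1}{m^N}\sum_{|w|=N}$ on both sides (the word $w$ is preserved, so the sums line up term-by-term) produces $M(Z,\alpha,\Phi,\mathcal{U},N)\le M(Z,\alpha-\gamma,\Phi,\mathcal{V},N)$; letting $N\to\infty$ and reading off the critical $\alpha$ delivers $P_Z(G,\Phi,\mathcal{U})-\gamma\le P_Z(G,\Phi,\mathcal{V})$.

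To close the argument I would invoke uniform continuity of each $\varphi_j$ on the compact space $X$ to conclude $\gamma(\mathcal{U})\to 0$ as $|\mathcal{U}|\to 0$, and then take $\limsup_{|\mathcal{U}|\to 0}$ on the left and $\liminf_{|\mathcal{V}|\to 0}$ on the right. The capacities are handled identically: the sums $R_w$ and $\Lambda_w$ obey the same shift estimate (now with length exactly $N+1$ rather than $\ge N+1$), which gives convergence of both $\underline{CP}_Z(G,\Phi,\mathcal{U})$ and $\overline{CP}_Z(G,\Phi,\mathcal{U})$. The main subtlety, more a matter of bookkeeping than of genuine difficulty, is ensuring that the word $w$ attached to $\textbf{U}(\textbf{V})$ matches the one attached to $\textbf{V}$, so that the per-word inequalities combine cleanly under the $\frac{1}{m^N}\sum_{|w|=N}$ average; with that in place, every remaining step is a direct transcription of the scalar C-P argument in Section~3.1.
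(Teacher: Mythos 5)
Your proposal is correct and follows essentially the same route as the paper: refine the cover, lift strings via $V\mapsto U(V)$ preserving the associated word, bound the change in $\sup S_w\Phi$ by $\gamma(\mathcal{U})\,m(\textbf{V})$, absorb the error into a shift $\alpha\mapsto\alpha-\gamma$ in the weighted sums, average over words of length $N$, and close with the $\limsup\le\liminf$ squeeze using uniform continuity of the $\varphi_j$.
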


The quantities $P_{Z}(G,\Phi)$, $\underline{CP}_{Z}(G,\Phi),$ and $\overline{CP}_{Z}(G,\Phi)$  are called the topological pressure and lower and upper capacity topological pressure of $G$ with respect to $\Phi$ on the set $Z$ respectively.

\begin{remark}
(1) It is easy to see that $P_{Z}(G,\Phi)\le \underline{CP}_{Z}(G,\Phi)\le \overline{CP}_{Z}(G,\Phi)$.

(2) If $\varphi_{i}(x)=0, i=0,1,\cdots,m-1$, we can obtain
$P_{Z}(G, 0)=h_{Z}(G)$, where $h_{Z}(G)$ is the topological entropy on $Z$ in \cite{Ju}. Similarly, the lower and upper capacity topological pressure is the corresponding the lower and upper capacity topological entropies in \cite{Ju}.

(3) When $m=1$, i.e., $G=\{f\}$  and $\Phi=\{\varphi\}$, we obtain $P_{Z}(G,\Phi)=P_{Z}(\varphi)$, $\underline{CP}_{Z}(G,\Phi)=\underline{CP}_{Z}(\varphi)$, $\overline{CP}_{Z}(G,\Phi)=\overline{CP}_{Z}(\varphi)$, for any set $Z \subset X$, where $P_{Z}(\varphi)$, $\underline{CP}_{Z}(\varphi)$ and $\overline{CP}_{Z}(\varphi)$ are denoted by respectively the topological pressure and lower and upper capacity topological pressure in \cite{Pesin}. That is to say, this definition agrees with Pesin's \cite{Pesin}. Moveover, if $Z=X$, then $P_{X}(f,\varphi)=\underline{CP}_{X}(f,\varphi)=\overline{CP}_{X}(f,\varphi)=P(f,\varphi)$, which is equivalent to the classical topological pressure in \cite{Walters}.
\end{remark}

\subsection{Properties of topological pressure and lower and upper capacity topological pressure}

Using the basic properties of the Carath\'{e}odory-Pesin  dimension \cite{Pesin} and definitions, we get the following basic properties of topological pressure and lower and upper capacity topological pressure of a free semigroup action.

\begin{proposition}\label{3.1p}
(1) $P_{\emptyset}(G,\Phi)\le 0.$

(2) $P_{Z_{1}}(G,\Phi)\le P_{Z_{2}}(G,\Phi) ~ if~ Z_{1}\subset Z_{2}$.

(3) $P_{Z}(G,\Phi)=\sup\limits_{i\ge1}P_{Z_{i}}(G,\Phi)~ where~Z=\cup_{i\ge1}Z_{i} ~ and~Z_{i}\subset X , i=1,2,\cdots $.
\end{proposition}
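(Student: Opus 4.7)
The strategy is to deduce all three items directly from the definitions of $M_w$, $M$, and $m$, mirroring the classical Carathéodory-Pesin dimension arguments of Pesin. In each case the property first transfers at the level of a single cover $\mathcal{U}$ and then survives passing to the limit $|\mathcal{U}| \to 0$, so I would argue at the level of $P_Z(G, \Phi, \mathcal{U})$ throughout.

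For (1), my plan is to observe that the empty collection vacuously covers $Z = \emptyset$ (the covering condition $\bigcup X_{w_{\textbf{U}}}(\textbf{U}) \supset \emptyset$ is automatic) and the empty sum equals $0$, so $M_w(\emptyset, \alpha, \Phi, \mathcal{U}, N) = 0$ for every $\alpha$, every $w$, and every $N$. Averaging over $|w|=N$ and passing to the limit in $N$ gives $m(\emptyset, \alpha, \Phi, \mathcal{U}) = 0$ for all $\alpha$, hence $P_{\emptyset}(G, \Phi, \mathcal{U}) \le 0$, and letting $|\mathcal{U}| \to 0$ yields (1). For (2), every covering family $\mathcal{G}_w$ of $Z_2$ is automatically a covering family of $Z_1 \subset Z_2$, so the infimum defining $M_w(Z_1, \alpha, \Phi, \mathcal{U}, N)$ is taken over a family larger than the one defining $M_w(Z_2, \alpha, \Phi, \mathcal{U}, N)$, giving the pointwise inequality $M_w(Z_1, \cdot) \le M_w(Z_2, \cdot)$. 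This inequality is preserved under averaging over $|w|=N$, under $N \to \infty$, and under $|\mathcal{U}| \to 0$, which yields the monotonicity of $P_Z$.

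For (3), the inequality $\sup_{i \ge 1} P_{Z_i}(G, \Phi) \le P_Z(G, \Phi)$ is immediate from (2). For the reverse inequality I would fix $\mathcal{U}$ and pick $\alpha > \sup_i P_{Z_i}(G, \Phi, \mathcal{U})$, so that $m(Z_i, \alpha, \Phi, \mathcal{U}) = 0$ for every $i$. Because $M(Z_i, \alpha, \Phi, \mathcal{U}, N)$ is non-decreasing in $N$ with limit $0$, it vanishes for every $N$, and hence $M_w(Z_i, \alpha, \Phi, \mathcal{U}, N) = 0$ for every $w$ with $|w| = N$. Given $\varepsilon > 0$ and a fixed $N$, I would choose, for each $i$ and each such $w$, a covering family $\mathcal{G}_w^{(i)}$ of $Z_i$ whose weighted sum is smaller than $\varepsilon/2^i$. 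The countable union $\mathcal{G}_w = \bigcup_i \mathcal{G}_w^{(i)}$ is then a valid covering family of $Z = \bigcup_i Z_i$ with total weight at most $\varepsilon$, producing $M_w(Z, \alpha, \Phi, \mathcal{U}, N) \le \varepsilon$. Averaging gives $M(Z, \alpha, \Phi, \mathcal{U}, N) \le \varepsilon$, and sending $N \to \infty$ and then $\varepsilon \to 0$ gives $m(Z, \alpha, \Phi, \mathcal{U}) = 0$. Therefore $P_Z(G, \Phi, \mathcal{U}) \le \alpha$; letting $\alpha \downarrow \sup_i P_{Z_i}(G, \Phi, \mathcal{U})$ and then $|\mathcal{U}| \to 0$ finishes the argument.

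The only mildly delicate point will be the combined-cover construction in (3): one must make the choices of $\mathcal{G}_w^{(i)}$ uniformly in $w$ with $|w|=N$, so that the averaging $\frac{1}{m^{N}} \sum_{|w|=N}$ still produces a bound that does not depend on $N$. This is routine because each $w$ is handled independently and the bound $\varepsilon/2^i$ does not depend on $w$, so the subadditivity argument familiar from the single-map Pesin setting carries over verbatim.
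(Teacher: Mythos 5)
Your arguments for (1) and (2) are correct and exactly what a proof from the definitions looks like. For (3) the combined-cover construction is also fine; the point you flag as "the only mildly delicate" one — uniformity of the choice of $\mathcal{G}_w^{(i)}$ in $w$ — is indeed harmless, because the bound $\varepsilon/2^i$ can be arranged for each $w$ independently, as you say.

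The genuine gap in (3) is elsewhere, at the very last step. After letting $\alpha \downarrow \sup_i P_{Z_i}(G,\Phi,\mathcal{U})$ you correctly obtain the cover-level inequality
\[
P_Z(G,\Phi,\mathcal{U}) \le \sup_{i\ge 1} P_{Z_i}(G,\Phi,\mathcal{U}),
\]
but then you take $|\mathcal{U}|\to 0$ on both sides as if the right side visibly converges to $\sup_i P_{Z_i}(G,\Phi)$. It does not do so automatically: you are swapping a limit over covers with a supremum over a countable index, and $\lim_{|\mathcal{U}|\to 0}\sup_i \le \sup_i \lim_{|\mathcal{U}|\to 0}$ is false for general double arrays. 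What rescues the step is the uniform-in-$Z$ estimate already implicit in the proof of Theorem~\ref{3.1t}: writing $\gamma(\mathcal{U})=\sup\{|\varphi_j(x)-\varphi_j(y)| : x,y\in U \text{ for some } U\in\mathcal{U},\ j=0,\dots,m-1\}$, one has $P_Z(G,\Phi,\mathcal{U})-\gamma(\mathcal{U}) \le P_Z(G,\Phi,\mathcal{V})$ whenever $|\mathcal{V}|$ is below the Lebesgue number of $\mathcal{U}$, and hence $P_{Z_i}(G,\Phi,\mathcal{U}) \le P_{Z_i}(G,\Phi) + \gamma(\mathcal{U})$ with $\gamma(\mathcal{U})$ independent of $i$. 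Taking the supremum over $i$ and then letting $|\mathcal{U}|\to 0$ (so $\gamma(\mathcal{U})\to 0$) then gives $P_Z(G,\Phi) \le \sup_i P_{Z_i}(G,\Phi)$. You should insert this uniformity observation explicitly rather than assert that the two limiting operations commute.
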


\begin{proposition}\label{3.2p}
(1) $\underline{CP}_{\emptyset}(G,\Phi)\le 0,~\overline{CP}_{\emptyset}(G,\Phi)\le 0.$

(2) $\underline{CP}_{Z_{1}}(G,\Phi)\le \underline{CP}_{Z_{2}}(G,\Phi) ~and~ \overline{CP}_{Z_{1}}(G,\Phi)\le  \overline{CP}_{Z_{2}}(G,\Phi) ~if~ Z_{1}\subset Z_{2}$.

(3) $\underline{CP}_{Z}(G,\Phi) \ge \sup\limits_{i\ge1}\underline{CP}_{Z_{i}}(G,\Phi)$ and $\overline{CP}_{Z}(G,\Phi) \ge \sup\limits_{i\ge1}\overline{CP}_{Z_{i}}(G,\Phi)$,
where $Z=\cup_{i\ge1}Z_{i} ~and~Z_{i}\subset X , i=1,2,\cdots $.

(4) If $g:X \rightarrow X$ is a homeomorphism which commutes with G (i.e., $f_{i}\circ g=g \circ f_{i}$,
for all $f_{i}\in \{f_0,f_1,\ldots,f_{m-1}\}$ ), then
\[
P_{Z}(G,\Phi)=P_{g(Z)}(G,\Phi \circ g^{-1});
\]
\[
\underline{CP}_{Z}(G,\Phi)=\underline{CP}_{g(Z)}(G,\Phi \circ g^{-1});
\]
\[
\overline{CP}_{Z}(G,\Phi)=\overline{CP}_{g(Z)}(G,\Phi \circ g^{-1});
\]
where $\Phi \circ g^{-1}=\{\varphi_{0}\circ g^{-1},\varphi_{1}\circ g^{-1},\cdots,\varphi_{m-1} \circ g^{-1}\}$.
\end{proposition}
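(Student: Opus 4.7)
The plan is to handle parts (1)--(3) via the abstract properties of the Carath\'{e}odory--Pesin capacities established in \cite{Pesin}, applied to our C-P structure $\tau=(\mathcal{S},\mathcal{F},\xi,\eta,\psi)$, and to handle (4) by transporting covers through $g$. For (1)--(3) one writes the relevant quantities $M(Z,\alpha,\Phi,\mathcal{U},N)$, $\underline{r}(Z,\alpha,\Phi,\mathcal{U})$, $\overline{r}(Z,\alpha,\Phi,\mathcal{U})$ as infima over admissible covers and observes that for $Z_1\subset Z_2$ every admissible cover of $Z_2$ is an admissible cover of $Z_1$, so monotonicity is automatic; the emptyset bound is equally immediate since any single short string already covers $\emptyset$. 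The inequality (rather than equality) in (3) for $\underline{CP}$ and $\overline{CP}$ is a general feature of capacity-type quantities: a cover of a single $Z_i$ need not cover $\cup_i Z_i$, so only $\sup_i\underline{CP}_{Z_i}(G,\Phi)\le\underline{CP}_Z(G,\Phi)$ is available, in contrast to the equality one has for $P_Z$ in Proposition \ref{3.1p}(3).

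For (4), I would start by fixing a finite open cover $\mathcal{U}$ of $X$ and setting $g(\mathcal{U}):=\{g(U):U\in\mathcal{U}\}$, which is again a finite open cover because $g$ is a homeomorphism of the compact space $X$; moreover $|g(\mathcal{U})|\to 0$ iff $|\mathcal{U}|\to 0$ since both $g$ and $g^{-1}$ are uniformly continuous on $X$. For a string $\mathbf{U}=(U_0,\ldots,U_n)\in\mathcal{S}_{n+1}(\mathcal{U})$ I define $g(\mathbf{U}):=(g(U_0),\ldots,g(U_n))\in\mathcal{S}_{n+1}(g(\mathcal{U}))$, which is a length-preserving bijection $\mathcal{S}(\mathcal{U})\leftrightarrow\mathcal{S}(g(\mathcal{U}))$; in particular $\eta$ and $\psi$ are preserved.

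The core step is the pair of identities
\[
g\bigl(X_{w_{\mathbf{U}}}(\mathbf{U})\bigr)=X_{w_{\mathbf{U}}}\bigl(g(\mathbf{U})\bigr),\qquad S_{w_{\mathbf{U}}}(\Phi\circ g^{-1})\bigl(g(x)\bigr)=S_{w_{\mathbf{U}}}\Phi(x).
\]
Both come straight out of the commutation hypothesis $f_i\circ g=g\circ f_i$, which extends inductively to $f_w\circ g=g\circ f_w$ for every $w\in F_m^+$: the first identity is then a direct unwinding of the definition of $X_{w_{\mathbf{U}}}(\mathbf{U})$, and the second follows because each term $\varphi_{i_j}\circ g^{-1}$ evaluated at $f_{i_{j-1}\cdots i_1}(g(x))=g(f_{i_{j-1}\cdots i_1}(x))$ equals $\varphi_{i_j}(f_{i_{j-1}\cdots i_1}(x))$. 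Consequently $\mathbf{U}\mapsto g(\mathbf{U})$ carries covers of $Z$ onto covers of $g(Z)$, preserves the admissibility constraints on the word lengths, and preserves the weight $\xi$.

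Putting these together, $M_w$, $M$, $R_w$, $R$, $\Lambda$, $\underline{r}$ and $\overline{r}$ for $(Z,\Phi,\mathcal{U})$ coincide with their counterparts for $(g(Z),\Phi\circ g^{-1},g(\mathcal{U}))$, giving $P_Z(G,\Phi,\mathcal{U})=P_{g(Z)}(G,\Phi\circ g^{-1},g(\mathcal{U}))$ and the analogous equalities for the two capacities. Letting $|\mathcal{U}|\to 0$ (equivalently $|g(\mathcal{U})|\to 0$) and invoking Theorem \ref{3.1t} concludes (4). The only genuinely delicate step is the identity $g(X_{w_{\mathbf{U}}}(\mathbf{U}))=X_{w_{\mathbf{U}}}(g(\mathbf{U}))$; without the commutation of $g$ with each $f_i$ this would fail, because different generators would then act on the images of the coordinates of $\mathbf{U}$ in an uncontrolled way. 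Once that is in place the rest is a transparent change of variables.
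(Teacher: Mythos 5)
Your proof is correct and follows the same route the paper intends: the paper itself gives no written proof for Proposition \ref{3.2p}, stating only that it follows from the basic properties of the Carath\'eodory--Pesin capacities in Pesin's book together with the definitions, and that is precisely what you carry out. Parts (1)--(3) are immediate from the monotonicity of the infimum over admissible covers (with (3) being a direct consequence of (2)), and your treatment of (4) is the expected change-of-variables argument: the commutation $f_i\circ g=g\circ f_i$ gives $f_w\circ g=g\circ f_w$ for all $w\in F_m^+$, which in turn yields $g\bigl(X_{w_{\mathbf U}}(\mathbf U)\bigr)=X_{w_{\mathbf U}}\bigl(g(\mathbf U)\bigr)$ and $S_{w_{\mathbf U}}(\Phi\circ g^{-1})\circ g=S_{w_{\mathbf U}}\Phi$, so the bijection $\mathbf U\mapsto g(\mathbf U)$ carries covers of $Z$ at level $N$ to covers of $g(Z)$ at level $N$ while preserving $\xi$, $\eta$, $\psi$; passing $|\mathcal U|\to 0$ (equivalently $|g(\mathcal U)|\to 0$, since $g$ and $g^{-1}$ are uniformly continuous on the compact $X$) then gives all three equalities.
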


Obviously, the function $\eta$ and $\psi$  satisfy Condition (4) in section 3.1. Therefore, similar to  the Theorem 2.2 in \cite{Pesin}, we obtain the following lemma.

\begin{lemma}\label{3.2l}
For any open cover $\mathcal{U}$ of $X$ and any set $Z\subset X$, there exist the limits
\[
\underline{CP}_{Z}(G,\Phi,\mathcal{U})=\liminf_{N\rightarrow \infty}\frac{\log\Lambda(Z,\Phi,\mathcal{U},N)}{N},
\]
\[
\overline{CP}_{Z}(G,\Phi,\mathcal{U})=\limsup_{N\rightarrow \infty}\frac{\log\Lambda(Z,\Phi,\mathcal{U},N)}{N}.
\]
\end{lemma}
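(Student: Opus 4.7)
The plan is to extract the critical value of $\alpha$ from the identity
\[
R(Z,\alpha,\Phi,\mathcal{U},N) = \Lambda(Z,\Phi,\mathcal{U},N)\exp\bigl(-\alpha(N+1)\bigr)
\]
already recorded in the excerpt, and to read off the stated liminf/limsup formulas from the definitions $\underline{CP}_Z(G,\Phi,\mathcal{U})=\inf\{\alpha:\underline{r}(Z,\alpha,\Phi,\mathcal{U})=0\}$ and $\overline{CP}_Z(G,\Phi,\mathcal{U})=\inf\{\alpha:\overline{r}(Z,\alpha,\Phi,\mathcal{U})=0\}$. The approach mirrors Pesin's Theorem~2.2; the compatibility Condition~(4) of Section~3.1 is automatic here since both $\eta(\textbf{U})=e^{-m(\textbf{U})}$ and $\psi(\textbf{U})=1/m(\textbf{U})$ depend only on $m(\textbf{U})$.

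Concretely, write $a_N := \log\Lambda(Z,\Phi,\mathcal{U},N)$ and set $\underline{L}:=\liminf_{N\to\infty} a_N/N$. I would analyse
\[
\log R(Z,\alpha,\Phi,\mathcal{U},N) = a_N - \alpha(N+1)
\]
in two cases. If $\alpha>\underline{L}$, pick $\alpha'\in(\underline{L},\alpha)$; along a subsequence $N_k$ realising the liminf, $a_{N_k}\le\alpha' N_k$ for $k$ large, so $a_{N_k}-\alpha(N_k+1)\to-\infty$, whence $\underline{r}(Z,\alpha,\Phi,\mathcal{U})=0$. If $\alpha<\underline{L}$, pick $\alpha'\in(\alpha,\underline{L})$; then $a_N\ge\alpha' N$ for all $N$ sufficiently large, so $a_N-\alpha(N+1)\to+\infty$ and $\underline{r}(Z,\alpha,\Phi,\mathcal{U})=+\infty$. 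Taking the infimum over such $\alpha$ in the first case and the supremum in the second yields $\underline{CP}_Z(G,\Phi,\mathcal{U})=\underline{L}$, which is the first equality; the discrepancy between $N+1$ and $N$ in the denominator is harmless since $(N+1)/N\to 1$.

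The second equality follows from the same dichotomy with $\limsup$ in place of $\liminf$: for $\alpha>\overline{L}:=\limsup_{N\to\infty} a_N/N$ one has $a_N-\alpha(N+1)\to-\infty$ eventually, forcing $\overline{r}(Z,\alpha,\Phi,\mathcal{U})=0$, and for $\alpha<\overline{L}$ a subsequence forces $\overline{r}(Z,\alpha,\Phi,\mathcal{U})=+\infty$, so $\overline{CP}_Z(G,\Phi,\mathcal{U})=\overline{L}$. The argument is essentially routine and I do not foresee a real obstacle; the one delicate point to record explicitly is the degenerate case in which $\Lambda(Z,\Phi,\mathcal{U},N)=0$ for all large $N$ (for instance when $Z=\emptyset$), where under the convention $\log 0 = -\infty$ both sides of each identity equal $-\infty$ and the formulas remain valid.
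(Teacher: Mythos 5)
Your proof is correct and follows essentially the same line as the paper's (which likewise mirrors Pesin's Theorem 2.2): you identify the critical exponent by analysing $\log R(Z,\alpha,\Phi,\mathcal{U},N)=\log\Lambda(Z,\Phi,\mathcal{U},N)-\alpha(N+1)$ and checking the dichotomy $\underline{r}=0$ versus $\underline{r}=\infty$ on either side of $\liminf_N\log\Lambda/N$ (and the analogous $\limsup$ dichotomy), which is exactly the mechanism the paper uses with its $\alpha\pm\gamma$ bookkeeping. The $N$ versus $N+1$ remark and the degenerate $\Lambda\equiv 0$ case are minor points the paper leaves implicit, but your handling of them is fine.
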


\begin{proof}
We will prove the first equality; the second one can be proved in a similar fashion. Put
\[
\alpha=\underline{CP}_{Z}(G,\Phi,\mathcal{U}), ~\beta=\liminf_{N\rightarrow \infty}\frac{\log\Lambda(Z,\Phi,\mathcal{U},N)}{N}.
\]
Given $\gamma>0,$ choose a sequence $N_{i}\rightarrow \infty $ such that
\[0= \underline{r}(Z,\alpha+\gamma,\Phi,\mathcal{U})=\lim\limits_{i \rightarrow \infty}R(Z,\alpha+\gamma,\Phi,\mathcal{U},N_{i}).\]
It follows that $R(Z,\alpha+\gamma,\Phi,\mathcal{U},N_{i})\le1$ for all sufficiently large $i$. Therefore, for such $i$
\begin{equation}
\Lambda(Z,\Phi,\mathcal{U},N_{i})\exp(-(\alpha+\gamma)(N_{i}+1))\le1
\end{equation}
Moreover,
\[
\alpha+\gamma\ge\frac{\log\Lambda(Z,\Phi,\mathcal{U},N_{i})}{N_{i}+1}.
\]
Therefore,
\[
\alpha+\gamma\ge \liminf_{N\rightarrow \infty}\frac{\log\Lambda(Z,\Phi,\mathcal{U},N)}{N}.
\]
Hence,
\begin{equation}
\alpha\ge\beta-\gamma.
\end{equation}
Let us now choose a sequence $N_{i}^{'} $ such that
\[\beta=\lim\limits_{i \rightarrow\infty}\frac{\log\Lambda(Z,\Phi,\mathcal{U},N_{i}^{'})}{N_{i}^{'}}.\]
We have that
\[
\lim\limits_{i \rightarrow\infty}R(Z,\alpha-\gamma,\Phi,\mathcal{U},N_{i}^{'})\ge\underline{r}(Z,\alpha-\gamma,\Phi,\mathcal{U})
=\infty.
\]
This implies that $R(Z,\alpha-\gamma,\Phi,\mathcal{U},N_{i}^{'})\ge1$ for all sufficiently large $i$. Therefore, for such $i$
\[
\Lambda(Z,\Phi,\mathcal{U},N_{i}^{'})\exp(-(\alpha-\gamma)(N_{i}^{'}+1))\ge1
\]
and hence
\[
\alpha-\gamma\le\frac{\log\Lambda(Z,\Phi,\mathcal{U},N_{i}^{'})}{N_{i}^{'}+1}.
\]
Taking the limit as $i \rightarrow\infty$ we obtain that
\[\alpha-\gamma\le\liminf_{N\rightarrow \infty}\frac{\log\Lambda(Z,\Phi,\mathcal{U},N)}{N}=\beta,\]
and consequently,
\begin{equation}
\alpha\le\beta+\gamma.
\end{equation}
Since $\gamma$ can be chosen arbitrarily small, the inequalities (4.2) and (4.3) imply that $\alpha=\beta.$
\end{proof}

\begin{remark}\label{4.2r}
By the Theorem \ref{3.1t} and Lemma \ref{3.2l}, we can obtain
\[
\underline{CP}_{Z}(G,\Phi)=\lim\limits_{|\mathcal{U}| \rightarrow 0}\liminf_{N\rightarrow \infty}\frac{\log\Lambda(Z,\Phi,\mathcal{U},N)}{N},
\]
\[
\overline{CP}_{Z}(G,\Phi)=\lim\limits_{|\mathcal{U}| \rightarrow 0}\limsup_{N\rightarrow \infty}\frac{\log\Lambda(Z,\Phi,\mathcal{U},N)}{N}.
\]
\end{remark}

\begin{theorem}\label{3.3t}
If $\Phi=\{\varphi_{0},\varphi_{1},\cdots, \varphi_{m-1}\}$ and $\Psi=\{\psi_{0},\psi_{1},\cdots, \psi_{m-1}\}$,  we have
\[
|P_{Z}(G,\Phi)-P_{Z}(G,\Psi)| \leq \max_{0 \leq i \leq m-1} \|\varphi_{i}-\psi_{i}\|,
\]
\[
|\underline{CP}_{Z}(G,\Phi)-\underline{CP}_{Z}(G,\Psi)| \leq \max_{0 \leq i \leq m-1} \|\varphi_{i}-\psi_{i}\|,
\]
\[
|\overline{CP}_{Z}(G,\Phi)-\overline{CP}_{Z}(G,\Psi)| \leq \max_{0 \leq i \leq m-1} \|\varphi_{i}-\psi_{i}\|,
\]
where $\|\cdot\|$ denotes the supremum norm in the space of continuous functions on X.
\end{theorem}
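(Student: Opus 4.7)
The plan is to bound the defining C-P weights of $G$ with respect to $\Phi$ by those with respect to $\Psi$, up to a uniform shift of the pressure parameter by $\gamma:=\max_{0\le i\le m-1}\|\varphi_i-\psi_i\|$. The algebraic heart of the argument is the pointwise orbit estimate
\begin{equation*}
|S_w\Phi(x)-S_w\Psi(x)|\;\le\;|w|\,\gamma,\qquad x\in X,\ w\in F_m^+,
\end{equation*}
which follows term by term from the definition of $S_w\Phi$.

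First I would insert this estimate into the Carath\'eodory weight $\xi(\mathbf{U})\eta(\mathbf{U})^{\alpha}$. For any string $\mathbf{U}$ the relation $|w_{\mathbf{U}}|\le m(\mathbf{U})$ yields
\begin{equation*}
\sup_{x\in X_{w_{\mathbf{U}}}(\mathbf{U})}S_{w_{\mathbf{U}}}\Phi(x)-\alpha\,m(\mathbf{U})\;\le\;\sup_{x\in X_{w_{\mathbf{U}}}(\mathbf{U})}S_{w_{\mathbf{U}}}\Psi(x)-(\alpha-\gamma)\,m(\mathbf{U}).
\end{equation*}
The admissible families of coverings $\mathcal{G}_w$ entering $M_w(Z,\cdot,\Phi,\mathcal{U},N)$ and $M_w(Z,\cdot,\Psi,\mathcal{U},N)$ coincide, so summing over $\mathcal{G}_w$ and averaging over $|w|=N$ gives $M(Z,\alpha,\Phi,\mathcal{U},N)\le M(Z,\alpha-\gamma,\Psi,\mathcal{U},N)$. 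Letting $N\to\infty$ yields $m(Z,\alpha,\Phi,\mathcal{U})\le m(Z,\alpha-\gamma,\Psi,\mathcal{U})$, and the characterization of $P_Z(G,\Phi,\mathcal{U})$ as the critical value of $\alpha\mapsto m(Z,\alpha,\Phi,\mathcal{U})$ then forces $P_Z(G,\Phi,\mathcal{U})\le P_Z(G,\Psi,\mathcal{U})+\gamma$. Interchanging $\Phi$ and $\Psi$ gives the reverse bound; passing to $|\mathcal{U}|\to 0$ via Theorem \ref{3.1t} produces the first inequality of the statement.

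For the two capacity versions I would argue through Lemma \ref{3.2l} and Remark \ref{4.2r}. The pointwise estimate applied to each word of length $N$ immediately gives $\Lambda_w(Z,\Phi,\mathcal{U},N)\le e^{N\gamma}\Lambda_w(Z,\Psi,\mathcal{U},N)$, and averaging over $|w|=N$ yields $\log\Lambda(Z,\Phi,\mathcal{U},N)\le\log\Lambda(Z,\Psi,\mathcal{U},N)+N\gamma$. Dividing by $N$ and taking $\liminf_{N\to\infty}$ (respectively, $\limsup_{N\to\infty}$) gives $\underline{CP}_Z(G,\Phi,\mathcal{U})\le\underline{CP}_Z(G,\Psi,\mathcal{U})+\gamma$ and the analogous upper-capacity inequality; swapping $\Phi\leftrightarrow\Psi$ and letting $|\mathcal{U}|\to 0$ completes the remaining two bounds.

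I do not expect a genuine obstacle; the argument is essentially the free-semigroup/C-P translation of the classical Walters-type Lipschitz estimate for topological pressure. The only items requiring a little care are the absorption of $|w_{\mathbf{U}}|\le m(\mathbf{U})$ into $\gamma\,m(\mathbf{U})$ (harmless, since the shift $\alpha\mapsto\alpha-\gamma$ in the pressure parameter is exactly what survives in the limit), and the observation that averaging $m^{-N}\sum_{|w|=N}$ preserves pointwise inequalities term by term, so the bounds descend cleanly from $M_w$ and $\Lambda_w$ to $M$ and $\Lambda$.
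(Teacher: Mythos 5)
Your proposal matches the paper's proof essentially step for step: the pointwise estimate $|S_w\Phi(x)-S_w\Psi(x)|\le|w|\,\gamma$ with $\gamma=\max_i\|\varphi_i-\psi_i\|$, the resulting shift $\alpha\mapsto\alpha-\gamma$ in the Carath\'eodory sums $M_w$ and $M$, the passage to $m(Z,\cdot,\Phi,\mathcal{U})$, the symmetric bound, and the limit $|\mathcal{U}|\to 0$ via Theorem~\ref{3.1t}. Your explicit treatment of the two capacity pressures through $\Lambda_w$ and Lemma~\ref{3.2l} is a tidy way to spell out what the paper dismisses as ``similar.''
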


\begin{proof}
 Notice that for any $w\in F_m^+$ and $ |w|=N $,
 \[
 \sup_{x \in X}|S_{w}\Phi(x)-S_{w}\Psi(x)| \leq N \max_{0 \leq i \leq m-1} \|\varphi_{i}-\psi_{i}\|.
 \]
 Then we get for any $w\in F_m^+$ and $ |w|\geq N $,
 \[
 \sup_{x \in X}S_{w}\Phi(x)-\sup_{x \in X}S_{w}\Psi(x) \leq \sup_{x \in X}|S_{w}\Phi(x)-S_{w}\Psi(x)| \leq |w| \max_{0 \leq i \leq m-1} \|\varphi_{i}-\psi_{i}\|.
 \]
Hence
\begin{align*}
 M_{w}(Z,\alpha,\Phi,\mathcal{U},N)
 \leq  M_{w}(Z,\alpha-\max_{0 \leq i \leq m-1}\|\varphi_{i}-\psi_{i}\|,\Psi,\mathcal{U},N),
 \end{align*}
 Similarly, we have
 \[
  M_{w}(Z,\alpha,\Phi,\mathcal{U},N) \geq M_{w}(Z,\alpha+\max_{0 \leq i \leq m-1}\|\varphi_{i}-\psi_{i}\|,\Psi,\mathcal{U},N) .
 \]
Therefore,
 \begin{align*}
 M(Z,\alpha+\max_{0 \leq i \leq m-1}\|\varphi_{i}-\psi_{i}\|,\Psi,\mathcal{U},N) &\leq M(Z,\alpha,\Phi,\mathcal{U},N)\\
 &\leq   M(Z,\alpha-\max_{0 \leq i \leq m-1}\|\varphi_{i}-\psi_{i}\|,\Psi,\mathcal{U},N).
 \end{align*}
Taking limit $N \rightarrow \infty$ yields
 \begin{align*}
  m(Z,\alpha+\max_{0 \leq i \leq m-1}\|\varphi_{i}-\psi_{i}\|,\Psi,\mathcal{U})&\leq m(Z,\alpha,\Phi,\mathcal{U})\\
 &\leq  m(Z,\alpha-\max_{0 \leq i \leq m-1}\|\varphi_{i}-\psi_{i}\|,\Psi,\mathcal{U}).
 \end{align*}
Thus
\[
P_{Z}(G,\Psi,\mathcal{U})-\max_{0 \leq i \leq m-1}\|\varphi_{i}-\psi_{i}\| \leq P_{Z}(G,\Phi,\mathcal{U}) \leq P_{Z}(G,\Psi,\mathcal{U})+\max_{0 \leq i \leq m-1}\|\varphi_{i}-\psi_{i}\|.
\]
Let $|\mathcal{U}|\rightarrow 0$, and we obtain
\[
P_{Z}(G,\Psi)-\max_{0 \leq i \leq m-1}\|\varphi_{i}-\psi_{i}\| \leq P_{Z}(G,\Phi) \leq P_{Z}(G,\Psi)+\max_{0 \leq i \leq m-1}\|\varphi_{i}-\psi_{i}\|,
\]
which establishes the first inequality. The proof of the two other inequalities is similar.
\end{proof}

For a free semigroup with $m$ generators acting on $X$, denoting the maps corresponding to the generators by $G=\{ f_0,f_1,\ldots,f_{m-1}\}$,
a set $Z\subset X$ is called $G$-invariant if $f_{i}^{-1}(Z) = Z$  for all $f_{i}\in G$.
For invariant sets, similar to the lower and upper capacity topological pressure of a single map \cite{Pesin}, we have the following theorems.

\begin{theorem}\label{3.4t}
For any $G$-invariant set $Z\subset X$, we have
\[
\underline{CP}_{Z}(G,\Phi)=  \overline{CP}_{Z}(G,\Phi).
\]
Moreover, for any open cover $\mathcal{U}$ of $X$, we have
\[
\underline{CP}_{Z}(\mathcal{U},G,\Phi) = \overline{CP}_{Z}(\mathcal{U},G,\Phi).
\]
\end{theorem}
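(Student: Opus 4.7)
The plan is to prove the cover-indexed equality $\underline{CP}_Z(G,\Phi,\mathcal{U}) = \overline{CP}_Z(G,\Phi,\mathcal{U})$ for every finite open cover $\mathcal{U}$ of $X$, and then deduce the unindexed statement by letting $|\mathcal{U}|\to 0$ and applying Theorem~\ref{3.1t}. By Lemma~\ref{3.2l} the lower and upper capacities with respect to $\mathcal{U}$ are respectively the $\liminf$ and $\limsup$ of $\frac{\log\Lambda(Z,\Phi,\mathcal{U},N)}{N}$, so it suffices to prove the full limit exists. I will get this from Fekete's lemma applied to the almost-subadditive inequality
\[
\log\Lambda(Z,\Phi,\mathcal{U},N+M+1) \le \log\Lambda(Z,\Phi,\mathcal{U},N) + \log\Lambda(Z,\Phi,\mathcal{U},M) + \log C,
\]
where $C = \frac{1}{m}\sum_{k=0}^{m-1} e^{\|\varphi_k\|}$.

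To establish this inequality I fix words $w_1,w_2$ with $|w_1|=N$, $|w_2|=M$ and a letter $k\in\{0,\ldots,m-1\}$, and pick near-optimal admissible covers $\mathcal{G}_{w_1}$, $\mathcal{G}_{w_2}$ realizing $\Lambda_{w_1}$ and $\Lambda_{w_2}$ to within any prescribed factor. For each pair of strings $\mathbf{U}=(U_0,\ldots,U_N)\in\mathcal{G}_{w_1}$ and $\mathbf{V}=(V_0,\ldots,V_M)\in\mathcal{G}_{w_2}$ I form the concatenation
\[
\mathbf{W}(\mathbf{U},\mathbf{V}) := (U_0,U_1,\ldots,U_N,V_0,V_1,\ldots,V_M)
\]
of length $N+M+2$, assigned to the word $w_1 k w_2$ of length $N+M+1$. $G$-invariance gives $f_i(Z)\subseteq Z$ for every generator, so for $x\in Z$ the point $y:=f_{\overline{w_1}}(x)$ and hence $f_k(y)$ both lie in $Z$; picking $\mathbf{U}$ so that $x\in X_{w_1}(\mathbf{U})$ and $\mathbf{V}$ so that $f_k(y)\in X_{w_2}(\mathbf{V})$ places $x$ in $X_{w_1 k w_2}(\mathbf{W}(\mathbf{U},\mathbf{V}))$, showing that $\mathcal{G}_{w_1 k w_2}:=\{\mathbf{W}(\mathbf{U},\mathbf{V})\}$ is an admissible cover of $Z$.

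For the weight estimate I use the decomposition
\[
S_{w_1 k w_2}\Phi(x) = S_{w_1}\Phi(x) + \varphi_k\bigl(f_{\overline{w_1}}(x)\bigr) + S_{w_2}\Phi\bigl(f_k(f_{\overline{w_1}}(x))\bigr),
\]
together with the inclusions $x\in X_{w_1}(\mathbf{U})$ and $f_k(y)\in X_{w_2}(\mathbf{V})$ and the uniform bound $\varphi_k\le\|\varphi_k\|$, obtaining $\Lambda_{w_1 k w_2}\le e^{\|\varphi_k\|}\Lambda_{w_1}\Lambda_{w_2}$ after summing and passing to the infimum. Summing over all words of length $N+M+1$ via the unique factorization $w=w_1 k w_2$ and dividing by $m^{N+M+1}$ produces the claimed product inequality. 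The sequence $d_N := \log\Lambda(Z,\Phi,\mathcal{U},N-1) + \log C$ is then subadditive on $N\ge 1$, and a routine Fekete argument forces $\frac{\log\Lambda(Z,\Phi,\mathcal{U},N)}{N}$ to converge; the equality for each $\mathcal{U}$ follows from Lemma~\ref{3.2l}, and Theorem~\ref{3.1t} then gives $\underline{CP}_Z(G,\Phi)=\overline{CP}_Z(G,\Phi)$.

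The principal obstacle is compatibility at the interface between the two blocks. In the single-map analogue one simply concatenates Bowen strings and the positions match, but here the point $f_{\overline{w_1}}(x)$ must simultaneously lie in $U_N$ and $V_0$, whose intersection generally does not belong to $\mathcal{U}$. Inserting an additional generator letter $k$ between $w_1$ and $w_2$ (enlarging the combined word by one and the combined string by one) lets each block keep its own last/first element, at the cost of a multiplicative factor $e^{\|\varphi_k\|}$ and a harmless ``$+1$'' shift, both of which are absorbed by Fekete's lemma.
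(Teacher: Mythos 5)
Your proposal is correct and takes essentially the same approach as the paper's proof: both concatenate strings $\mathbf{U},\mathbf{V}$ into a string of length $N+M+2$ assigned to the word $w_1 k w_2$, use $G$-invariance to show the concatenated family still covers $Z$, derive an almost-multiplicative inequality for $\Lambda(Z,\Phi,\mathcal{U},\cdot)$, and conclude via a Fekete-type subadditivity lemma together with Lemma~\ref{3.2l} and Theorem~\ref{3.1t}. (The only cosmetic difference is that the paper bounds the interface term by $c=\max_i e^{\|\varphi_i\|}$ rather than by your averaged constant $C=\frac{1}{m}\sum_k e^{\|\varphi_k\|}$, and the paper explicitly records the lower bound $\Lambda(Z,\Phi,\mathcal{U},p)\ge e^{-p\max_i\|\varphi_i\|}$ needed for its subadditivity Lemma~\ref{3.5l}, a hypothesis your appeal to ``a routine Fekete argument'' should also check.)
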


\begin{proof}
Let $Z\subset X$ be a $G$-invariant set.
For any $w^{(1)}, w^{(2)}\in F_m^+$ where $ |w^{(1)}|=p $ and $|w^{(2)}|=q$, we choose two collections of strings $\mathcal{G}_{w^{(1)}}\subset \mathcal{\mathcal{S}}_{p+1}(\mathcal{U})$ and $\mathcal{G}_{w^{(2)}}\subset \mathcal{\mathcal{S}}_{q+1}(\mathcal{U})$ which cover $Z$.
Supposing that $\textbf{U}=(U_{0},U_{1}\ldots,U_{p})\in \mathcal{G}_{w^{(1)}}$ and $\textbf{V}=(V_{0},V_{1}\ldots,V_{q})\in \mathcal{G}_{w^{(2)}}$,
we define
\begin{center}
$\textbf{U}\textbf{V}=(U_{0},U_{1},\ldots,U_{p},V_{0},V_{1},\ldots,V_{q}) $.
\end{center}
For a fixed $i\in \{0,1,\ldots,m-1\}$, we consider
\[
\mathcal{G}_{w}:=\{\textbf{U}\textbf{V}:  \textbf{U}\in \mathcal{G}_{w^{(1)}},  \textbf{V}\in \mathcal{G}_{w^{(2)}}\}\subset \mathcal{S}_{p+q+2}(\mathcal{U}),
\]
where $w=w^{(1)}iw^{(2)}$. Then
\[
X_{w}(\textbf{U}\textbf{V})=X_{w^{(1)}}(\textbf{U}) \cap (f_i \circ f_{\overline{w^{(1)}}})^{-1}(X_{w^{(2)}}(\textbf{V})),
\]
and $ m(\textbf{U}\textbf{V})=m(\textbf{U})+m(\textbf{V}).$
Since $Z$ is a $G$-invariant set, the collection of strings $\mathcal{G}_{\omega}$ also covers $Z$. By the definition of $\Lambda_{w}(Z,\Phi,\mathcal{U},p+q+1)$, we have
\begin{align*}
&\Lambda_{w}(Z,\Phi,\mathcal{U},p+q+1) \le \sum\limits_{\textbf{U}\textbf{V}\in\mathcal{G}_{w}}\exp \left(\sup_{x \in X_{w}(\textbf{U}\textbf{V})} S_{w}\Phi (x)\right)\\
&\le  c\times \left\{ \sum\limits_{\textbf{U}\in\mathcal{G}_{w^{(1)}}}\exp \left(\sup_{x \in X_{w^{(1)}}(\textbf{U})} S_{w^{(1)}}\Phi (x)\right)\right\}  \times  \left\{ \sum\limits_{\textbf{V}\in\mathcal{G}_{w^{(2)}}}\exp \left(\sup_{x \in X_{w^{(2)}}(\textbf{V})} S_{w^{(2)}}\Phi(x)\right)\right\},
\end{align*}
where $c=\max_{0\leq i \leq m-1}e^{\|\varphi_{i}\|}$, which implies
\[
\frac{1}{m}\sum\limits_{i=0}^{m-1}\Lambda_{w}(Z,\Phi,\mathcal{U},p+q+1)\le  c\times \Lambda_{w^{(1)}}(Z,\Phi,\mathcal{U},p)\times \Lambda_{w^{(2)}}(Z,\varphi,\mathcal{U},q).
\]
 It follows that
\[\Lambda(Z,\Phi,\mathcal{U},p+q+1)\le c\times \Lambda(Z,\Phi,\mathcal{U},p)\times \Lambda(Z,\Phi,\mathcal{U},q).\]
Let $a_{p}=\log\Lambda(Z,\Phi,\mathcal{U},p)$. Note that $\Lambda(Z,\Phi,\mathcal{U},p) \geq e^{-p\cdot \max_{0\leq i \leq m-1}\|\varphi_{i}\|}$. Therefore, $\inf\limits_{p>1}\frac{a_{p-1}}{p}>-\infty$. The desired result is now a direct consequence of the following Lemma \ref{3.5l} .
\end{proof}

\begin{lemma}\label{3.5l}
Let $\{a_{p}\},~p=1,2,\ldots$ be a sequence of numbers satisfying $\inf\limits_{p>1}\frac{a_{p-1}}{p}>-\infty$ and $a_{p+q+1}\le c'+a_{p}+a_{q}$ for all $p,q > 1$ where $c'>0$ is a constant. Then the $\lim\limits_{p\rightarrow\infty}\frac{a_{p}}{p}$ exists and coincides with $\inf\limits_{p >1}(\frac{a_{p-1}}{p}+\frac{c'}{p})$.
\end{lemma}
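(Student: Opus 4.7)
The plan is to reduce the statement to the classical Fekete subadditive lemma. Specifically, I would introduce the auxiliary sequence $b_n := a_{n-1} + c'$ for $n \geq 2$. The hypothesis $a_{p+q+1} \leq c' + a_p + a_q$, upon adding $c'$ to both sides and setting $n = p+1$, $m = q+1$, rearranges to
\[
b_{n+m} \;=\; a_{n+m-1} + c' \;\leq\; (a_{n-1} + c') + (a_{m-1} + c') \;=\; b_n + b_m
\]
for all $n, m \geq 2$, i.e.\ genuine subadditivity. Likewise, the hypothesis $\inf_{p > 1} a_{p-1}/p > -\infty$ translates directly to $\inf_{n \geq 2} b_n / n > -\infty$, so that $b_n/n$ is bounded below.

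Next I would invoke Fekete's subadditive lemma on $\{b_n\}_{n \geq 2}$, concluding that $\lim_{n \to \infty} b_n/n$ exists as a real number and equals $\inf_{n \geq 2} b_n/n$. If a self-contained derivation is preferred, the standard argument is to fix $p \geq 2$, write $n = kp + r$ with $0 \leq r < p$, iterate subadditivity to get $b_n \leq k b_p + b_r$ (with the convention $b_r$ is bounded for the finitely many residues, or handled by a separate bound), divide by $n$ and send $n \to \infty$ for fixed $p$, obtaining $\limsup_n b_n/n \leq b_p/p$, then take the infimum over $p$.

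To finish I would translate back. Since $b_n/n = a_{n-1}/n + c'/n$, the Fekete conclusion reads
\[
\lim_{n\to\infty}\frac{a_{n-1} + c'}{n} \;=\; \inf_{n \geq 2}\left(\frac{a_{n-1}}{n} + \frac{c'}{n}\right),
\]
and the substitution $p = n-1$ together with $(n-1)/n \to 1$ gives $\lim_{p \to \infty} a_p/p = \lim_{n \to \infty}(a_{n-1}+c')/n$. Combining these two identities yields the claimed formula
\[
\lim_{p\to\infty}\frac{a_p}{p} \;=\; \inf_{p > 1}\left(\frac{a_{p-1}}{p} + \frac{c'}{p}\right).
\]

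The argument contains no serious obstacle; it is essentially Fekete's lemma in disguise. The only real care point is the index bookkeeping: verifying that the $+1$ shift in $a_{p+q+1}$ matches up correctly with the shift $b_n = a_{n-1} + c'$ so that subadditivity of $b_n$ comes out clean, and tracking that the final infimum range $p > 1$ corresponds to $n \geq 2$. The lower-boundedness assumption $\inf a_{p-1}/p > -\infty$ is used precisely to rule out the degenerate case in Fekete's lemma where the limit could in principle be $-\infty$.
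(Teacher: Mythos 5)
Your reduction via $b_n := a_{n-1}+c'$ to Fekete's subadditive lemma is correct and is exactly the argument the paper intends: the paper merely points to Walters' Theorem 4.9 (Fekete's lemma) and omits the details, while your substitution makes the reduction explicit and clean. The only caveat is indexing: under a literal reading of ``$p,q>1$'' (i.e.\ $p,q\ge 2$) the substitution $n=p+1$, $m=q+1$ gives subadditivity of $b$ only for $n,m\ge 3$, yet the claimed formula (infimum over $p>1$, i.e.\ $\inf_{n\ge 2}b_n/n$) requires subadditivity for $p,q\ge 1$; since that is what actually holds where the lemma is applied (Theorem~\ref{3.4t}, where $p,q$ are word lengths, hence $\ge 1$), your implicit reading is the right one and the imprecision lies in the lemma statement, not your argument.
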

\begin{proof}
The proof is similar to the Theorem 4.9 in \cite{Walters}, so we omit the proof.
\end{proof}

\begin{remark}
Indeed, when $Z=X$ and $\Phi=\{\varphi\}$, i.e.,~$\varphi_{0}=\varphi_{1}=\cdots=\varphi_{m-1}=\varphi$, it is easy to get $\underline{CP}_{X}(G,\Phi)=\overline{CP}_{X}(G,\Phi)=P_{X}(G,\varphi)$, where $P_{X}(G,\varphi)$ is denoted by the topological pressure of free semigroup actions on $X$ in \cite{Lin}.
\end{remark}

Next, we discuss the relationship between the topological pressure and upper capacity topological pressure of a free semigroup action generated by $G$ on $Z$ when $Z$ is a compact $G$-invariant set.
Given a compact $G$-invariant set $Z\subset X$ and an open cover $\mathcal{U}$ of $X$, we choose any $\alpha>P_{Z}(G,\Phi,\mathcal{U})$, then
\[
m(Z,\alpha,\Phi,\mathcal{U})=\lim\limits_{N\to\infty}M(Z,\alpha,\Phi,\mathcal{U},N)=0.
\]
Since $M(Z,\alpha,\Phi,\mathcal{U},N)$ is non-decreasing as $N$ increases and non-negative, it follows that $M(Z,\alpha,\Phi,\mathcal{U},N)=0 $ for any $N$. Therefore, for any $ w \in F_{m}^+ $ and $|w|=N$, we have $M_{w}(Z,\alpha,\Phi,\mathcal{U},N)=0 $. For
$ M_{w}(Z,\alpha,\Phi,\mathcal{U},2)=0$, there exists $ \mathcal{A}_{w} \subset \mathcal{S}(\mathcal{U})$ such that $\mathcal{A}_{w} $ covers $Z$ (i.e., for any $\textbf{U} \in \mathcal{A}_{w}$, there exists $w_{\textbf{U}} \in F_{m}^+$ such that $|w_{\textbf{U}}|=m(\textbf{U})-1, \overline{w} \leq \overline{w_{\textbf{U}}}$ and $\bigcup\limits_{\textbf{U}\in \mathcal{A}_{w}}X_{w_{\textbf{U}}}(\textbf{U})\supset Z$) and

\begin{equation}\label{3.6}
Q(Z,\alpha,\Phi,\mathcal{A}_{w}):=\sum\limits_{\textbf{U} \in \mathcal{A}_{w}} \exp\left(-\alpha\cdot m(\textbf{U})+\sup_{x \in X_{w_{\textbf{U}}}(\textbf{U})} S_{w_{\textbf{U}}}\Phi(x)\right) ~< ~p ~< ~1,
\end{equation}
where $p$ is a constant, $c=\max_{0 \leq i\leq m-1} e^{\|\varphi_{i}\|}$ and $c p < 1$.
Since $Z$ is compact we can choose $\mathcal{A}_{w}$ to be finite and $K \geq 3$ to be a constant and
\begin{equation}\label{3.7}
\mathcal{A}_{w} \subset \bigcup\limits_{m=3}^{K} \mathcal{S}_{m}(\mathcal{U}).
\end{equation}
For any $w^{(1)}, w^{(2)} \in F_m^+, |w^{(1)}|=|w^{(2)}|=2$ and $j \in \{0,1,\ldots, m-1\}$,
we can construct
\[
\mathcal{A}_{w^{(1)}jw^{(2)}}=\{\textbf{U}\textbf{V} : \textbf{U} \in \mathcal{A}_{w^{(1)}}~{\rm and}~\textbf{V} \in \mathcal{A}_{w^{(2)}}\},
\]
where $ \mathcal{A}_{w^{(1)}}, \mathcal{A}_{w^{(2)}} $ satisfy (\ref{3.6}), (\ref{3.7}). Then
\[
X(\textbf{U}\textbf{V})=X_{w_{\textbf{U}}}(\textbf{U})\cap(f_j \circ f_{\overline{w_{\textbf{U}}}})^{-1}(X_{w_{\textbf{V}}}(\textbf{V})),
\]
where the word corresponds to $ \textbf{U}\textbf{V} $ is $w_{\textbf{U}}jw_{\textbf{V}}$ and $ m(\textbf{U}\textbf{V})=m(\textbf{U})+m(\textbf{V}) \geq 6$.
Since $Z$ is $G$-invariant, then $\mathcal{A}_{w^{(1)}jw^{(2)}}$ covers $ Z $. It is easy to see that
\[
Q(Z,\alpha,\Phi,\mathcal{A}_{w^{(1)}jw^{(2)}})\leq c \cdot Q(Z,\alpha,\Phi,\mathcal{A}_{w^{(1)}})\cdot  Q(Z,\alpha,\Phi,\mathcal{A}_{w^{(2)}})<c p^2.
\]
By mathematical induction, for each $n \in \mathbb{N}$ and $j_1,\ldots,j_{n-1} \in \{0,1,\ldots, m-1\}$, we can define
$\mathcal{A}_{w^{(1)}j_1w^{(2)}j_2 \ldots w^{(n-1)}j_{n-1}w^{(n)}}$  which covers $Z$ and satisfies
\[
Q(Z,\alpha,\Phi,\mathcal{A}_{w^{(1)}j_1w^{(2)}j_2 \ldots w^{(n-1)}j_{n-1}w^{(n)}})<c^{n-1} p^n.
\]
Let $\Gamma_{w^{(1)}j_1w^{(2)} \ldots}=\mathcal{A}_{w^{(1)}}\cup\mathcal{A}_{w^{(1)}j_1w^{(2)}}\cup \cdots$. Since $Z$ is $G$-invariant, then
$\Gamma_{w^{(1)}j_1w^{(2)} \ldots}$ covers $Z$ and
\[
Q(Z,\alpha,\Phi,\Gamma_{w^{(1)}j_1w^{(2)} \ldots})\leq \sum\limits_{n=1}^\infty (c^{n-1} p^n) < \infty.
\]
Therefore, for any $\omega \in \Sigma_{m}^+$, there exists $\Gamma_{\omega}$ covering $Z$ and $Q(Z,\alpha,\Phi,\Gamma_{\omega}) < \infty.$
Put
\[
\mathcal{F}=\{\Gamma_{\omega} : \omega \in \Sigma_{m}^+\}.
\]

Similar to \cite{Ju}, the following condition is given and the following Theorem \ref{3.7t} holds under this condition.
\begin{condition}\label{3.6c}
For any $N>0$ and any $w=i_1i_2 \ldots i_N \in F_m^+$, there exists $\Gamma_{\omega} \in \mathcal{F}$ such that for any $ \textbf{U} \in \Gamma_{\omega} $, $\overline{w} \leq \overline{w_{\textbf{U}}}$ and $ N+1 \leq m(\textbf{U}) \leq N+K$, where $w_{\textbf{U}}$ is the word corresponds to $\textbf{U}$ and $K$  is given by (\ref{3.7}).
\end{condition}

\begin{theorem}\label{3.7t}
Under the condition \ref{3.6c}, for any compact $G$-invariant set $Z\subset X$, we have
\[
P_{Z}(G,\Phi)=\underline{CP}_{Z}(G,\Phi)= \overline{CP}_{Z}(G,\Phi).
\]
Moreover, for any open cover $\mathcal{U}$ of $X$, we have
\[
P_{Z}(G,\Phi,\mathcal{U}) =\underline{CP}_{Z}(G,\Phi,\mathcal{U},)=\overline{CP}_{Z}(G,\Phi,\mathcal{U}).
\]
\end{theorem}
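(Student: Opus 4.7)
The plan is to establish the single non-trivial inequality $\overline{CP}_{Z}(G,\Phi,\mathcal{U}) \le P_{Z}(G,\Phi,\mathcal{U})$, since the chain $P_{Z}(G,\Phi,\mathcal{U}) \le \underline{CP}_{Z}(G,\Phi,\mathcal{U}) \le \overline{CP}_{Z}(G,\Phi,\mathcal{U})$ is Remark 4.1(1) and $\underline{CP}_{Z}(G,\Phi,\mathcal{U}) = \overline{CP}_{Z}(G,\Phi,\mathcal{U})$ for $G$-invariant $Z$ is Theorem \ref{3.4t}. These together force all three quantities to agree for each fixed $\mathcal{U}$; passing to $|\mathcal{U}|\to 0$ via Theorem \ref{3.1t} and Remark \ref{4.2r} then yields the corresponding global equality.

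Fix $\alpha > P_{Z}(G,\Phi,\mathcal{U})$, so that $m(Z,\alpha,\Phi,\mathcal{U}) = 0$. The construction immediately preceding the theorem statement produces, for every $\omega \in \Sigma_{m}^{+}$, a cover $\Gamma_{\omega}$ of $Z$ with $Q(Z,\alpha,\Phi,\Gamma_{\omega}) \le p/(1-cp) =: B$, a bound uniform in $\omega$. Condition \ref{3.6c} then selects, for each $N$ and each $w$ with $|w|=N$, a specific $\omega = \omega(w)$ whose $\Gamma^{(w)} := \Gamma_{\omega(w)}$ covers $Z$ by strings $\textbf{U}$ satisfying $N+1 \le m(\textbf{U}) \le N+K$ with $w$ as a prefix of $w_{\textbf{U}}$. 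For each $\textbf{U} = (U_0,\ldots,U_n) \in \Gamma^{(w)}$, define the truncation $\pi(\textbf{U}) := (U_0,\ldots,U_N)$; then the word attached to $\pi(\textbf{U})$ is exactly $w$, $m(\pi(\textbf{U}))=N+1$, and $X_w(\pi(\textbf{U})) \supset X_{w_{\textbf{U}}}(\textbf{U})$, so $\pi(\Gamma^{(w)})$ is an admissible cover for $\Lambda_{w}(Z,\Phi,\mathcal{U},N)$.

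Picking any $x_0 \in X_{w_{\textbf{U}}}(\textbf{U})$ and combining the variation bound on $X_w(\pi(\textbf{U}))$, of magnitude at most $N\gamma(\mathcal{U})$, with the tail estimate $|S_{w_{\textbf{U}}}\Phi(x_0) - S_w\Phi(x_0)| \le (K-1)\max_{j}\|\varphi_j\|$ coming from $|w_{\textbf{U}}|-N \le K-1$, one obtains
\[
\sup_{X_w(\pi(\textbf{U}))} S_w\Phi \;\le\; \sup_{X_{w_{\textbf{U}}}(\textbf{U})} S_{w_{\textbf{U}}}\Phi + N\gamma(\mathcal{U}) + (K-1)\max_{j}\|\varphi_j\|,
\]
where $\gamma(\mathcal{U}) := \sup\{|\varphi_j(x)-\varphi_j(y)| : x,y \in U,~U \in \mathcal{U},~0 \le j \le m-1\}$. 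Summing over $\textbf{U} \in \Gamma^{(w)}$, using $m(\textbf{U}) \le N+K$ to pass between $\exp(-\alpha\, m(\textbf{U}))$ and $\exp(-\alpha(N+1))$, and invoking $Q(Z,\alpha,\Phi,\Gamma^{(w)}) \le B$, produces $\Lambda_{w}(Z,\Phi,\mathcal{U},N) \le C\exp(N(\alpha+\gamma(\mathcal{U})))$ with $C$ independent of $w$ and $N$. Averaging over $|w|=N$ preserves this bound for $\Lambda(Z,\Phi,\mathcal{U},N)$, and Lemma \ref{3.2l} gives $\overline{CP}_{Z}(G,\Phi,\mathcal{U}) \le \alpha + \gamma(\mathcal{U})$.

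The main obstacle is the $\gamma(\mathcal{U})$ slack produced by the truncation step. Letting $\alpha \searrow P_{Z}(G,\Phi,\mathcal{U})$ and then $|\mathcal{U}| \to 0$ dissolves this error and delivers the global equality cleanly. For the pointwise-in-$\mathcal{U}$ equality asserted in the ``Moreover'' clause the slippage must instead be absorbed by a sharper argument: I would exploit the submultiplicativity of $\Lambda(Z,\Phi,\mathcal{U},\cdot)$ derived inside the proof of Theorem \ref{3.4t}, so that a Fekete-type averaging drives the additive error to zero, or alternatively refine $\mathcal{U}$ to a cover $\mathcal{V}$ with $\gamma(\mathcal{V})<\varepsilon$ and transfer the resulting bound back using the monotonicity relations from the proof of Theorem \ref{3.1t}. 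This removal of the truncation loss is the most delicate part of the argument.
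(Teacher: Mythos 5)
Your argument tracks the paper's proof step for step: fix $\alpha > P_{Z}(G,\Phi,\mathcal{U})$, invoke the construction preceding the theorem to obtain covers $\Gamma_{\omega}$ of $Z$ with $Q(Z,\alpha,\Phi,\Gamma_{\omega})$ bounded uniformly in $\omega$, use Condition~\ref{3.6c} to truncate each string to its initial segment of length $N+1$, estimate the new supremum $\sup_{X_{w}(\textbf{U}^{*})}S_{w}\Phi$ by the original $\sup_{X_{w_{\textbf{U}}}(\textbf{U})}S_{w_{\textbf{U}}}\Phi$ plus $N\varepsilon(|\mathcal{U}|)+K\max_{j}\|\varphi_{j}\|$, and read off via Lemma~\ref{3.2l} that $\overline{CP}_{Z}(G,\Phi,\mathcal{U})\le\alpha+\varepsilon(|\mathcal{U}|)$; sending $\alpha\searrow P_{Z}(G,\Phi,\mathcal{U})$ and then $|\mathcal{U}|\to0$ yields the first displayed chain of equalities. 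This is exactly the published argument, including the uniform bound $p/(1-cp)$ on $Q$.

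The slack you flag for the fixed-cover ``Moreover'' clause is also not resolved in the paper: its proof derives precisely the same inequality $\alpha>\overline{CP}_{Z}(G,\Phi,\mathcal{U})-\varepsilon(|\mathcal{U}|)$ and then immediately lets $|\mathcal{U}|\to0$, so as written it establishes only the global identity and leaves a cover-dependent error in the fixed-$\mathcal{U}$ claim. Of your two proposed repairs, the Fekete route would not close this gap: the error $N\varepsilon(|\mathcal{U}|)$ grows linearly in $N$, so dividing by $N$ leaves $\varepsilon(|\mathcal{U}|)$ intact, and the submultiplicativity of $\Lambda(Z,\Phi,\mathcal{U},\cdot)$ from Theorem~\ref{3.4t} only gives existence of the limit, i.e.\ $\underline{CP}_{Z}(G,\Phi,\mathcal{U})=\overline{CP}_{Z}(G,\Phi,\mathcal{U})$, not that either coincides with $P_{Z}(G,\Phi,\mathcal{U})$. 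So your reconstruction is faithful, and your instinct that the ``Moreover'' clause is the delicate point is well founded: it is left unjustified in the source as well.
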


\begin{proof}
Under the condition \ref{3.6c}, for any $ N >0$ and any $ w=i_1i_2 \ldots i_N \in F_m^+ $, there is $\Gamma_{\omega} \in \mathcal{F}$ covering $Z$ such that for any $\textbf{U} \in \Gamma_{\omega}$, the word corresponds to $\textbf{U}  $ is $ w_{\textbf{U}} $ and $ \overline{w}\leq \overline{w_{\textbf{U}}}.$ Then for any $ x \in Z $, there exists a string $ \textbf{U}=(U_0,U_1,\ldots, U_N,\ldots,U_{N+P}) \in \Gamma_{\omega} $ such that $x \in X_{w_{\textbf{U}}} (\textbf{U})$, where $0\leq P<K$. Let $\textbf{U}^{*}=(U_0,U_1,\ldots, U_N)$. Then $X_{w_{\textbf{U}}} (\textbf{U}) \subset X_{w} (\textbf{U}^{*}) $.
Using $\Gamma_{w}^{*}$ denotes the collection of all substrings $\textbf{U}^{*}$ constructed above
and let
\[
\varepsilon(|\mathcal{U}|):=\sup\{|\varphi_{i}(x)-\varphi_{i}(y)|: x,y \in U ,\forall \ U \in \mathcal{U}, 0\leq i \leq m-1\}.
\]
Because of $\varphi_{i}\in C(X,\mathbb{R})$, $\varphi_{i}$ is uniformly continuous for $0\leq i \leq m-1$. Then $\varepsilon(|\mathcal{U}|)$ is finite and $\lim\limits_{|\mathcal{U}|\rightarrow 0}\varepsilon(|\mathcal{U}|)=0$.
We have
\begin{align*}
\sup_{y \in X_{w}(\textbf{U}^{*})}S_{w}\Phi(y)
 \leq \sup_{x \in X_{w_{\textbf{U}}}(\textbf{U})}S_{w_{\textbf{U}}}\Phi(x)+K\cdot\max_{0\leq i \leq m-1}\|\varphi_{i}\|+N\varepsilon(|\mathcal{U}|).
\end{align*}
Therefore,
\begin{align*}
e^{-(\alpha+\varepsilon(|\mathcal{U}|)) N}\sum_{\textbf{U}^{*} \in \Gamma_{w}^{*}} &\exp \big(\sup_{y \in X_{w}(\textbf{U}^{*})} S_{w}\Phi(y)\big)\\
&\leq \max\{1, e^{\alpha K}\}\cdot e^{K \cdot \max_{0\leq i \leq m-1}\|\varphi_{i}\|}\cdot Q(Z,\alpha,\Phi,\Gamma_{\omega})\\
&<\infty.\\
\end{align*}
It follows that
\[
e^{-(\alpha+\varepsilon(|\mathcal{U}|)) N}\Lambda_{w}(Z,\Phi,\mathcal{U},N)<\infty.
\]
Taking average yields
\[
e^{-(\alpha+\varepsilon(|\mathcal{U}|)) N}\Lambda(Z,\Phi,\mathcal{U},N)<\infty.
\]
By Lemma \ref{3.2l}
\[
\alpha+\varepsilon(|\mathcal{U}|)> \overline{CP}_{Z}(G,\Phi,\mathcal{U}),
\]
that is,
\[
\alpha> \overline{CP}_{Z}(G,\Phi,\mathcal{U})-\varepsilon(|\mathcal{U}|).
\]
Letting $|\mathcal{U}|\rightarrow 0$, we get $P_{Z}(G,\Phi)> \overline{CP}_{Z}(G,\Phi)$.
\end{proof}

\section{\emph{Two equivalent definitions of topological pressure in the present paper}}

Let $(X,d)$ be a compact metric space. Now, we describe two other approaches to redefine the topological pressure and lower and upper capacity topological pressure of $G=\{f_0,\ldots,f_{m-1}\}$ on any subset of $X$ and $\Phi=\{\varphi_{0}, \varphi_{1},\cdots,\varphi_{m-1}\},$ where $f_{i}(i=0,1,\cdots,m-1)$ is continuous and $\varphi_{0}, \varphi_{1},\cdots,\varphi_{m-1} \in C(X,\mathbb{R})$.

For each $w \in F_m^{+}$, a new metric $d_w$ on $X$ (named Bowen metric) is given by
\[d_w(x_1,x_2)=\max_{w' \leq \overline{w}} d(f_{w'}(x_1),f_{w'}(x_2)).\]
Clearly, if $\overline{w'} \leq \overline{w''}$, then $d_{w'}(x_1,x_2) \leq d_{w''}(x_1,x_2)$ for all $x_1,x_2 \in X$.

\subsection{Definition using Bowen balls}
Fix a number $\delta > 0$. Given $w\in F_{m}^+$ and a point $x\in X$, define the $(w,\delta)$-Bowen ball at $x$ by
\[
B_{w}(x,\delta)=\{y \in X : d(f_{w'}(x),f_{w'}(y)) \le \delta, ~{\rm for} ~w' \le \overline{w} \}.
\]
Put $\mathcal{S}=X\times F_{m}^{+} $. We define the collection of subsets
\[
\mathcal{F}=\{B_{w}(x,\delta) : x\in X, w \in F_{m}^+\},
\]
and three functions $\xi,\eta,\psi: \mathcal{S}\rightarrow \mathbb{R} $ as follows
\[
\xi(x,w)=\exp\left(\sup_{y \in B_{w}(x,\delta)}S_{w}\Phi(y)\right),
\]
\[
 ~\eta(x,w)=\exp\left(-(|w|+1)\right),~ \psi(x,w)=(|w|+1)^{-1}.
\]

One can easy verify that the collection of subsets $\mathcal{F}$ and the functions $\xi,\eta,\psi$  satisfy conditions (1), (2), (3) and $(3')$  in section 3.1. Therefore, they determine a C-P structure $\tau =(\mathcal{S},\mathcal{F},\xi,\eta,\psi)$ on $X$.

Given $w \in F_m^{+}, |w|=N, Z\subset X$ and $\alpha \in \mathbb{R}$, we define
\begin{align*}
\overline{M}_{w}(Z,\alpha,\Phi,\delta,N):&=\inf\limits_{\mathcal{G}_{w} }\left\{\sum\limits_{B_{w'}(x,\delta)\in\mathcal{G}_{w}} \xi(x,|w|)\eta(x,|w|)^{\alpha}\right\}\\
&=\inf\limits_{\mathcal{G}_{w} }\left\{\sum\limits_{B_{w'}(x,\delta)\in\mathcal{G}_{w} }\exp\left(-\alpha\cdot (|w'|+1)+\sup_{y \in B_{w'}(x,\delta)}S_ {w'} \Phi(y)\right)\right\},
\end{align*}
where the infimum is taken over all finite or countable subcollections $\mathcal{G}_{w} \subset \mathcal{F}$ covering $Z$ (i.e. for any $ B_{w'}(x,\delta) \in \mathcal{G}_{w}, \overline{w}\leq \overline{w'} $ and $ \bigcup\limits_{B_{w'}(x,\delta) \in \mathcal{G}_{w}} B_{w'}(x,\delta) \supset Z).$

Let
\[
\overline{M}(Z,\alpha,\Phi,\delta,N)=\frac{1}{m^{N}}\sum\limits_{|w|=N}\overline{M}_{w}(Z,\alpha,\Phi,\delta,N).
\]

We can easily verify that the function $\overline{M}(Z,\alpha,\Phi,\delta,N)$ is non-decreasing as $N$ increases.
Therefore, there exists the limit
\[
\overline{m}(Z,\alpha,\Phi,\delta)=\lim\limits_{N\to\infty}\overline{M}(Z,\alpha,\Phi,\delta,N).
\]

Furthermore, by the condition $(3^{'})$ in section 3.1, we can define
\begin{align*}
\overline{R}_{w}(Z,\alpha,\Phi,\delta,N)
&=\inf\limits_{\mathcal{G}_{w}}\left\{\sum\limits_{B_{w}(x,\delta) \in \mathcal{G}_{w}}\exp\left(-\alpha\cdot (N+1)+\sup_{y \in B_{w}(x,\delta)}S_{w} \Phi(y)\right)\right\}\\
&=\overline{\Lambda}_{w}(Z,\Phi,\delta,N)\exp(-\alpha\cdot (N+1)),
\end{align*}
where $\overline{\Lambda}_{w}(Z,\Phi,\delta,N)=\inf\limits_{\mathcal{G}_{w}}\left\{\sum\limits_{B_{w}(x,\delta) \in \mathcal{G}_{w}}\exp\left(\sup\limits_{y \in B_{w}(x,\delta)} S_{w} \Phi(y) \right)\right\},$ the infimum is taken over all finite or countable subcollections $\mathcal{G}_{w} \subset \mathcal{F}$ covering $Z$ and the words corresponds to every ball in $\mathcal{G}_{w}$ are all equal.

Let
\[
\overline{R}(Z,\alpha,\Phi,\delta,N)=\frac{1}{m^{N}}\sum\limits_{|w|=N}\overline{R}_{w}(Z,\alpha,\Phi,\delta,N),
\]
\[
\overline{\Lambda}(Z,\Phi,\delta,N)=\frac{1}{m^{N}}\sum\limits_{|w|=N}\overline{\Lambda}_{w}(Z,\Phi,\delta,N).
\]

We set
\[
\underline{r}(Z,\alpha,\Phi,\delta)=\liminf_{N\to\infty}\overline{R}(Z,\alpha,\Phi,\delta,N),
\]
\[
\overline{r}(Z,\alpha,\Phi,\delta)=\limsup_{N\to\infty}\overline{R}(Z,\alpha,\Phi,\delta,N).
\]

The C-P structure $\tau$ generates the Carath\'{e}odory-Pesin dimension of $Z$ and the lower and upper Carath\'{e}odory-Pesin capacities of $Z$  with respect to $G$. We denote them by $P_{Z}(G,\Phi,\delta), \underline{CP}_{Z}(G,\Phi,\delta),$ and $\overline{CP}_{Z}(G,\Phi,\delta)$ respectively. We have that
\[
P_{Z}(G,\Phi,\delta)=\inf\{\alpha:\overline{m}(Z,\alpha,\Phi,\delta)=0\}=\sup\{\alpha: \overline{m}(Z,\alpha,\Phi,\delta)=\infty\}.
\]
\[
\underline{CP}_{Z}(G,\Phi,\delta)=\inf\{\alpha: \underline{r}(Z,\alpha,\Phi,\delta)=0\}=\sup\{\alpha: \underline{r}(Z,\alpha,\Phi,\delta)=\infty\}.
\]
\[
\overline{CP}_{Z}(G,\Phi,\delta)=\inf\{\alpha: \overline{r}(Z,\alpha,\Phi,\delta)=0\}=\sup\{\alpha: \overline{r}(Z,\alpha,\Phi,\delta)=\infty\}.
\]

\begin{theorem}\label{4.1t}
For any set $Z \subset X$, the following limits exist:
 \[
 P_{Z}(G,\Phi)=\lim\limits_{\delta\rightarrow 0}P_{Z}(G,\Phi,\delta),
 \]
 \[
 \underline{CP}_{Z}(G,\Phi)=\lim\limits_{\delta\rightarrow 0}\underline{CP}_{Z}(G,\Phi,\delta),
 \]
 \[
 \overline{CP}_{Z}(G,\Phi)=\lim\limits_{\delta\rightarrow 0}\overline{CP}_{Z}(G,\Phi,\delta).
 \]
\end{theorem}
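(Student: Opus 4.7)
The plan is to adapt the proof of Theorem~\ref{3.1t} to the present setting, replacing the refinement of finite open covers by the monotonicity of Bowen balls under the radius parameter, and using the uniform continuity of each $\varphi_i$ to absorb the resulting error in $\alpha$. Set
\[
\epsilon(\delta) := \max_{0 \leq i \leq m-1} \sup\{|\varphi_i(y) - \varphi_i(z)| : d(y, z) \leq \delta\}.
\]
Since $X$ is compact and each $\varphi_i$ is continuous, $\varphi_i$ is uniformly continuous, so $\epsilon(\delta) \to 0$ as $\delta \to 0$.

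The key comparison step goes as follows. Fix $0 < \delta_2 < \delta_1$. Because $B_w(x, \delta_2) \subset B_w(x, \delta_1)$ for every $w$ and $x$, any admissible $\delta_2$-cover $\{B_{w_i'}(x_i, \delta_2)\}$ of $Z$ (with $\overline{w} \leq \overline{w_i'}$, so in particular $|w_i'| \geq N$) yields an admissible $\delta_1$-cover $\{B_{w_i'}(x_i, \delta_1)\}$ of $Z$ with identical centers and words. For any $y \in B_{w_i'}(x_i, \delta_1)$ and any $w' \leq \overline{w_i'}$ one has $d(f_{w'}(x_i), f_{w'}(y)) \leq \delta_1$, so every term in the Birkhoff-type sum satisfies $|\varphi_{\cdot}(f_{w'}(y)) - \varphi_{\cdot}(f_{w'}(x_i))| \leq \epsilon(\delta_1)$. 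Summing and using $x_i \in B_{w_i'}(x_i, \delta_2)$ yields
\[
\sup_{y \in B_{w_i'}(x_i, \delta_1)} S_{w_i'}\Phi(y) \leq \sup_{y \in B_{w_i'}(x_i, \delta_2)} S_{w_i'}\Phi(y) + |w_i'|\,\epsilon(\delta_1).
\]
Plugging this into the defining sum, absorbing $|w_i'|\,\epsilon(\delta_1)$ into a shift of $\alpha$, and taking the infimum over $\delta_2$-covers gives, for each word $w$ of length $N$,
\[
\overline{M}_w(Z, \alpha, \Phi, \delta_1, N) \leq \overline{M}_w(Z, \alpha - \epsilon(\delta_1), \Phi, \delta_2, N),
\]
and the analogous inequality for $\overline{R}_w$. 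Averaging over $|w| = N$ and passing to $N \to \infty$ yields
\[
\overline{m}(Z, \alpha, \Phi, \delta_1) \leq \overline{m}(Z, \alpha - \epsilon(\delta_1), \Phi, \delta_2),
\]
and similarly $\overline{r}(Z, \alpha, \Phi, \delta_1) \leq \overline{r}(Z, \alpha - \epsilon(\delta_1), \Phi, \delta_2)$ and the same for $\underline{r}$.

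From these inequalities the critical-value characterization of $P_Z, \underline{CP}_Z, \overline{CP}_Z$ immediately gives, for every $\delta_2 < \delta_1$,
\[
P_Z(G, \Phi, \delta_1) \leq P_Z(G, \Phi, \delta_2) + \epsilon(\delta_1),
\]
together with the analogous inequalities for the two capacity quantities. Fixing $\delta_1$ and taking the $\liminf$ over $\delta_2 \to 0$ produces $P_Z(G, \Phi, \delta_1) - \epsilon(\delta_1) \leq \liminf_{\delta_2 \to 0} P_Z(G, \Phi, \delta_2)$; then letting $\delta_1 \to 0$ and using $\epsilon(\delta_1) \to 0$ gives $\limsup_{\delta \to 0} P_Z(G, \Phi, \delta) \leq \liminf_{\delta \to 0} P_Z(G, \Phi, \delta)$, so the first limit exists. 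The arguments for the lower and upper capacity limits are identical, with $\overline{R}_w$ in place of $\overline{M}_w$.

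The only delicate point, and the one I would spell out carefully, is the bookkeeping verifying that the derived $\delta_1$-cover really lies in the admissible family for $\overline{M}_w$ respectively $\overline{R}_w$: the condition $\overline{w} \leq \overline{w_i'}$ is preserved automatically because the words are unchanged, while the length constraints ($|w_i'| \geq N$ for $\overline{M}_w$, and $|w_i'| = N$ for $\overline{R}_w$) transfer verbatim. Once this is confirmed, the rest of the proof is essentially a transcription of the argument in Theorem~\ref{3.1t}, with $\gamma(\mathcal{U})$ replaced by $\epsilon(\delta_1)$.
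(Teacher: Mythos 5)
Your proof establishes only half of what the theorem asserts. You correctly show, via the monotonicity of Bowen balls in $\delta$ together with the uniform-continuity error $\epsilon(\delta)$, that $\lim_{\delta\to 0}P_Z(G,\Phi,\delta)$, $\lim_{\delta\to 0}\underline{CP}_Z(G,\Phi,\delta)$ and $\lim_{\delta\to 0}\overline{CP}_Z(G,\Phi,\delta)$ all exist. But the left-hand sides $P_Z(G,\Phi)$, $\underline{CP}_Z(G,\Phi)$, $\overline{CP}_Z(G,\Phi)$ are not being \emph{defined} here; they were already defined in Section~4 as the limits $\lim_{|\mathcal{U}|\to 0}P_Z(G,\Phi,\mathcal{U})$, etc., built from the open-cover C-P structure $\mathcal{S}(\mathcal{U})$, and the point of this theorem (and of Section~5 as a whole) is precisely that the Bowen-ball construction reproduces \emph{the same} values. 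Your argument never compares a Bowen ball $B_w(x,\delta)$ with a set $X_\omega(\mathbf{U})$, so it cannot yield the equality.

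The paper closes this gap by the two-sided inclusion: if $x\in X_\omega(\mathbf{U})$ for $\mathbf{U}\in\mathcal{S}_{k+1}(\mathcal{U})$, then
\[
B_{\omega|_{[0,k-1]}}\bigl(x,\tfrac12\delta(\mathcal{U})\bigr)\subset X_\omega(\mathbf{U})\subset B_{\omega|_{[0,k-1]}}\bigl(x,2|\mathcal{U}|\bigr),
\]
where $\delta(\mathcal{U})$ is the Lebesgue number of $\mathcal{U}$. This lets one pass covers in both directions between the two C-P structures, giving two-sided bounds relating $P_Z(G,\Phi,\mathcal{U})$ to $P_Z(G,\Phi,\delta)$ for $\delta$ comparable to $|\mathcal{U}|$ and $\delta(\mathcal{U})$ (up to an $\epsilon$-shift in $\alpha$ exactly as in your argument). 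Combined with Theorem~\ref{3.1t}, which guarantees the open-cover limit exists, this forces the Bowen-ball limit to exist and coincide with it. Your monotonicity trick is a sound alternative route to \emph{existence}, but you must still carry out the cover-transfer step (or argue as in the proof of Theorem~\ref{4.2t}, which does this more carefully) to obtain the identification with $P_Z(G,\Phi)$.
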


\begin{proof}
Let $\mathcal{U}$ be a finite open cover of $X $, and $\delta(\mathcal{U})$ is the Lebesgue number of $\mathcal{U}$. It is easily to see that for any $x\in X$, if $x\in X_{\omega}(\textbf{U})$ for some $\textbf{U}\in \mathcal{S}_{k+1}(\mathcal{U})$ and some $\omega \in \Sigma^+_m $ then
\[
B_{\omega|_{[0,k-1]}}(x,\frac{1}{2}\delta(\mathcal{U}))\subset X_{\omega}(\textbf{U})\subset B_{\omega|_{[0,k-1]}}(x,2|\mathcal{U}|).
\]
It follows from Theorem 4.1 that
\[
 P_{Z}(G,\Phi)=\lim\limits_{\delta\rightarrow 0}P_{Z}(G,\Phi,\delta),
 \]
\[
 \underline{CP}_{Z}(G,\Phi)=\lim\limits_{\delta\rightarrow 0}\underline{CP}_{Z}(G,\Phi,\delta),
 \]
\[
\overline{CP}_{Z}(G,\Phi)=\lim\limits_{\delta\rightarrow 0}\overline{CP}_{Z}(G,\Phi,\delta).
 \]
\end{proof}

\begin{remark}
Similar to Remark \ref{4.2r}, we have
\[
\underline{CP}_{Z}(G,\Phi)=\lim\limits_{\delta \rightarrow 0}\liminf_{N\rightarrow \infty}\frac{\log \overline{\Lambda}(Z,\Phi,\delta,N)}{N},
\]
\[
\overline{CP}_{Z}(G,\Phi)=\lim\limits_{\delta \rightarrow 0}\limsup_{N\rightarrow \infty}\frac{\log \overline{\Lambda}(Z,\Phi,\delta,N)}{N}.
\]
\end{remark}

\subsection{Definition using the center of Bowen's ball }

Put $\mathcal{S}=X\times F_{m}^+$, define the collection of subsets
\[
\mathcal{F}=\{B_{w}(x,\delta) : x\in X, w \in F_{m}^+\}.
\]
We redefine three functions $\xi,\eta,\psi: \mathcal{S}\rightarrow \mathbb{R} $ as follows
\[
\xi(x,w)=\exp\left(S_{w} \Phi(x)\right),
\]
\[
 ~\eta(x,w)=\exp\left(-|w|\right),~ \psi(x,w)=|w|^{-1}.
\]
One can easily verify that the collection of subsets $\mathcal{F}$ and the functions $\xi,\eta,\psi$  satisfy conditions (1), (2), (3) and $(3')$ in section 3.1. Therefore, they determine a C-P structure $\tau =(\mathcal{S},\mathcal{F},\xi,\eta,\psi)$ on $X$.
Given $w \in F_m^{+}, |w|=N, Z\subset X$ and $\alpha \in \mathbb{R}$, we define
\begin{align*}
M'_{w}(Z,\alpha,\Phi,\delta,N):&=\inf\limits_{\mathcal{G}_{w} }\left\{\sum\limits_{B_{w'}(x,\delta)\in\mathcal{G}_{w}} \xi(x,|w|)\eta(x,|w|)^{\alpha}\right\}\\
&=\inf\limits_{\mathcal{G}_{w} }\left\{\sum\limits_{B_{w'}(x,\delta)\in\mathcal{G}_{w} }\exp\bigg(-\alpha\cdot |w'|+S_{w'} \Phi(x)\bigg)\right\},
\end{align*}
where the infimum is taken over all finite or countable subcollections $\mathcal{G}_{w} \subset \mathcal{F}$ covering $ Z $ (i.e., for any $ B_{w'}(x,\delta) \in \mathcal{G}_{w}, \overline{w}\leq \overline{w'} $ and $ \bigcup\limits_{B_{w'}(x,\delta) \in \mathcal{G}_{w}} B_{w'}(x,\delta) \supset Z). $

Let
\[
M'(Z,\alpha,\Phi,\delta,N)=\frac{1}{m^{N}}\sum\limits_{|w|=N} M'_{w}(Z,\alpha,\Phi,\delta,N).
\]

We can easily verify that the function $M'(Z,\alpha,\Phi,\delta,N)$ is non-decreasing as $N$ increases.
Therefore, there exists the limit
\[
m'(Z,\alpha,\Phi,\delta)=\lim\limits_{N\to\infty}M'(Z,\alpha,\Phi,\delta,N).
\]

Furthermore, by the condition $(3^{'})$ in section 3.1, we can define
\begin{align*}
R'_{w}(Z,\alpha,\Phi,\delta,N)
&=\inf\limits_{\mathcal{G}_{w}}\left\{\sum\limits_{B_{w}(x,\delta) \in \mathcal{G}_{w}}\exp\bigg(-\alpha\cdot N+S_{w} \Phi(x)\bigg)\right\}\\
&=\Lambda'_{w}(Z,\Phi,\delta,N)\exp(-\alpha\cdot N),
\end{align*}
where $\Lambda'_{w}(Z,\Phi,\delta,N)=\inf\limits_{\mathcal{G}_{w}}\left\{\sum\limits_{B_{w}(x,\delta) \in \mathcal{G}_{w}}\exp\left( S_{w} \Phi(x)\right)\right\},$
the infimum is taken over all finite or countable subcollections $\mathcal{G}_{w} \subset \mathcal{F}$ covering $Z$ and the words correspond to every ball in $\mathcal{G}_{w}$ is all equal.

Let
\[R'(Z,\alpha,\Phi,\delta,N)=\frac{1}{m^{N}}\sum\limits_{|w|=N}R'_{w}(Z,\alpha,\Phi,\delta,N),\]
\[\Lambda'(Z,\Phi,\delta,N)=\frac{1}{m^{N}}\sum\limits_{|w|=N}\Lambda'_{w}(Z,\Phi,\delta,N).\]

We set
\[ \underline{r'}(Z,\alpha,\Phi,\delta)=\liminf_{N\to\infty}R'(Z,\alpha,\Phi,\delta,N),\]
\[ \overline{r'}(Z,\alpha,\Phi,\delta)=\limsup_{N\to\infty}R'(Z,\alpha,\Phi,\delta,N).\]

The C-P structure $\tau$ generates the Carath\'{e}odory-Pesin dimension of $Z$ and the lower and upper Carath\'{e}odory-Pesin capacities of $Z$  with respect to $G$. We denote them by $P'_{Z}(G,\Phi,\delta), \underline{CP'}_{Z}(G,\Phi,\delta),$ and $\overline{CP'}_{Z}(G,\Phi,\delta)$ respectively. We have that
\[
P'_{Z}(G,\Phi,\delta)=\inf\{\alpha: m'(Z,\alpha,\Phi,\delta)=0\}=\sup\{\alpha: m'(Z,\alpha,\Phi,\delta)=\infty\},
\]
\[
\underline{CP'}_{Z}(G,\Phi,\delta)=\inf\{\alpha: \underline{r'}(Z,\alpha,\Phi,\delta)=0\}=\sup\{\alpha: \underline{r'}(Z,\alpha,\Phi,\delta)=\infty\},
\]
\[
\overline{CP'}_{Z}(G,\Phi,\delta)=\inf\{\alpha: \overline{r'}(Z,\alpha,\Phi,\delta)=0\}=\sup\{\alpha: \underline{r'}(Z,\alpha,\Phi,\delta)=\infty\}.
\]

\begin{theorem}\label{4.2t}
For any set $Z \subset X$, the following limits exist:
 \[P_{Z}(G,\Phi)=\lim\limits_{\delta\rightarrow 0}P'_{Z}(G,\Phi,\delta),\]
 \[\underline{CP}_{Z}(G,\Phi)=\lim\limits_{\delta\rightarrow 0}\underline{CP'}_{Z}(G,\Phi,\delta),\]
 \[\overline{CP}_{Z}(G,\Phi)=\lim\limits_{\delta\rightarrow 0}\overline{CP'}_{Z}(G,\Phi,\delta).\]
\end{theorem}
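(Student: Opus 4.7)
The plan is to reduce Theorem \ref{4.2t} to Theorem \ref{4.1t} by showing that, for every fixed $\delta>0$, the two C-P structures associated respectively to the set functions $\xi(x,w)=\exp(\sup_{y\in B_w(x,\delta)}S_w\Phi(y))$ and $\xi'(x,w)=\exp(S_w\Phi(x))$ agree in the limit as $\delta\to 0$. The only real content is the relation between $S_w\Phi(x)$ and $\sup_{y\in B_w(x,\delta)}S_w\Phi(y)$; the shift $|w|\mapsto|w|+1$ in $\eta$ produces a bounded multiplicative constant $e^{-\alpha}$ which is irrelevant for the three critical values.

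The key estimate comes from uniform continuity. Since $X$ is compact and each $\varphi_i\in C(X,\mathbb{R})$, the functions $\varphi_0,\ldots,\varphi_{m-1}$ are uniformly continuous, so for each $\varepsilon>0$ there exists $\delta_0=\delta_0(\varepsilon)>0$ such that $|\varphi_i(u)-\varphi_i(v)|<\varepsilon$ whenever $d(u,v)<\delta_0$, for all $i$. For $\delta<\delta_0$, every $y\in B_w(x,\delta)$ satisfies $d(f_{w'}(x),f_{w'}(y))<\delta_0$ for all $\overline{w'}\le\overline{w}$, and so a telescoping argument gives
\[
\bigl|S_w\Phi(x)-S_w\Phi(y)\bigr|\le |w|\varepsilon,\qquad\forall\,y\in B_w(x,\delta).
\]
Combining with the trivial inequality $S_w\Phi(x)\le\sup_{y\in B_w(x,\delta)}S_w\Phi(y)$, I obtain
\[
S_w\Phi(x)\le \sup_{y\in B_w(x,\delta)}S_w\Phi(y)\le S_w\Phi(x)+|w|\varepsilon.
\]

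Plugging this sandwich into the definitions of $M'_w$ and $\overline{M}_w$, and similarly for $R'_w$ and $\overline{R}_w$, I obtain, for any $\alpha\in\mathbb{R}$, any $w$ with $|w|=N$, and any $\delta<\delta_0(\varepsilon)$, the comparisons
\[
e^{-\alpha}\,\overline{M}_w(Z,\alpha+\varepsilon,\Phi,\delta,N)\le M'_w(Z,\alpha,\Phi,\delta,N)\le e^{-\alpha}\,\overline{M}_w(Z,\alpha,\Phi,\delta,N),
\]
and the analogous inequalities for $R'_w$ versus $\overline{R}_w$. Averaging over $|w|=N$ and passing to the limit in $N$ yields
\[
\overline{m}(Z,\alpha+\varepsilon,\Phi,\delta)\le e^{\alpha}m'(Z,\alpha,\Phi,\delta)\le \overline{m}(Z,\alpha,\Phi,\delta),
\]
together with parallel bounds for $\underline{r'},\overline{r'}$ against $\underline{r},\overline{r}$. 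From these it follows that
\[
P_Z(G,\Phi,\delta)-\varepsilon\le P'_Z(G,\Phi,\delta)\le P_Z(G,\Phi,\delta),
\]
and likewise for the two capacity pressures, whenever $\delta<\delta_0(\varepsilon)$.

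Letting $\delta\to 0$ and then $\varepsilon\to 0$, the squeeze forces
\[
\lim_{\delta\to 0}P'_Z(G,\Phi,\delta)=\lim_{\delta\to 0}P_Z(G,\Phi,\delta),
\]
and analogously for the lower and upper capacity pressures, so Theorem \ref{4.1t} finishes the proof. The only nontrivial step is the telescoping estimate above, which is standard but requires care because the orbit segments $\{f_{w'}(x):\overline{w'}\le\overline{w}\}$ are indexed over the semigroup words rather than iterates of a single map; however, uniform continuity of each generator is not needed—only uniform continuity of the potentials $\varphi_i$, which is automatic on the compact space $X$.
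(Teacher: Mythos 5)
Your argument is correct and takes a genuinely different (and cleaner) route than the paper. The paper proves the comparison between $M'_w$ and the original open-cover quantities $M_w(Z,\alpha,\Phi,\mathcal{U},N)$ directly, re-deriving the Bowen-ball inclusions $B_{w'}(x,\tfrac12\gamma(\mathcal{U}))\subset X_{w'}(\textbf{U})\subset B_{w_{\textbf{U}}}(x,\delta)$ inside the proof of Theorem~\ref{4.2t} — in effect repeating the work of Theorem~\ref{4.1t} with the extra oscillation term $\varepsilon(\delta)$ added. You instead compare only the two Bowen-ball-based structures $\overline{M}_w$ and $M'_w$, which differ solely in whether $\xi$ is evaluated at the center $x$ or via a $\sup$ over the ball, and this is settled entirely by the uniform-continuity sandwich $S_w\Phi(x)\le\sup_{y\in B_w(x,\delta)}S_w\Phi(y)\le S_w\Phi(x)+|w|\varepsilon$; Theorem~\ref{4.1t} is then used as a black box to pass from $\overline{M}$ to the open-cover definition. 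This modular reduction is shorter and makes the logical dependence on Theorem~\ref{4.1t} explicit, while the paper's proof is self-contained but duplicates the Lebesgue-number argument. One small slip: the multiplicative constant in your sandwich is $e^{\alpha}$ on the right and $e^{\alpha+\varepsilon}$ on the left, not $e^{-\alpha}$, since $\eta(x,w)^\alpha=e^{-\alpha(|w|+1)}=e^{-\alpha}\cdot e^{-\alpha|w|}$; as you correctly observe, any fixed positive constant independent of $N$ is irrelevant to the three critical values, so this has no effect on the conclusion, but the sign you wrote is the wrong one.
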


\begin{proof}
We use the analogous method as that of \cite{Climenhaga1}. On the one hand, given $\delta > 0$, let
\[
\varepsilon(\delta):=\sup \{|\varphi_{i}(x)-\varphi_{i}(y)|: d(x,y)< \delta, i=0,1,\cdots,m-1\}.
\]
and observe that since $\varphi_{i} \in C(X,\mathbb{R})$ and X is compact, $\varphi_{i}$ is in fact uniformly continuous, hence $\varepsilon(\delta)$ is finite and
$\lim\limits_{\delta \rightarrow 0}\varepsilon(\delta)=0$. Furthermore, given $x \in X$, $w \in F^+_m$ and $|w|=N$, then for any $y \in B_{w}(x, \delta)$, we have
\[
\left|\varphi_{i}(f_{w'}(x))-\varphi_{i}(f_{w'}(y))\right| \leq \varepsilon(\delta) ~for ~any~w'\leq \overline{w},~i=0,1,\cdots,m-1.
\]
Thus
\[
\left|S_{w}\Phi(x)-S_{w}\Phi(y)\right| \leq |w|\varepsilon(\delta).
\]
Now fix $\delta > 0$, choose a finite open cover $\mathcal{U}$ of $X $ with $|\mathcal{U}| < \delta$ and
let $\gamma(\mathcal{U})$ be the Lebesgue number of $\mathcal{U}$.  Let $\mathcal{G}_{w}=\{B_{w'}(x, \frac{1}{2}\gamma(\mathcal{U})): x\in X, \overline{w} \leq \overline{w'}\} $ be a open cover of $Z$, then for each
$B_{w'}(x, \frac{1}{2}\gamma(\mathcal{U})) \in \mathcal{G}_{w}$ there exists $\textbf{U} \in \mathcal{\mathcal{S}}_{|w'|+1}(\mathcal{U})$
such that $B_{w'}(x, \frac{1}{2}\gamma(\mathcal{U})) \subset X_{w'}(\textbf{U})$. Set
\[
\mathcal{G'}_{w}=\{\textbf{U}: B_{w'}(x, \frac{1}{2}\gamma(\mathcal{U})) \subset X_{w'}(\textbf{U})\}
\]
and then
\begin{align*}
M_{w}(Z,\alpha,\Phi,\mathcal{U},N)
& \leq \sum\limits_{\textbf{U}\in\mathcal{G'}_{w}}\exp\left(-\alpha m(\textbf{U})+\sup_{y \in X_{w'}(\textbf{U})} S_{w'}\Phi (y)\right)\\
&\leq \sum\limits_{B_{w'}(x,\frac{1}{2}\gamma(\mathcal{U})) \in\mathcal{G}_{w}}\exp\Big( -\alpha m(\textbf{U})+S_{w'}\Phi (x)+|w'|\varepsilon(\delta) \Big)\\
& \leq e^{-\alpha} \cdot \sum\limits_{B_{w'}(x,\frac{1}{2}\gamma(\mathcal{U}))\in\mathcal{G}_{w}}\exp\Big( -|w'|(\alpha-\varepsilon(\delta))+S_{w'}\Phi(x) \Big).\\
\end{align*}
Moreover, we can get
\[
M_{w}(Z,\alpha,\Phi,\mathcal{U},N) \leq e^{-\alpha} \cdot M'_{w}(Z,\alpha-\varepsilon(\delta),\Phi,\frac{1}{2}\gamma(\mathcal{U})),N),
\]
which implies
\[
M(Z,\alpha,\Phi,\mathcal{U},N) \leq e^{-\alpha} \cdot M'(Z,\alpha-\varepsilon(\delta),\Phi,\frac{1}{2}\gamma(\mathcal{U})),N).
\]
Taking the limit $N \rightarrow \infty$ yields
\[
m(Z,\alpha,\Phi,\mathcal{U}) \leq e^{-\alpha} \cdot m'(Z,\alpha-\varepsilon(\delta),\Phi,\frac{1}{2}\gamma(\mathcal{U}))).
\]
Therefore
\[
P_{Z}(G,\Phi,\mathcal{U}) \leq P'_{Z}(G,\Phi,\frac{1}{2}\gamma(\mathcal{U}))+\varepsilon(\delta),
\]
and as $\delta \rightarrow 0$, that is, $\varepsilon(\delta)\rightarrow 0$, $|\mathcal{U}|\rightarrow 0$, we obtain
\[
P_{Z}(G,\Phi) \leq \liminf_{\gamma(\mathcal{U})\rightarrow 0}P'_{Z}(G,\Phi,\frac{1}{2}\gamma(\mathcal{U})).
\]
On the other hand, fix a cover $\mathcal{U}$ of X with $|\mathcal{U}| < \delta$. Given $ w \in F_{m}^{+}, |w|=N$ and $\mathcal{G}_{w}\subset \mathcal{S}(\mathcal{U})$ covering $Z$, we may assume without loss of generality that for every $\textbf{U} \in \mathcal{G}_{w}$, we have $X_{w_{\textbf{U}}}(\textbf{U}) \cap Z \neq \varnothing$. Then for each $\textbf{U} \in \mathcal{G}_{w}$, we can choose $x \in X_{w_{\textbf{U}}}(\textbf{U}) \cap Z$. We observe
\[
X_{w_{\textbf{U}}}(\textbf{U}) \subset B_{w_{\textbf{U}}}(x,\delta).
\]
Using $\mathcal{F}_{w}$ denotes the collection of all $(w_{\textbf{U}},\delta)$-Bowen ball $B_{w_{\textbf{U}}}(x,\delta)$ constructed above and then
\begin{align*}
M_{w}(Z,\alpha,\Phi,\mathcal{U},N)
&=\inf\limits_{\mathcal{G}_{w}}\left\{\sum\limits_{\textbf{U}\in\mathcal{G}_{w}}\exp\left(-\alpha m(\textbf{U})+\sup_{y \in X_{w_{\textbf{U}}}(\textbf{U})} S_{w_{\textbf{U}}}\Phi (y)\right)\right\}\\
& \geq e^{-\alpha}\cdot \inf_{\mathcal{F}_{w}} \left\{\sum\limits_{ B_{w_{\textbf{U}}}(x,\delta)\in \mathcal{F}_{w}}\exp\bigg(-\alpha |w_{\textbf{U}}|+ S_{w_{\textbf{U}}}\Phi(x)\bigg)\right\}\\
&\geq e^{-\alpha}\cdot M'_{w}(Z,\alpha,\Phi,\delta,N).
\end{align*}
It follows
\[
M(Z,\alpha,\Phi,\mathcal{U},N) \geq  e^{-\alpha}\cdot M'(Z,\alpha,\Phi,\delta,N).
\]
Hence
\[
m(Z,\alpha,\Phi,\mathcal{U}) \geq  e^{-\alpha}\cdot m'(Z,\alpha,\Phi,\delta).
\]
Therefore
\[
P_{Z}(G,\Phi,\mathcal{U}) \geq P'_{Z}(G,\Phi,\delta),
\]
and taking the limit as $\delta \rightarrow 0$ gives
\[
P_{Z}(G,\Phi) \geq \limsup_{\delta\rightarrow 0}P'_{Z}(G,\Phi,\delta),
\]
which completes the proof of the first.
The existence of the two other limits can be proved in a similar way.
\end{proof}

\begin{remark}
(1) When $m=1$, i.e., $G=\{f\},~\Phi=\{\varphi\}$, then $P_{Z}(G,\Phi)$ is consistent with the topological pressure using the centre of Bowen ball which is defined by Climenhaga \cite{Climenhaga} for every $Z \subset X$.\\
(2) Similar to Remark \ref{4.2r}, we have
\[
\underline{CP}_{Z}(G,\Phi)=\lim\limits_{\delta \rightarrow 0}\liminf_{N\rightarrow \infty}\frac{\log \Lambda'(Z,\Phi,\delta,N)}{N},
\]
\[
\overline{CP}_{Z}(G,\Phi)=\lim\limits_{\delta \rightarrow 0}\limsup_{N\rightarrow \infty}\frac{\log \Lambda'(Z,\Phi,\delta,N)}{N}.
\]
\end{remark}

\section{\emph{The proof of Theorem 2.1}}

Let $(X,d)$ be compact metric space and $f_{0}, f_{1},\cdots,f_{m-1}$ be continuous maps onto itself. In this section, as the application of the topological pressure introduced in this paper, we give the connection between topological pressure and Hausdorff dimension on some $Z$ in the form of Bowen's equation, which extends the results of Climenhaga \cite{Climenhaga}. Before proving the Theorem 2.1, we give some relevant results.

\begin{proposition}\label{7.1p}
Let $f_{i}: X\longrightarrow X$ be as in Theorem 2.1. Fix $0 < \alpha \leq \beta < \infty$ and $Z \subset \mathcal{A}([\alpha,\beta])$, then

(1) for any $t \in \mathbb{R}$ and $h > 0$, we have
\begin{equation}\label{7.1}
 P_{Z}(G,-t\Phi)-\beta h \leq P_{Z}(G,-(t+h)\Phi)\leq P_{Z}(G,-t\Phi)-\alpha h.
\end{equation}

(2) The equation $P_{Z}(G,-t\Phi)=0$ has unique roots $t^{*}$ and
\[
\frac{h_{Z}(G)}{\beta} \leq t^{*} \leq \frac{h_{Z}(G)}{\alpha}.
\]

(3) If $\alpha=\beta$, then $ t^{*} = \frac{h_{Z}(G)}{\alpha}$.\\
Where $h_{Z}(G)$ is the topological entropy in Ju et al \cite{Ju}, $\Phi=\{\log a_{0},\log a_{1},\cdots, \log a_{m-1}\}$ and $t \Phi=\{t\cdot \log a_{0},t\cdot \log a_{1},\cdots, t\cdot \log a_{m-1}\}$.

\end{proposition}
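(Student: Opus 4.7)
The plan is to adapt Climenhaga's proof of his Proposition 3.2 in \cite{Climenhaga} to the free semigroup setting, using the equivalent centered-Bowen-ball definition of $P_Z(G,\cdot)$ from Theorem 4.2, the countable stability of $P_Z$ from Proposition 3.1(3), and the Lipschitz continuity in the potential from Theorem 3.3. Parts (2) and (3) will be immediate corollaries of (1), so the bulk of the work lies in establishing (1).

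For (1), I would fix $\varepsilon>0$ and introduce the partition
\[
Z_N^\varepsilon := \{\, x \in Z : (\alpha-\varepsilon)|w| \leq S_w\Phi(x) \leq (\beta+\varepsilon)|w|\ \text{for every}\ w \in F_m^+\ \text{with}\ |w| \geq N\,\}.
\]
The first step is to verify $Z = \bigcup_{N\geq 1} Z_N^\varepsilon$, upgrading the pointwise condition $Z\subset\mathcal{A}([\alpha,\beta])$ to a uniform bound over finite words. Granting this, I would work with the centered-Bowen-ball definition $M'_w(Z_N^\varepsilon,\alpha_0,-(t+h)\Phi,\delta,N')$ for $N'\geq N$, restrict covers to balls whose centers lie in $Z_N^\varepsilon$, split $-(t+h)S_{w'}\Phi(x)=-tS_{w'}\Phi(x)-hS_{w'}\Phi(x)$, and apply the bounds $(\alpha-\varepsilon)|w'|\leq S_{w'}\Phi(x)\leq (\beta+\varepsilon)|w'|$ termwise to get
\[
\exp\!\bigl(-(\alpha_0+h(\beta+\varepsilon))|w'|-tS_{w'}\Phi(x)\bigr) \leq \exp\!\bigl(-\alpha_0|w'|-(t+h)S_{w'}\Phi(x)\bigr) \leq \exp\!\bigl(-(\alpha_0+h(\alpha-\varepsilon))|w'|-tS_{w'}\Phi(x)\bigr).
\]
Summing, infimizing, and then passing to the C-P limits $N'\to\infty$ and $\delta\to 0$ (Theorem 4.2) yields
\[
P_{Z_N^\varepsilon}(G,-t\Phi) - h(\beta+\varepsilon) \leq P_{Z_N^\varepsilon}(G,-(t+h)\Phi) \leq P_{Z_N^\varepsilon}(G,-t\Phi) - h(\alpha-\varepsilon).
\]
Taking $\sup_N$ via Proposition 3.1(3) and letting $\varepsilon\downarrow 0$ delivers (1).

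For (2), the inequality in (1) with any $h>0$ gives strict monotonicity of $t\mapsto P_Z(G,-t\Phi)$ since $\alpha>0$, so there is at most one zero. Specializing (1) to $t=0$ and $h=t$, and invoking $P_Z(G,0)=h_Z(G)$ from Remark 4.1(2), produces the sandwich $h_Z(G)-t\beta \leq P_Z(G,-t\Phi) \leq h_Z(G)-t\alpha$, which simultaneously forces $h_Z(G)/\beta \leq t^* \leq h_Z(G)/\alpha$ and, together with the Lipschitz continuity of $t\mapsto P_Z(G,-t\Phi)$ from Theorem 3.3, allows the intermediate value theorem to produce the root $t^*$. For (3), taking $\alpha=\beta$ in (1) collapses the two inequalities to the equality $P_Z(G,-(t+h)\Phi)=P_Z(G,-t\Phi)-h\alpha$, hence $P_Z(G,-t\Phi)=h_Z(G)-t\alpha$ and $t^*=h_Z(G)/\alpha$.

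The main obstacle is the partition identity $Z=\bigcup_N Z_N^\varepsilon$ in step (1). The hypothesis $Z\subset\mathcal{A}([\alpha,\beta])$ only supplies $\underline{\lambda}_\omega(x)\geq\alpha$ and $\overline{\lambda}_\omega(x)\leq\beta$ pointwise in $\omega$, whereas the partition demands uniformity over every $w\in F_m^+$ with $|w|\geq N$, and the liminf over $n$ need not commute with the infimum over $\omega$. To close this gap I would argue by contradiction with a K\"onig's lemma / compactness argument on the $m$-ary tree $F_m^+$: given $x\in Z$ and a sequence $w_k$ with $|w_k|\to\infty$ and $\lambda_{w_k}(x)<\alpha-\varepsilon$, extract an infinite branch $\omega\in\Sigma_m^+$ of the prefix tree of $\{w_k\}$ along which infinitely many prefixes of $\omega$ are themselves elements of $\{w_k\}$, so that $\underline{\lambda}_\omega(x)\leq\alpha-\varepsilon$, contradicting $x\in\mathcal{A}([\alpha,\beta])$; the symmetric argument handles the upper bound with $\beta$. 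All remaining steps reduce to routine manipulations of the C-P quantities.
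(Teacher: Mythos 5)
Your argument mirrors the paper's almost line for line: you stratify $Z$ by the same family $Z_N^\varepsilon$ (the paper writes it $Z_k$ with $(\alpha-\varepsilon,\beta+\varepsilon)$ open), apply the same $S_{w'}\Phi(x)\in[(\alpha-\varepsilon)|w'|,(\beta+\varepsilon)|w'|]$ bound inside $M'_w$ for the centered-Bowen-ball quantity, pass to $m'$, $P'_{Z_N^\varepsilon}$, $P_{Z_N^\varepsilon}$, take the supremum via Proposition~\ref{3.1p}(3), and let $\varepsilon\downarrow 0$. Parts (2) and (3) are handled exactly as in the paper (pressure at $t=0$ equals $h_Z(G)$, sandwich, monotonicity, IVT). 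So the route is the same.

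You are right to single out the identity $Z=\bigcup_N Z_N^\varepsilon$ as the only non-routine step; the paper simply asserts ``notice that $Z=\cup_{k\geq 1}Z_k$'' with no justification, and this really is the delicate point in the free-semigroup setting (for $m=1$ it is trivial, since $\Sigma_1^+$ is a singleton and every long word is automatically a prefix of the unique $\omega$). However, your proposed K\"onig's-lemma fix does not work as stated. K\"onig's lemma, applied to the (downward closure of the) set $T$ of long words with $\lambda_w(x)\leq\alpha-\varepsilon$, produces an infinite branch $\omega$ whose every prefix lies in the downward closure of $T$; it does \emph{not} produce a branch whose prefixes lie in $T$ itself infinitely often. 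A concrete obstruction: take $T=\{1^{k-1}0^k : k\geq 1\}\subset F_2^+$. Then $T$ is infinite and its lengths are unbounded, but the unique infinite branch of its downward closure is $1^\infty$, which contains no element of $T$ at all. So from ``there exist arbitrarily long bad words'' one cannot conclude via tree-compactness alone that some single $\omega$ accumulates bad prefixes, i.e.\ that $\underline{\lambda}_\omega(x)\leq\alpha-\varepsilon$. Any correct justification of the decomposition would have to use more than the tree structure — presumably the bounded-increment/additivity of $S_w\Phi$ (each extension by one symbol changes $S_w\Phi(x)$ by at most $\max_i\|\log a_i\|_\infty$) — but neither the paper nor your proposal supplies such an argument. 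In short: you correctly isolate the gap, but the patch you sketch would fail; you should either prove the stratification identity using the quantitative structure of $S_w\Phi$, or flag it as an unproved lemma.
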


\begin{proof}
(1) Given arbitrary $\varepsilon >0$ and $k \geq 1$, let
\[
Z_{k}=\Big\{x \in Z: \frac{1}{|w|}S_{w}\Phi(x) \in (\alpha-\varepsilon,\beta+\varepsilon), ~for~any~|w|\geq k\Big\},
\]
and notice that $Z= \cup_{k=1}^{\infty} Z_{k}$. Now fix $t \in \mathbb{R},~h > 0$, $w=i_{1}i_{2}\cdots i_{N} \in F_{m}^{+}$ and $N \geq k$. It follows that for any $\delta >0$, $s \in \mathbb{R}$,
\begin{align*}
M'_{w}(Z_{k},s,-(t+&h)\Phi,\delta,N)\\
&=\inf\limits_{\mathcal{G}_{w} }\left\{\sum\limits_{B_{w'}(x,\delta)\in\mathcal{G}_{w} }\exp \bigg( -s \cdot |w'|-(t+h)S_ {w'} \Phi(x) \bigg)\right\}  \\
&\leq  \inf\limits_{\mathcal{G}_{w} }\left\{\sum\limits_{B_{w'}(x,\delta)\in\mathcal{G}_{w} }\exp \bigg(-s \cdot |w'|-tS_ {w'} \Phi(x)- h |w'| (\alpha-\varepsilon)\bigg)\right\} \\
&=\inf\limits_{\mathcal{G}_{w} }\left\{\sum\limits_{B_{w'}(x,\delta)\in\mathcal{G}_{w} }\exp \bigg(-(s+h(\alpha-\varepsilon))\cdot |w'|-tS_ {w'} \Phi(x)\bigg)\right\} \\
&=M'_{w}(Z_{k},s+h(\alpha-\varepsilon),-t\Phi,\delta,N),
\end{align*}
where $\mathcal{G}_{w} \subset \mathcal{F}$ covers $Z_{k}$.
It follows that
\[
M'(Z_{k},s,-(t+h)\Phi,\delta,N) \leq M'(Z_{k},s+h(\alpha-\varepsilon),-t\Phi,\delta,N)
\]
and then
\[
m'(Z_{k},s,-(t+h)\Phi,\delta) \leq m'(Z_{k},s+h(\alpha-\varepsilon),-t\Phi,\delta).
\]
Then we have
\[
P'_{Z_{k}}(G,-(t+h)\Phi,\delta) \leq P'_{Z_{k}}(G,-t\Phi,\delta)-h(\alpha-\varepsilon).
\]
Letting $\delta \rightarrow 0$, it follows that
\[
P_{Z_{k}}(G,-(t+h)\Phi) \leq P_{Z_{k}}(G,-t\Phi)-h(\alpha-\varepsilon).
\]
Using the Proposition \ref{3.1p} 
and taking the supremum over all $k\geq 1$, we can get
\[
P_{Z}(G,-(t+h)\Phi) \leq P_{Z}(G,-t\Phi)-h(\alpha-\varepsilon);
\]
since $\varepsilon > 0$ is arbitrary, this establishes the right of inequality (\ref{7.1}).

Using the similar computation, we obtain
\begin{align*}
M'_{w}(Z_{k},s,-(t+h)\Phi,\delta,N)
 \geq M'_{w}(Z_{k},s+h(\beta+\varepsilon),-t\Phi,\delta,N).
\end{align*}
It follows that
\begin{align*}
M'(Z_{k},s,-(t+h)\Phi,\delta,N)
 \geq M'(Z_{k},s+h(\beta+\varepsilon),-t\Phi,\delta,N).
\end{align*}
and then
\begin{align*}
m'(Z_{k},s,-(t+h)\Phi,\delta)
 \geq m'(Z_{k},s+h(\beta+\varepsilon),-t\Phi,\delta).
\end{align*}
Hence,
\[
P'_{Z_{k}}(G,-(t+h)\Phi,\delta) \geq P'_{Z_{k}}(G,-t\Phi,\delta)-h(\beta+\varepsilon).
\]
Letting $\delta \rightarrow 0$, it follows that
\[
P'_{Z_{k}}(G,-(t+h)\Phi) \geq P'_{Z_{k}}(G,-t\Phi)-h(\beta+\varepsilon).
\]
Take the supremum over all $k\geq 1$, and by the Proposition \ref{3.1p} we can get
\[
P_{Z}(G,-(t+h)\Phi) \leq P_{Z}(G,-t\Phi)-h(\alpha-\varepsilon);
\]
since $\varepsilon > 0$ is arbitrary, this establishes the left of inequality (\ref{7.1}).
This complete the proof of the inequality (\ref{7.1}).

(2) We observe that the map $t \mapsto P_{Z}(G,-t\Phi)$ is continuous and strictly decreasing. First applied the left of (\ref{7.1}) with $t=0$ and $h=\frac{h_{Z}(G)}{\beta}$, we have
\[
P_{Z}(G,-\frac{h_{Z}(G)}{\beta}\Phi)\geq P_{Z}(G,0)-h_{Z}(G)=0;
\]
and second  applied the right of (\ref{7.1}) with $t=0$ and $h=\frac{h_{Z}(G)}{\alpha}$, we have
\[
P_{Z}(G,-\frac{h_{Z}(G)}{\alpha}\Phi) \leq P_{Z}(G,0)-h_{Z}(G)=0.
\]
Thus it get the desired result by Intermediate Value Theorem.

(3) It follows from (2) immediately.
\end{proof}

Similar to \cite{Climenhaga}, we also have the following proposition. The proof of this proposition is similar to that of \cite{Climenhaga}. Therefore, we omit the proof.

\begin{proposition}
Let $f_{i}: X\longrightarrow X$ be as in Theorem 2.1 and suppose that for any $\omega \in \Sigma_{m}^{+}, ~\lambda_{\omega}(x)$ exists and is positive. Then $x \in \mathcal{B}$ .
\end{proposition}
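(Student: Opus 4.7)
My plan is to adapt Climenhaga's single-map argument \cite{Climenhaga} to the free-semigroup setting. Write $b_n(\omega) := S_{\omega|_{[0,n-1]}}\Phi(x)$, which depends only on the finite word $\omega|_{[0,n-1]}$. The tempered condition (\ref{7.2}) is then equivalent to asking, for every $\varepsilon > 0$, that there exist a finite constant $C = C(\varepsilon)$ with
\[
b_n(\omega) - b_k(\omega) + n\varepsilon \ge -C
\]
for all $\omega \in \Sigma_m^+$ and all $0 \le k \le n$; setting $B_n(\omega) := \max_{0 \le k \le n} b_k(\omega)$, this is the same as $B_n(\omega) - b_n(\omega) \le n\varepsilon + C$. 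Since each $a_i$ is continuous and strictly positive on the compact space $X$, the quantity $M := \max_i \|\log a_i\|_\infty$ is finite and $|b_n(\omega)| \le nM$ for every $\omega$ and $n$.

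For fixed $\omega$, the hypothesis $\lambda_\omega(x) > 0$ yields $N_\omega$ such that $|b_n(\omega)/n - \lambda_\omega(x)| < \varepsilon/4$ whenever $n \ge N_\omega$. A direct estimate then gives $B_n(\omega) - b_n(\omega) \le n\varepsilon/2$ for $n \ge N_\omega$, while the trivial bound $B_n(\omega) - b_n(\omega) \le 2 N_\omega M$ covers $n < N_\omega$; hence $B_n(\omega) - b_n(\omega) \le n\varepsilon + C_\omega$ for the per-$\omega$ constant $C_\omega := 2 N_\omega M$. This is exactly Climenhaga's argument applied to the sequence of Birkhoff-type sums $\{b_n(\omega)\}_{n \ge 0}$ along the single itinerary $\omega$.

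The main obstacle is upgrading $C_\omega$ to a uniform $C = \sup_\omega C_\omega < \infty$, since the hypothesis provides the positivity of $\lambda_\omega(x)$ only pointwise in $\omega$. My proposed route is an argument by contradiction exploiting the compactness of $\Sigma_m^+$: if $\sup_\omega C_\omega = \infty$ for some $\varepsilon$, then there exist sequences $\omega_\ell \in \Sigma_m^+$ and $n_\ell \ge k_\ell \ge 0$ with $b_{n_\ell}(\omega_\ell) - b_{k_\ell}(\omega_\ell) + n_\ell \varepsilon \to -\infty$; from $|b_n| \le nM$ one forces $n_\ell \to \infty$, and by compactness one may pass to a subsequence with $\omega_\ell \to \omega^*$. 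I would then split into the cases where the agreement length of $\omega_\ell$ with $\omega^*$ eventually dominates $n_\ell$ or not: in the easy case the expression coincides with the analogous one at $\omega^*$ and the pointwise bound $C_{\omega^*}$ precludes divergence, while in the harder case one invokes the hypothesis applied to suitable tail-shifted sequences to control the diverging portion of $\omega_\ell$. Carrying out this uniformity step is the technical heart of the proof and is the free-semigroup analogue of the compactness argument underlying \cite[Theorem~2.4]{Climenhaga}.
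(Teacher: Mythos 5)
Your per-$\omega$ step is fine: it is exactly Climenhaga's one-map computation run along a fixed itinerary $\omega$, and you correctly identify that it only yields an $\omega$-dependent constant $C_\omega$, whereas membership in $\mathcal{B}$ requires the infimum in \eqref{7.2} to be finite over \emph{all} pairs $(w,w')$, i.e.\ uniformly over all $\omega\in\Sigma_m^+$ simultaneously. That uniformity is genuinely the crux, and it has no counterpart in Climenhaga's single-map proof (there is only one itinerary there, so no issue arises); your appeal to "the compactness argument underlying [Climenhaga, Thm.~2.4]" is therefore misplaced. More importantly, the compactness sketch you outline does not close the gap, and the case you label "harder" is precisely where it fails.

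Concretely, suppose $\omega_\ell\to\omega^*$ with first-disagreement index $L_\ell\to\infty$, and $n_\ell>L_\ell$, $k_\ell\leq L_\ell$. Splitting at $L_\ell$,
\[
b_{n_\ell}(\omega_\ell)-b_{k_\ell}(\omega_\ell)=\bigl[b_{L_\ell}(\omega^*)-b_{k_\ell}(\omega^*)\bigr]+\bigl[b_{n_\ell}(\omega_\ell)-b_{L_\ell}(\omega_\ell)\bigr],
\]
the first bracket is controlled by $C_{\omega^*}$ as you say, but the second is a Birkhoff-type sum of length $n_\ell-L_\ell$ along the tail $\sigma^{L_\ell}\omega_\ell$ started at the orbit point $y_{L_\ell}=f_{\overline{\omega^*|_{[0,L_\ell-1]}}}(x)$. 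The hypothesis $\lambda_\omega(x)>0$, read through concatenations $\omega=w\tau$, does give that the asymptotic average of such tail sums is positive for each fixed $w$ and $\tau$ — but the rate of convergence and the resulting constant still depend on $(w,\tau)$, and here both the prefix $\omega^*|_{[0,L_\ell-1]}$ and the tail $\sigma^{L_\ell}\omega_\ell$ move with $\ell$. "Invoking the hypothesis on suitably tail-shifted sequences" therefore supplies no uniform bound; that is exactly the uniformity you were missing in the first place, reintroduced one level down. In fact the function $\omega\mapsto\inf_{n,k\leq n}\{b_n(\omega)-b_k(\omega)+n\varepsilon\}$ is merely upper semicontinuous (an infimum of locally constant functions), and upper semicontinuity together with everywhere-finiteness on a compact space does \emph{not} imply a uniform lower bound, so compactness alone cannot finish the argument without additional structure that you do not supply. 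Your write-up explicitly defers this step, so the proof is an incomplete sketch stopping exactly at the new difficulty. (For the record, the paper itself gives no proof here — it asserts the argument is "similar to that of \cite{Climenhaga}" and omits it — so there is no source argument to compare against, but the gap you have left open is real and cannot be filled simply by citing the one-map proof.)
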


Before proving Theorem \ref{7.3t}, similar to \cite{Climenhaga}, we give the following two lemmas.

\begin{lemma}\label{7.1l}
Let $f_{i}: X\longrightarrow X$ be as in Theorem 2.1. Then given any $x \in \mathcal{B}$ and $\varepsilon > 0$, there exists $\delta_{0}=\delta_{0}(\varepsilon)>0$ and $\eta=\eta(x,\varepsilon)>0$ such that for each $w \in F_{m}^{+}$ and $0 < \delta < \delta_{0}$,
\begin{equation}\label{7.3}
B(x,\eta\delta e^{-|w|(\lambda_{w}(x)+\varepsilon)}) \subset B_{w}(x,\delta) \subset B(x,\delta e^{-|w|(\lambda_{w}(x)-\varepsilon)}).
\end{equation}
\end{lemma}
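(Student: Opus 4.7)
The plan is to establish the two inclusions separately by iterating a conformal estimate along the $w$-orbit of $y$, with the tempered contraction condition (\ref{7.2}) providing the uniform control needed for the inward inclusion. The preliminary ingredient I would record first is a \emph{uniform} version of conformality: compactness of $X$ together with continuity of each $a_i$ lets one choose, for every $\gamma > 0$, a radius $\delta_0 = \delta_0(\gamma) > 0$ such that
\[
e^{\log a_i(u) - \gamma}\, d(u,v) \;\leq\; d(f_i(u), f_i(v)) \;\leq\; e^{\log a_i(u) + \gamma}\, d(u,v)
\]
for every $i \in \{0, 1, \ldots, m-1\}$ and every $u, v \in X$ with $d(u,v) < \delta_0$.

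For the outward inclusion $B_w(x,\delta) \subset B(x, \delta e^{-|w|(\lambda_w(x) - \varepsilon)})$ I would take $\gamma = \varepsilon$ and write $x_k = f_{i_k \cdots i_1}(x)$, $y_k = f_{i_k \cdots i_1}(y)$ for $w = i_1 i_2 \cdots i_n$. If $y \in B_w(x,\delta)$ and $\delta < \delta_0(\varepsilon)$, then $d(x_{k-1}, y_{k-1}) \leq \delta < \delta_0$ at every step, so the lower conformal estimate iterates cleanly to give $d(x_n, y_n) \geq \exp\bigl(S_w\Phi(x) - |w|\varepsilon\bigr)\, d(x,y)$; combined with $d(x_n, y_n) \leq \delta$, this is exactly the desired bound $d(x,y) \leq \delta e^{-|w|(\lambda_w(x) - \varepsilon)}$.

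For the inward inclusion I would take $\gamma = \varepsilon/2$ in the uniform conformality and invoke (\ref{7.2}) with parameter $\varepsilon/2$ to produce a constant $C = C(x,\varepsilon) > -\infty$ with $S_w\Phi(x) - S_{w'}\Phi(x) + |w|\varepsilon/2 \geq C$ for every $w \in F_m^+$ and every prefix $w'$ of $w$. Setting $\eta := e^C$, shrinking $\delta_0$ if necessary, and taking $y$ with $d(x,y) < \eta\delta e^{-|w|(\lambda_w(x) + \varepsilon)}$, the case $w' = \emptyset$ of the tempered contraction already forces $d(x,y) < \delta$, giving a valid base case. Assuming inductively that $d(x_{j-1}, y_{j-1}) < \delta_0$ for every $j \leq k$, I would iterate the upper conformal estimate to get $d(x_k, y_k) \leq e^{S_{w_k}\Phi(x) + k\varepsilon/2}\, d(x,y)$; plugging in the hypothesis on $d(x,y)$ and then applying (\ref{7.2}) to the pair $(w, w_k)$ collapses the exponent and gives $d(x_k, y_k) < \delta e^{(k-|w|)\varepsilon/2} \leq \delta$, which simultaneously shows $y \in B_w(x,\delta)$ and preserves the inductive hypothesis.

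The main obstacle is exactly the self-consistency of this induction: the conformal estimate is only valid while the orbit of $y$ remains inside the zone of radius $\delta_0$, so one cannot merely control the endpoint but must control $\max_{0 \leq k \leq |w|}(S_{w_k}\Phi(x) + k\varepsilon/2)$ in terms of $S_w\Phi(x) + |w|\varepsilon$. This is precisely the role of the tempered contraction condition, and the factor $1/2$ in the choice $\gamma = \varepsilon/2$ is what creates the slack needed to absorb the $(|w| - k)\varepsilon/2$ gap between what (\ref{7.2}) provides and what the induction consumes at step $k$. Without this control an intermediate $d(x_k, y_k)$ could escape $\delta_0$, breaking the inductive step even when both $d(x,y)$ and the final $d(x_n, y_n)$ are tiny.
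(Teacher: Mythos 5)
Your proposal is correct and follows essentially the same line as the paper's own proof. Both proofs rely on (i) a uniform conformal estimate obtained from the uniform continuity of the continuous extension of $(u,v)\mapsto\log d(f_i(u),f_i(v))-\log d(u,v)$ on the compact $X\times X$, (ii) iterating that estimate step-by-step along the $w$-orbit for the outward inclusion, and (iii) using the tempered contraction condition to produce the uniform constant $\eta$ that makes the inward inclusion hold at a single scale. The one difference is cosmetic: the paper first proves the intermediate inclusion (\ref{7.5}) with a minimum over all prefixes $w'$, then bounds the ratio $e^{-|w|(\lambda_w(x)+2\varepsilon)}/e^{-|w'|(\lambda_{w'}(x)+\varepsilon)}$ via (\ref{7.2}) to replace the minimum by a single $\eta$-term, and finally relabels $\varepsilon\mapsto\varepsilon/2$; you instead build the $\varepsilon/2$-slack into the conformal estimate from the start and run the induction directly against $\eta\delta e^{-|w|(\lambda_w(x)+\varepsilon)}$, which avoids the final relabeling and makes the self-consistency of the induction (the orbit staying in the zone where the conformal estimate is valid) explicit — a point the paper leaves somewhat implicit in the "continuing in this method" step leading to (\ref{7.5}). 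Your verification that $d(x_k,y_k)<\delta e^{(k-|w|)\varepsilon/2}\leq\delta$ checks out.
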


\begin{proof}
Since $f_{i}$ is conformal with factor $a_{i}(x)>0$, for each $i \in \{0,1,2\cdots,m-1\}$ we have
\[
\lim_{y\rightarrow x}\frac{d(f_{i}(x),f_{i}(y))}{d(x,y)}=a_{i}(x).
\]
Since $a_{i}(x)>0$ everywhere, we can take logarithms and get
\[
\lim_{y\rightarrow x}(\log d(f_{i}(x),f_{i}(y))-\log d(x,y))=\log a_{i}(x).
\]
It can be extended the continuous function $\zeta_{i}: X \times X\longrightarrow \mathbb{R}$
\begin{equation*}
\zeta_{i}(x,y)=\begin{cases}
\log d(f_{i}(x),f_{i}(y)-\log d(x,y)  \       \ x\neq y, \\
\log a_{i}(x) \       \ x=y.
\end{cases}
\end{equation*}
Because $X \times X$ is compact, $\zeta_{i}$ is uniformly continuous, $i \in \{0,1,\cdots,m-1\}$. Hence for $\varepsilon >0$, there exists $\delta_{0}=\delta_{0}(\varepsilon)>0$
such that for every $0 < \delta < \delta_{0}$ and $(x,y),(x',y') \in X \times X$ with
\[
(d\times d)((x,y),(x',y')):=d(x,x')+d(y,y')<\delta,
\]
we have $|\zeta_{i}(x,y)-\zeta_{i}(x',y')| <\varepsilon$. In particular, for $x,y \in X$ with $d(x,y)<\delta$, we have $(d\times d)((x,y),(x,x))<\delta$.
Therefore,
\[
|\log d(f_{i}(x),f_{i}(y))-\log d(x,y)-\log a_{i}(x)|=|\zeta_{i}(x,y)-\zeta_{i}(x,x)| < \varepsilon,
\]
that is,
\[
\log d(f_{i}(x),f_{i}(y))-\log a_{i}(x)-\varepsilon < \log d(x,y)< \log d(f_{i}(x),f_{i}(y))-\log a_{i}(x)+\varepsilon ,
\]
and taking exponentials, we obtain
\begin{equation}\label{7.4}
d(f_{i}(x),f_{i}(y))e^{-(\log a_{i}(x)+\varepsilon)} <  d(x,y)<  d(f_{i}(x),f_{i}(y))e^{-(\log a_{i}(x)-\varepsilon)} ,
\end{equation}
whenever the middle quantity is less than $\delta$.
Given $w=i_{1}i_{2}\cdots i_{n} \in F_{m}^{+}$. Now we show the second half of (\ref{7.3}). Let $\Phi=\{\log a_{0},\log a_{1},\cdots, \log a_{m-1}\}$. Suppose $y \in B_{w}(x,\delta)$, then $d(f_{w'}(x),f_{w'}(y)) < \delta$ for all $w' \leq \overline{w}$.
Then repeated application of the second inequality in (\ref{7.4}) yields
\begin{align*}
 d(x,y)&< d(f_{i_{1}}(x),f_{i_{1}}(y)) e^{-(\log a_{i_{1}}(x)-\varepsilon)}\\
 &<d(f_{i_{2}i_{1}}(x),f_{i_{2}i_{1}}(y)) e^{-(\log a_{i_{2}}(f_{i_{1}}(x))-\varepsilon)} e^{-(\log a_{i_{1}}(x)-\varepsilon)}\\
 &<\cdots\\
 &<d(f_{\overline{w}}(x),f_{\overline{w}}(y)) e^{-(S_{w}\Phi(x)-|w|\varepsilon )}\\
 &< \delta e^{-|w|(\lambda_{w}(x)-\varepsilon )}.
\end{align*}
Then
\[
B_{w}(x,\delta) \subset B(x,\delta e^{-|w|(\lambda_{w}(x)-\varepsilon)}).
\]

Now we prove the first inclusion in (\ref{7.3}). We observe that if $d(x,y)<\delta$, then by the first inequality in (\ref{7.4}) we get
\[
d(f_{i_1}(x),f_{i_1}(y))<d(x,y) e^{\log a_{i_{1}}(x)+\varepsilon}.
\]
Then if $d(x,y)<\delta e^{-(\log a_{i_{1}}(x)+\varepsilon)}$, we have $d(f_{i_1}(x),f_{i_1}(y))<\delta$ and so
\begin{align*}
d(f_{i_{2}i_{1}}(x),f_{i_{2}i_{1}}(y))&<d(f_{i_1}(x),f_{i_1}(y))e^{\log a_{i_{2}}(f_{i_{1}}x)+\varepsilon}\\
&<d(x,y)e^{\log a_{i_{2}}(f_{i_{1}}x)+\varepsilon} e^{\log a_{i_{1}}(x)+\varepsilon}.
\end{align*}
Continuing in this method, we can obtain that if
\[
d(x,y)<\delta e^{-|w'|(\lambda_{w'}(x)+\varepsilon)}
\]
for each $\overline{w'}\leq \overline{w}$, we have $d(f_{w'}(x),f_{w'}(y))<\delta$ , and hence $y \in B_{w}(x,\delta)$.
Therefore
\begin{equation}\label{7.5}
B(x,\delta \min_{\overline{w'}\leq \overline{w}}e^{-|w'|(\lambda_{w'}(x)+\varepsilon)}) \subset B_{w}(x,\delta),
\end{equation}
which is almost what we wanted. If the minimum was always achieved at $w'=w$, we would be done; however, this may not be the case. Therefore, now we find what $\eta$ should be for any $\overline{w'}\leq \overline{w}$, and we observe that
\begin{align*}
\frac{e^{-|w|(\lambda_{w}(x)+2\varepsilon)}}{e^{-|w'|(\lambda_{w'}(x)+\varepsilon)}}
&=\frac{e^{-S_{w}\Phi(x)-2|w|\varepsilon}}{e^{-S_{w'}\Phi(x)-|w'|\varepsilon}}\\
&=e^{-\big(S_{w}\Phi(x)-S_{w'}\Phi(x)+2|w|\varepsilon-|w'|\varepsilon\big)}\\
&\leq e^{-\big(S_{w}\Phi(x)-S_{w'}\Phi(x)+|w|\varepsilon\big)}.
\end{align*}
Since $x$ satisfies the tempered contraction condition, there exists $\eta=\eta(x,\varepsilon)>0$ such that
\[
\log \eta  < S_{w}\Phi(x)-S_{w'}\Phi(x)+|w|\varepsilon
\]
for arbitrary $w \in F_{m}^{+},~\overline{w'}\leq \overline{w}$, and hence
\[
e^{-\big(S_{w}\Phi(x)-S_{w'}\Phi(x)+|w|\varepsilon\big)}< \frac{1}{\eta}.
\]
Then for such $w,w'$, we have
\[
\eta e^{-|w|(\lambda_{w}(x)+2\varepsilon)}< e^{-|w'|(\lambda_{w'}(x)+\varepsilon)},
\]
and combining with (\ref{7.5})
\[
B(x,\delta \eta e^{-|w|(\lambda_{w}(x)+2\varepsilon)}) \subset B_{w}(x,\delta).
\]
Taking $\delta_{0}=\delta_{0}(\varepsilon/2)$ gives that desired result.
\end{proof}

\begin{remark}
(1) If $x$ has bounded contraction, then $\eta=\eta(x)$ may be chosen independently of $\varepsilon$.\\
(2) Furthermore, if $a_{i}(x) \geq 1, ~i \in \{0,1,\cdots,m-1\}$ for any $x \in X,$ then $\eta=1$ suffices.
\end{remark}

\begin{lemma}\label{7.2l}
Let $f_{i}: X\longrightarrow X$ satisfy the conditions of Theorem 2.1 and fix $Z \subset \mathcal{A}((\alpha,\infty))\bigcap \mathcal{B},$ where $0<\alpha<\infty$. Let $t^{*}$ be the unique real number with $P_{Z}(G,-t^{*} \Phi )=0$, whose existence and uniqueness is guaranteed by Proposition \ref{7.1p}, where $\Phi=\{\log a_{0},\log a_{1},\cdots,~\log a_{m-1}\}$. Then $dim_{H} Z=t^{*}$.
\end{lemma}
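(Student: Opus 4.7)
The plan is to prove the two inequalities $\dim_H Z\leq t^*$ and $\dim_H Z\geq t^*$ separately. Both use Lemma~\ref{7.1l}, which gives the two-sided inclusion between Bowen balls $B_{w}(x,\delta)$ and metric balls of radii $\eta\delta e^{-|w|(\lambda_{w}(x)+\varepsilon)}$ and $\delta e^{-|w|(\lambda_{w}(x)-\varepsilon)}$, combined with the centred Bowen-ball definition of pressure of Theorem~\ref{4.2t}, in which exponents of the form $e^{-tS_{w'}\Phi(x)}$ are directly comparable to powers of metric diameters. Before splitting into cases I fix $\alpha'\in(\alpha,\infty)$ and $\varepsilon>0$ small, and partition $Z=\bigcup_k Z_k$ so that on each $Z_k$ the constant $\eta(x,\varepsilon)$ of Lemma~\ref{7.1l} is bounded uniformly below in $x$ and $\lambda_{w'}(x)>\alpha'$ for every word $w'$ with $|w'|\geq k$. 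Since $P_Z(G,-t\Phi)=\sup_k P_{Z_k}(G,-t\Phi)$ by Proposition~\ref{3.1p} and Hausdorff dimension is countably stable, it is enough to establish the two inequalities on an arbitrary $Z_k$.

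For $\dim_H Z\leq t^*$, fix $t>s>t^*$, so $P_{Z_k}(G,-s\Phi)<0$ and hence $m'(Z_k,0,-s\Phi,\delta)=0$ for $\delta$ small. For any $\eta>0$ and $N$ large I obtain a word $w$ of length $N$ and a Bowen-ball cover $\mathcal{G}_w$ of $Z_k$ extending $w$ with $\sum_{B_{w'}(x,\delta)\in\mathcal{G}_w}e^{-sS_{w'}\Phi(x)}<\eta$. The second inclusion of Lemma~\ref{7.1l} bounds $(\operatorname{diam}B_{w'}(x,\delta))^t$ by $(2\delta)^t e^{-tS_{w'}\Phi(x)+t|w'|\varepsilon}$; writing $-t=-s-(t-s)$, using $S_{w'}\Phi(x)\geq|w'|\alpha'$ on $Z_k$, and choosing $\varepsilon$ with $(t-s)\alpha'>t\varepsilon$, the total sum of $t$-diameters is at most $(2\delta)^t\eta$. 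Since the covering diameters go to $0$ uniformly as $N\to\infty$, sending $\eta\to 0$ yields $m_H(Z_k,t)=0$ and hence $\dim_H Z\leq t^*$.

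For $\dim_H Z\geq t^*$, fix $t<t^*$ and pick $k_0$ and $s>0$ with $s<P_{Z_{k_0}}(G,-t\Phi)$, so that $M'(Z_{k_0},s,-t\Phi,\delta,N)\to\infty$ as $N\to\infty$. By the ball-cover version of Hausdorff dimension from \cite{Climenhaga1} it is enough to bound $\sum_i r_i^t$ from below over arbitrary covers $\{B(y_i,r_i)\}$ of $Z_{k_0}$ with uniformly small $r_i$. For each base word $w$ with $|w|=N$ I fix an extension $\omega_w\in\Sigma_m^+$ of $w$, pick $x_i\in Z_{k_0}\cap B(y_i,r_i)$ for each $i$, and set $w_{x_i}=\omega_w|_{[0,n_i-1]}$ with $n_i$ the largest integer for which $B(x_i,2r_i)\subset B_{\omega_w|_{[0,n_i-1]}}(x_i,\delta)$; this is well defined by the first inclusion in Lemma~\ref{7.1l} and $n_i\geq N$ once $r_i<\delta e^{-N\alpha'}$. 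Maximality of $n_i$ and the bounded fluctuation of $S_{w}\Phi$ under a one-symbol extension then yield $e^{-s|w_{x_i}|-tS_{w_{x_i}}\Phi(x_i)}\leq C r_i^t$ (provided $t\varepsilon<s$) with $C$ independent of $w$. Summing over $i$ gives $M'_w(Z_{k_0},s,-t\Phi,\delta,N)\leq C\sum_i r_i^t$, so averaging over $|w|=N$ gives $M'(Z_{k_0},s,-t\Phi,\delta,N)\leq C\sum_i r_i^t$; sending $r\to 0$ (hence $N\to\infty$) forces $\sum_i r_i^t=\infty$, which gives $m_H(Z_{k_0},t)=\infty$ and $\dim_H Z\geq t^*$.

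The main obstacle is in the lower-bound construction: the pressure definition restricts to Bowen-ball covers whose labels all extend a common prefix $w$ of length $N$, whereas the natural metric-to-Bowen translation produces labels $w_{x_i}$ depending on the point $x_i$ and on the auxiliary sequence $\omega_w$. Carrying out the construction coherently for every base word $w$ of length $N$ and keeping the constant $C$ uniform in $w$, so that the averaging over $|w|=N$ yields a useful lower bound on $\sum_i r_i^t$, is the technical heart of the argument. A secondary difficulty is making $\lambda_{w'}(x)>\alpha'$ uniform in all long $w'$ and all $x\in Z_k$ simultaneously, which must be built into the partition in a way compatible with the quantifier over $\omega\in\Sigma_m^+$ implicit in $\underline{\lambda}_\omega(x)>\alpha$.
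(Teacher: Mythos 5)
Your proposal follows essentially the same route as the paper's own proof: decompose $Z$ into sets $Z_k$ on which both the tempered-contraction constant $\eta$ of Lemma~\ref{7.1l} and the finite-word Lyapunov exponents are uniform, use the second inclusion of Lemma~\ref{7.1l} to control diameters of Bowen balls from above for the upper bound $\dim_H Z \le t^*$, and for the lower bound $\dim_H Z\ge t^*$ translate a metric-ball cover into a Bowen-ball cover along a chosen extension $\omega_w$ of each length-$N$ word $w$, use the first inclusion and a near-maximality/one-step-drop argument to compare radii with $e^{-S_{w_{x_i}}\Phi(x_i)}$, and average over $w$ to bound $M'(Z_{k_0},s,-t\Phi,\delta,N)$ by $C\sum_i r_i^t$. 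Your identification of the technical heart (keeping the constant uniform in $w$ so that the averaged quantity $M'$ is controlled) is exactly where the paper's proof puts its effort, and the constant $C$ you produce does depend only on $\beta=\max_i\sup_x\log a_i(x)$, $\delta$, $k_0$, $t$, $\varepsilon$, not on $w$, so the averaging step goes through.

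Two small corrections to the writeup, neither of which changes the overall argument. First, fixing $\alpha'\in(\alpha,\infty)$ strictly larger than $\alpha$ and requiring $\lambda_{w'}(x)>\alpha'$ on $Z_k$ is too strong: $Z\subset\mathcal{A}((\alpha,\infty))$ only guarantees $\underline{\lambda}_\omega(x)>\alpha$, so a point with $\underline{\lambda}_\omega(x)$ arbitrarily close to $\alpha$ would escape every $Z_k$. You should use $\alpha$ itself (as the paper's $Z_k=\{x\in Z:\lambda_w(x)>\alpha\ \text{for all}\ |w|\ge k\}$ does); for the arithmetic in the upper bound, $(t-s)\alpha>t\varepsilon$ for $\varepsilon$ small still works since $\alpha>0$. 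Second, the threshold guaranteeing $n_i\ge N$ should involve $\beta$, not $\alpha'$: since the first inclusion of Lemma~\ref{7.1l} gives $B(x_i,e^{-k_0}\delta e^{-N(\lambda_{\omega_w|_{[0,N-1]}}(x_i)+\varepsilon)})\subset B_{\omega_w|_{[0,N-1]}}(x_i,\delta)$ and $\lambda_{w'}(x_i)\le\beta$, the sufficient condition is of the form $r_i\le\tfrac12 e^{-k_0}\delta e^{-N(\beta+\varepsilon)}$. The lower Lyapunov bound $\alpha$ plays no role here; it is the upper bound $\beta$ that determines how small $r_i$ must be to force $n_i\ge N$.
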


\begin{proof}
First we prove $dim_{H} Z \leq t^{*}$. Given $k \geq 1$, and let
\[
Z_{k}=\{x \in Z: \lambda_{w}(x)> \alpha~~for~all~|w|\geq k\}
\]
and then $Z=\bigcup_{k=1}^{\infty} Z_{k}$. Consider $t > t^{*}$, and naturally $P_{Z}(G,-t \Phi)< 0$. Thus there exists $\varepsilon >0$ with $P_{Z}(G,-t \Phi )<-t\varepsilon$ and by Lemma \ref{7.1l} there exists $\delta_{0}=\delta_{0}(\varepsilon)$ such that for all $x \in Z_{k},~0<\delta<\delta_{0}$, fixing $w \in F^{+}_{m}$ and $|w|\geq k$, we have
\begin{equation}\label{7.6}
diam B_{w} (x,\delta)\leq 2 \delta e^{-|w|(\lambda_{w}(x)-\varepsilon)} \leq 2 \delta e^{-|w|(\alpha-\varepsilon)}.
\end{equation}
Given $N\geq k$ and $0< \delta < \delta_{0}$, we have
\begin{align*}
M'_{w}(Z_{k},-t\varepsilon,-t \Phi,\delta,N)
&=\inf\limits_{\mathcal{G}_{w} }\left\{\sum\limits_{B_{w'}(x,\delta)\in\mathcal{G}_{w} }\exp \bigg(-(-t\varepsilon) \cdot |w'|-t S_ {w'} \Phi(x)\bigg)\right\}\\
&=\inf\limits_{\mathcal{G}_{w} }\left\{\sum\limits_{B_{w'}(x,\delta)\in\mathcal{G}_{w} }\exp \bigg(-t \cdot |w'|(\lambda_{w'}(x)-\varepsilon)\bigg)\right\}\\
&\geq \inf\limits_{\mathcal{G}_{w}}\left\{\sum\limits_{B_{w'}(x,\delta)\in\mathcal{G}_{w}} (\frac{1}{2\delta} diam B_{w'}(x,\delta))^{t}\right\} \\
&\geq \inf\limits_{\mathcal{D}(Z_{k},2\delta e^{-N(\alpha-\varepsilon)})} \left\{\sum_{U_{i} \in \mathcal{D}(Z_{k},2\delta e^{-N(\alpha-\varepsilon)})} (\frac{1}{2\delta} diam U_{i})^{t}\right\} \\
&=(2\delta)^{-t} m_{H} (Z_{k},t,2\delta e^{-N(\alpha-\varepsilon)}), \\
\end{align*}
where $\mathcal{G}_{w} \subset \mathcal{F}$ covers $Z_{k}$ and $\mathcal{D}(Z_{k},2\delta e^{-N(\alpha-\varepsilon)})$ denotes the collection of open covers $\{ U_{i}\}$ of $Z_{k}$ for which $diam U_{i} < 2\delta e^{-N(\alpha-\varepsilon)}$ for all $i$.\\
It follows that
\begin{align*}
 M'(Z_{k},-t\varepsilon,-t \Phi,\delta,N)&=\frac{1}{m^{N}} \sum_{|w|=N} M'_{w}(Z_{k},-t\varepsilon,-t \Phi,\delta,N)\\
 &\geq (2\delta)^{-t} m_{H} (Z_{k},t,2\delta e^{-N(\alpha-\varepsilon)}) .
\end{align*}
Taking the limit as $N \rightarrow \infty$ gives
\begin{equation}\label{7.7}
m'(Z_{k},-t\varepsilon,-t \Phi,\delta) \geq (2\delta)^{-t} m_{H}(Z_{k},t).
\end{equation}
Thus
\[
-t\varepsilon > P_{Z}(G,-t \Phi )\geq P_{Z_k}(G,-t \Phi)=\lim_{\delta \rightarrow 0} P'_{Z_k}(G,-t \Phi, \delta),
\]
and for sufficiently small $\delta >0$, we have $-t \varepsilon > P'_{Z_k}(G,-t \Phi,\delta )$.
Hence $ H^{t}(Z_{k})=0$ by (\ref{7.7}), which implies $t \geq dim_{H}(Z_{k})$. Then taking the union over all $k$ gives $dim_{H}(Z) \leq  t $ for all $t>t^{*}$. Therefore $dim_{H}(Z) \leq t^{*}$.

Now we prove the other inequality, $dim_{H}(Z) \geq t^{*}$. Fix $t <t^{*}$, and then we show that $t\leq dim_{H}(Z)$. Suppose $t>0$, and by the Proposition \ref{7.1p} $t^{*}$ is the unique real number such that $P_{Z}(G,-t^{*} \Phi)=0$. Since the pressure function $P_{Z}(G,-t\Phi)$ is decreasing, we have $P_{Z}(G,-t \Phi)>0$. Hence we can choose $\varepsilon >0$ such that
\[
P_{Z}(G,-t\Phi )>t\varepsilon > 0.
\]
Set $\delta_{0}=\delta_{0}(\varepsilon)$ be as in Lemma \ref{7.1}. Given $k\in \mathbb{N},~k\geq 1$, consider the set
\[
Z_{k}=\{x \in Z: (\ref{7.3})~holds~with~\eta=e^{-k}~for~all~w \in F_{m}^{+} ~and ~0<\delta<\delta_{0}\}.
\]
Obviously, $Z=\bigcup\limits_{k=1}^{\infty} Z_{k}$ and hence $P_{Z}(G,-t\Phi)=\sup\limits_{k\geq 1} P_{Z_k}(G,-t \Phi)$. Then there exists $k\in \mathbb{N}$ with $t\varepsilon<P_{Z_k}(G,-t\Phi)$, and we fix $0<\delta<\delta_{0}$ such that
\begin{equation}\label{7.8}
t\varepsilon<P'_{Z_k}(G,-t\Phi,\delta).
\end{equation}
Let $\beta=\max_{i} \{\sup_{x \in X}\log a_{i}(x)\}<\infty$. For any $w=i_{1}i_{2}\cdots i_{n} \in F_{m}^{+},~x\in X$, denote $s_{w}(x)=e^{-k} \delta e^{-|w|(\lambda_{w}(x)+\varepsilon)}$, and notice that for any $i_{n+1} \in \{0,1,\cdots,m-1\}$
\begin{align*}
\frac{s_{w}(x)}{s_{wi_{n+1}}(x)}=\frac{e^{-S_{w} \Phi(x)-|w|\varepsilon}}{e^{-S_{wi_{n+1}} \Phi(x)-(|w|+1)\varepsilon}}=e^{\log a_{i_{n+1}}(f_{\overline{w}}(x))+\varepsilon} \leq e^{\beta+\varepsilon}.
\end{align*}
Moreover, given $x \in Z_{k}$ and $r>0$ enough small and for any $\omega=i_{1}i_{2}\cdots \in \Sigma_{m}^{+}$, there exists $n=n(x,r,\omega) \in \mathbb{N}$ such that for $w= i_{1}i_{2} \cdots i_{n}=\omega|_{[0,n-1]}\in F_{m}^{+}(n)$, we have
\begin{equation}\label{7.9}
s_{w}(x)e^{-(\beta+\varepsilon)}\leq s_{wi_{n+1}}(x) \leq r \leq s_{w}(x)=e^{-k} \delta e^{-|w|(\lambda_{w}(x)+\varepsilon)}.
\end{equation}

Moreover, for every $w' \in F_{m}^{+}$ and $x \in X$, we have $\lambda_{w'}(x) \leq \beta$ and thus $s_{w'}(x) \geq \delta e^{-\big(k+|w'|(\beta+\varepsilon)\big)}$.
It yields from (\ref{7.9}) that for $w=i_{1}i_{2}\cdots i_{n}=\omega|_{[0,n-1]}$, where $n=n(x,r,\omega)$, we have
\[
\delta e^{-\big(k+(n+1)(\beta+\varepsilon)\big)} \leq r,
\]
and thus
\[
n \geq \frac{-\log r +\log \delta -k}{\beta+\varepsilon}-1.
\]
Writing $N:=N(r,\delta)=\frac{-\log r +\log \delta -k}{\beta+\varepsilon}-1$ and observe that for every fixed $\delta>0$, we have
$\lim_{r\rightarrow 0}N(r,\delta)=\infty$.

By Lemma \ref{7.1l} we have for $w=i_{1}i_{2}\cdots i_{n}=\omega|_{[0,n-1]},~n=n(x,r,\omega) \in \mathbb{N}$,
\[
B(x,r)\subset  B_{w}(x,\delta),
\]
then given any $\{B(x_{i},r_{i})\}$ with $Z_{k} \subset \bigcup B(x_{i},r_{i})$, we can get $Z_{k} \subset \bigcup  B_{w_{i}}(x_{i},\delta)$, where $w_{i}=\omega|_{[0,n_{i}-1]},~n_{i}=n_{i}(x_{i},r_{i},\omega)$ satisfies (\ref{7.9}).

 Thus for all $r>0,~0<\delta <\delta_{0}$ and fixed $w=\omega|_{[0,N-1]} \in F^{+}_{m}(N)$, we can get
\begin{align*}
m^{b}_{H}(Z_{k},t,r)
&=\inf_{\mathcal{D}^{b}(Z_{k},r)}\left\{\sum\limits_{B(x_{i},r_{i})\in \mathcal{D}^{b}(Z_{k},r)} (2r_{i})^{t}\right\}\\
&\geq \inf\limits_{\mathcal{G}'_{w} }\left\{\sum\limits_{B_{w_{i}}(x_{i},\delta)\in\mathcal{G}'_{w}}\bigg(2 e^{-(\beta+\varepsilon)}s_{w_{i}}(x_{i})\bigg)^{t}\right\}\\
&=(2\delta)^{t}e^{-t(k+\beta+\varepsilon)} \inf\limits_{\mathcal{G}'_{w} }\left\{\sum\limits_{B_{w_{i}}(x_{i},\delta)\in\mathcal{G}'_{w}}  e^{-|w_{i}|t(\lambda_{w_{i}}(x_{i})+\varepsilon)}\right\} \\
&=(2\delta)^{t}e^{-t(k+\beta+\varepsilon)}  \inf\limits_{\mathcal{G}'_{w} }\left\{\sum\limits_{B_{w_{i}}(x_{i},\delta)\in\mathcal{G}'_{w}}  \exp \bigg(-|w_{i}|t\varepsilon-tS_{w_{i}} \Phi(x_{i})\bigg)\right\} \\
&\geq (2\delta)^{t}e^{-t(k+\beta+\varepsilon)}  \inf\limits_{\mathcal{G}_{w} }\left\{\sum\limits_{B_{w'}(x,\delta)\in\mathcal{G}_{w}}  \exp \bigg(-|w'|t\varepsilon-tS_{w'} \Phi(x)\bigg)\right\} \\
&=(2\delta)^{t}e^{-t(k+\beta+\varepsilon)} M'_{w}(Z_{k},t\varepsilon,-t\Phi,\delta,N) , \\
\end{align*}
where $\mathcal{D}^{b}(Z_{k},r)$ denotes the collection of countable open balls covers $\{B(x_{i},r_{i})\}_{i=1}^{\infty}$ of $Z_{k}$ for which $r_{i} < r$ for all $i$,
$\mathcal{G}'_{w}$ denotes the collection of countable Bowen balls covers $\{B_{w_{i}}(x_{i},\delta)\}_{i=1}^{\infty}$ of $Z_{k}$ for which $~\overline{w}\leq \overline{w_{i}},~w=\omega|_{[0,N-1]}, w_{i}=\omega|_{[0,n_{i}-1]}$  for all $i$
and $\mathcal{G}_{w}$ denotes the collection of countable Bowen balls covers $\{B_{w'}(x,\delta)\}$ of $Z_{k}$ for which $~\overline{w}\leq \overline{w'},~w=\omega|_{[0,N-1]}$.\\
It follows that
\[
m^{b}_{H}(Z_{k},t,r)\geq (2\delta)^{t}e^{-t(k+\beta+\varepsilon)} M'(Z_{k},t\varepsilon,-t\Phi,\delta,N).
\]
Taking the limit as $r\rightarrow 0$, we can obtain the quantity on the right goes to $\infty$ by (\ref{7.8}), and thus we have $m^{b}_{H}(Z_{k},t)=\infty$.
Therefore,
\[
dim_{H}Z \geq dim_{H}Z_{k}\geq t,
\]
and since $t< t^{*}$ was arbitrary, this establishes the Lemma.
\end{proof}

\begin{proof}[ Proof of Theorem 2.1]
Consider a decreasing sequence of positive numbers $\alpha_{k}$ which converge to $0$, and set $Z_{k} \subset \mathcal{A}((\alpha_{k},\infty))\bigcap Z$, so that Lemma \ref{7.2l} applies to $Z_{k}$ and we have $Z=\bigcup_{k=1}^{\infty} Z_{k}$. Let $t_{k}$ be the unique real number with
\[
P_{Z_{k}}(G,-t_{k} \Phi )=0
\]
for every $k$ and whose existence and uniqueness is guaranteed by Proposition \ref{7.1p}. Thus by Lemma \ref{7.2l} we get
\[
dim_{H} Z_{k}=t_{k}.
\]
Denote $t^{*}=\sup_{k} t_{k}$, then $dim_{H} Z=t^{*}$. Then it remains to prove that
\begin{equation}\label{7.10}
t^{*}=\sup\{ t\geq 0: P_{Z}(G,-t \Phi)>0\}.
\end{equation}
Given $t\geq 0$, we have
\[
P_{Z}(G,-t \Phi )=\sup_{k}P_{Z_{k}}(G,-t \Phi ).
\]

First for any $t < t^{*}$, there exists $t_{k}$ with $t<t_{k}$ and hence $P_{Z_{k}}(G,-t \Phi )>0$. Then $t \in \{ t\geq 0: P_{Z}(G,-t \Phi )>0\}$ and thus $t \leq \sup\{ t\geq 0: P_{Z}(G,-t\Phi )>0\}$. Therefore, $t^{*} \leq \sup\{ t\geq 0: P_{Z}(G,-t\Phi )>0\}$.

Next for arbitrary $t < \sup\{ t\geq 0: P_{Z}(G,-t \Phi)>0\}$, there exists $t_{j}>t$ with $P_{Z}(G,-t_{j}\Phi )>0$. Then there exists $Z_{k}$ such that $P_{Z_{k}}(G,-t_{j} \Phi )>0$ and thus $t_{j} < t_{k}$. It follows that $t<t_{j}<t_{k}<t^{*}$.
So  $\sup\{ t\geq 0: P_{Z}(G,-t\Phi )>0\} \leq t^{*}$.
This establishes (\ref{7.10}).

Finally, it follows from (\ref{7.10}) and continuity of $t \mapsto P_{Z}(G, -t \Phi)$ that $P_{Z}(G, -t^{*} \Phi)=0$. If $Z \subset \mathcal{A}((\alpha,\infty))$ for some $\alpha >0$, then Proposition \ref{7.1p} guarantees that $t^{*}$ is in fact the unique root of Bowen's equation.
\end{proof}

\begin{example}
Let $X=[0,1]$ be a closed interval on $\mathbb{R}$. And $G$ is a free semigroup generated by a series of the Manneville-Pomeau maps (for the study of Manneville-Pomeau map, see, for example \cite{Pomeau,Takens}), that is,
\[
f_{i}: X\rightarrow X, f_{i}(x)=x+x^{1+s_{i}}~(mod~1),i=0,1,\cdots,m-1
\]
and $0<s_{0}<s_{1}<\cdots<s_{m-1}<1.$ Then $f_{i}~(i=0,1,\cdots,m-1)$ is continuous conformal and has no critical points and singularities, and $a_{i}(x)\geq 1$ for any $x \in X$. The Manneville-Pomeau map is non-uniformly hyperbolic transformation having the most benign type of non-hyperbolicity: an indifferent fixed point at 0, i.e., $f_{i}(0)=0$, and $a_{i}(0)=1$, and exhibits \emph{intermittent} behavior. Moreover, we can verify $X$, $G=\{ f_{0},f_{1},\cdots,f_{m-1}\}$ and $\Phi=\{\log a_{0},\log a_{1},\cdots,\log a_{m-1}\}$ satisfy the conditions of Theorem \ref{7.3t}. What's more, for any $\omega \in \Sigma^{+}_{m}$
\[
\overline{\lambda}_{\omega}(0)=\underline{\lambda}_{\omega}(0)=\lambda_{\omega}(0)=0
\]
and
\[
\underline{\lambda}_{\omega}(x)>0, ~for ~any~x \in X-\{0\}
\]
thus $\mathcal{B}=X$ and $\mathcal{A}((0,\infty))=X-\{0\}$. Therefore, for any $Z \subset \mathcal{A}((0,\infty))\bigcap \mathcal{B}$, by the Theorem \ref{7.3t} we get
\begin{align*}
dim_{H} Z=t^{*}&=\sup\{ t\geq 0: P_{Z}(G,-t \Phi )>0\}\\
&=\inf \{ t\geq 0: P_{Z}(G,-t \Phi ) \leq 0\}.
\end{align*}
\end{example}



\section{\emph{The proof of Theorem 2.2}}

\begin{proof}[ Proof of Theorem 2.2]
(1) Fix an $\varepsilon> 0$ and for each $ k\geq 1 $. Consider the set
\[
 Z_k=\left\{ x \in Z : \liminf_{n\rightarrow \infty}-\frac{1}{n}\max_{|w|=n}\{\log\mu(B_{w}(x,r))-S_{w}\Phi(x)\}> s-\varepsilon~{\rm for~all}~r \in (0,1/k)\right\}.
 \]
Since $\underline{P}(G, \Phi, x)\ge s$ for all $x\in Z$, the sequence $ \{ Z_k\}_{k=1}^{\infty} $ increases to $ Z $. So by the continuity of the measure, we have
 \[
\lim\limits_{k\rightarrow\infty}\mu(Z_k)=\mu(Z)>0.
 \]
 Then select an integer $ k_0 \geq 1 $ with $ \mu(Z_{k_0})>\frac{1}{2}\mu(Z)$. For each $ N\geq1$, put
\begin{align*}
Z_{k_0,N}=\bigg\{ x \in Z_{k_0} :-\frac{1}{n}\max_{|w|=n}\{\log&\mu(B_{w}(x,r))
-S_{w}\Phi(x)\}> s-\varepsilon \\
&~{\rm for~all}~n \ge N~{\rm and}~r \in (0,1/k_0)\bigg\}.
\end{align*}
Since the sequence $ {\{ Z_{k_0,N}\}}_{N=1}^{\infty} $ increases to $ Z_{k_0}$, we can pick a $ N^{*}\geq 1 $ such that $\mu( Z_{k_0,N^{*}})>\frac{1}{2}\mu( Z_{k_0})$. Write $Z^{*}=Z_{k_0,N^{*}}$ and $ r^{*}=\frac{1}{k_0} $. Then $ \mu(Z^{*}) >0$, for all $x \in Z^{*},~0<r \leq r^{*}$ and $n \ge N^{*}$,
 \begin{equation}
 \max_{|w|=n}\{\log\mu\big (B_{w}(x,r)\big)-S_{w}\Phi(x)\}<  -(s-\varepsilon)\cdot n.
 \end{equation}
 For any $N \geq N^{*}, w \in F_{m}^{+} $ and $|w|= N$, take a cover of $Z^{*}$
 \[
 \mathcal{F}_{w}= \left\{B_{\omega_i|_{[0,n_{i}-1]}}(y_i,\frac{r}{2}) : \omega_i \in \Sigma^+_m~{\rm and}~\omega_i|_{[0,N-1]}=w \right\}
 \]
 such that
\[
 Z^{*}\cap B_{\omega_i|_{[0,n_{i}-1]}}(y_i,\frac{r}{2})\neq\emptyset, n_{i} \geq N ~ {\rm for~all}~ i\geq1 ~{\rm and}~ 0<r \leq r^{*}.
\]
For each $ i $, there exists an $x_i\in Z^{*}\cap B_{\omega_i|_{[0,n_{i}-1]}}(y_i,\frac{r}{2})$. By the triangle inequality
 \[B_{\omega_i|_{[0,n_{i}-1]}}(y_i,\frac{r}{2}) \subset B_{\omega_i|_{[0,n_{i}-1]}}(x_i,r).\]
In combination with (7.1), we can get
\[
\sum_{i\geq1} \exp \Big(-(s-\varepsilon)\cdot n_i +S_{w}\Phi(x_{i})\Big)
 \ge \sum_{i\geq1}  \mu  \big(B_{\omega_i|_{[0,n_{i}-1]}}(x_i,r) \big)
 \ge  \mu(Z^{*})>0.
\]
 Therefore, $M'_{w}(Z^{*},s-\varepsilon,\Phi,r,N) \ge \mu(Z^{*})>0$ for all $N\ge N^{*}$, and we obtain
 \[
 M'(Z^{*},s-\varepsilon,\Phi,r,N)=\frac{1}{m^N}\sum \limits_{|w|=N}M'_{w}(Z^{*},s-\varepsilon,\Phi,r,N) \ge \mu(Z^{*})>0,
 \]
 and consequently
 \[
 m'(Z^{*},s-\varepsilon,\Phi,r)=\lim\limits_{N\rightarrow \infty}M'(Z^{*},s-\varepsilon,\Phi,r,N)>0,
 \]
which in turn implies that $P'_{Z^{*}}(G,\Phi,r)\ge s-\varepsilon$.
Then we have $P_{Z^{*}}(G,\Phi) \ge s-\varepsilon$ by letting $ r \rightarrow 0$. It follows that
 $P_{Z}(G,\Phi)\ge P_{Z^{*}}(G,\Phi)\ge s-\varepsilon$ and hence $P_{Z}(G,\Phi) \ge s$ since $\varepsilon>0$ is arbitrary.

 (2) In order to prove the second result, we need to use the following Lemma.
 \begin{lemma}[\cite{Ju},\cite{Mj} ]
Let $r > 0$ and $\mathcal{B}(r) = \{B_w(x,r): x \in X, w \in F_{m}^{+}\}$. For any family $\mathcal{F}\subset \mathcal{B}(r)$, there exists a (not
necessarily countable) subfamily $\mathcal{G} \subset \mathcal{F}$ consisting of disjoint balls such that
\[
\bigcup_{B \in \mathcal{F}}B \subset \bigcup_{B_w(x,r) \in \mathcal{G}} B_w(x,3r).
\]
\end{lemma}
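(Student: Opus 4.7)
The plan is to model the proof on the classical Vitali $3r$-covering lemma, adapting it to the Bowen-ball framework. I would first extract a maximal pairwise disjoint subfamily $\mathcal{G} \subset \mathcal{F}$ by Zorn's lemma, then use the triangle inequality in the Bowen metric to verify that tripling each element of $\mathcal{G}$ covers the entire union $\bigcup_{B \in \mathcal{F}} B$.

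For the extraction step, let $\mathcal{P}$ denote the collection of all pairwise disjoint subfamilies of $\mathcal{F}$, partially ordered by inclusion. The union of any chain $\{\mathcal{G}_\alpha\} \subset \mathcal{P}$ is again pairwise disjoint, since any two of its members lie in some common $\mathcal{G}_\alpha$. Hence every chain has an upper bound, and Zorn's lemma supplies a maximal element $\mathcal{G}$.

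For the covering step, fix any $B_w(x, r) \in \mathcal{F}$. If $B_w(x, r) \in \mathcal{G}$, then trivially $B_w(x, r) \subset B_w(x, 3r)$. Otherwise, maximality forces $B_w(x, r)$ to meet some $B_{w'}(x', r) \in \mathcal{G}$ at a common point $z$, and for any $y \in B_w(x, r)$ and any subword $v \leq \overline{w'}$, two applications of the triangle inequality give
\[
 d(f_v(x'), f_v(y)) \leq d(f_v(x'), f_v(z)) + d(f_v(z), f_v(x)) + d(f_v(x), f_v(y)),
\]
with each summand at most $r$, so that $y \in B_{w'}(x', 3r)$.

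The main obstacle is that the second and third summands are under control only when $v$ is simultaneously a suffix of $\overline{w}$, whereas membership in $B_{w'}(x', 3r)$ requires the estimate for every $v \leq \overline{w'}$. In the single-map case treated by Ma and Wen this compatibility comes for free by simply keeping the ball of smaller order $n$, but in the free-semigroup setting the words $w$ and $w'$ may be prefix-incomparable. To circumvent this I would refine the extraction inductively on $|w|$: build a nested sequence $\mathcal{G}_1 \subset \mathcal{G}_2 \subset \cdots$, where $\mathcal{G}_n$ is a maximal pairwise disjoint subfamily of $\{B_w(x,r) \in \mathcal{F} : |w| \leq n\}$ containing $\mathcal{G}_{n-1}$, and check that for every $B_w(x,r) \in \mathcal{F} \setminus \mathcal{G}$ the intersecting ball $B_{w'}(x',r) \in \mathcal{G}$ can be arranged so that $w'$ is a prefix of $w$. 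Equivalently, one could lift $\mathcal{F}$ to the skew-product extension on $\Sigma_m^+ \times X$, where each Bowen ball becomes a standard Bowen ball of a single transformation and the classical Vitali lemma applies verbatim. Making either of these routes rigorous is the delicate core of the proof.
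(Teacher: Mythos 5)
Your framing --- a free-semigroup analogue of the Vitali $3r$-covering lemma via a maximal disjoint subfamily plus the triangle inequality --- is the right template, and you have correctly isolated the obstruction: the displayed bound requires every $v\leq\overline{w'}$ to also satisfy $v\leq\overline{w}$, i.e.\ it requires $d_{w'}\leq d_w$, which holds exactly when $w'$ is a prefix of $w$. In the single-map case of Ma--Wen this is automatic once $|w'|\leq|w|$, since the Bowen metrics are totally ordered by $n$; for a free semigroup they are only partially ordered by the prefix relation, and two words of equal length may be incomparable.

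The gap is that neither of your two proposed repairs is actually carried out, and the first does not work in the form you describe. Already at level one of the induction, a ball $B_w(x,r)$ with $|w|=1$ that is excluded from a maximal disjoint family $\mathcal{G}_1$ may only meet balls $B_{w'}(x',r)\in\mathcal{G}_1$ with $|w'|=1$ and $w'\neq w$; then $w'$ is not a prefix of $w$, and nothing in the stratification-by-length construction forces the intersecting ball's word to be a prefix. Length control gives $|w'|\leq|w|$, not prefix compatibility, so the needed monotonicity $d_{w'}\leq d_w$ is simply not available, and the same problem recurs at every level. The skew-product suggestion is also more delicate than asserted: a Bowen ball of $F$ on $\Sigma_m^+\times X$ carries a cylinder constraint in the $\omega$-coordinate, so two intersecting $X$-balls with incomparable words lift to \emph{disjoint} balls upstairs, and the classical Vitali lemma there does not project back to the covering you need below. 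What you call ``the delicate core'' is precisely the missing step; the write-up is an honest outline that names the difficulty, but it is not a proof. (The paper itself cites this lemma to Ju--Ma--Wang and Ma--Wen rather than reproving it; whichever route that proof takes, your attempt does not reconstruct it, and the argument you sketch would need a genuinely new idea to force prefix compatibility of the selected balls.)
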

Since $ \overline{P}(G,\Phi,x)\le s$ for all $ x\in Z $, then for any $ \omega \in \Sigma_{m}^+ $ and $ x \in Z $,
\[
\lim\limits_{r\rightarrow 0}\liminf_{n\rightarrow \infty}-\frac{1}{n}\{\log\mu \big(B_{\omega|_{[0,n-1]}}(x,r) \big)-S_{\omega|_{[0,n-1]}}\Phi(x)\}\leq  \overline{P}(G,\Phi,x)\leq s.
\]
For any $ N \geq 1 $ and $ w=i_1i_2 \ldots i_N \in F_{m}^+$. Fix $\varepsilon> 0$, and put
\begin{align*}
 Z_k=\bigg\{ x \in Z  : &\liminf_{n\rightarrow \infty}-\frac{1}{n}\{\log\mu \big(B_{\omega|_{[0,n-1]}}(x,r) \big)-S_{\omega|_{[0,n-1]}}\Phi(x)\}< s+\varepsilon \\
 &~{\rm for~all}~r \in (0,1/k), \omega \in \Sigma^+_m~{\rm and}~\omega|_{[0,N-1]}=w \bigg\},
 \end{align*}
then we have $Z=\cup_{k \ge 1}Z_k$. Now fix $k \ge 1$ and $ 0<r<\frac{1}{3k} $. For each $x \in Z_k,$ we take $ \omega_{x} \in \Sigma^+_m $ such that $ \omega_{x}|_{[0,N-1]}=w $,    there exists a strictly increasing sequence $\{n_{j}(x)\}_{j=1}^{\infty}$ such that
 \[
\log\mu  \big(B_{\omega_{x}|_{[0,n_{j}(x)-1]}}(x,r) \big)-S_{\omega_{x}|_{[0,n_{j}(x)-1]}}\Phi(x)\ge  -(s+\varepsilon)\cdot n_{j}(x)
\]
for all $j \ge 1$.
So, the set $Z_k$  is contained in the union of the sets in the family
\[
\mathcal{F}_{w}=\left\{B_{\omega_x|_{[0,n_{j}(x)-1]}}(x,r): x \in  Z_k,  \omega_x \in \Sigma^+_m, \omega_x|_{[0,N-1]}=w~{\rm and}~n_{j}(x)\ge N\right\}.
\]
By Lemma 7.1, there exists a sub family $\mathcal{G}_{w}=\{B_{\omega_{x_i}|_{[0,n_{i}-1]}}(x_i,r)\}_{i\in I} \subset \mathcal{F}_{w}$ consisting of disjoint balls such that for all $i \in I$,
\[
Z_k \subset  \bigcup_{i \in I} B_{\omega_{x_i}|_{[0,n_{i}-1]}}(x_i,3r),
\]
and
\[
\mu \big(B_{\omega_{x_{i}}|_{[0,n_{i}-1]}}(x_i,r) \big)\ge \exp  \Big(-(s+\varepsilon)\cdot n_{i} +S_{\omega_{x_{i}}|_{[0,n_{i}-1]}}\Phi(x_{i})\Big).
\]
The index set $I$ is at most countable since $\mu$ is a probability measure and $\mathcal{G}_{w}$ is a disjointed family of sets, each of
which has positive $\mu$-measure. Therefore,
\begin{align*}
M'_{w}(Z_k,s+\varepsilon,\Phi,3r,N)&\le \sum_{i\in I}\exp  \big(-(s+\varepsilon)\cdot n_{i} +S_{\omega_{x_i}|_{[0,n_{i}-1]}}\Phi(x_{i})\big)\\
&\le \sum_{i\in I}\mu  \big(B_{\omega_{x_i}|_{[0,n_{i}-1]}}(x_i,r) \big)\le 1,
\end{align*}
where the disjointness of $\{B_{\omega_{x_i}|_{[0,n_{i}-1]}}(x_i,r)\}_{i\in I}$ is used in the last inequality. It follows that
 \[
 M'(Z_k,s+\varepsilon,\Phi,3r,N)=\frac{1}{m^N}\sum \limits_{|w|=N}M'_{w}(Z_k,s+\varepsilon,\Phi,3r,N)\le 1
 \]
 and consequently
 \[
 m'(Z_k,s+\varepsilon,\Phi,3r)=\lim\limits_{N\rightarrow \infty}M'(Z_k,s+\varepsilon,\Phi,3r,N) \le 1.
 \]
 which in turn implies that $P'_{Z_k}(G,\Phi,3r) \le s+\varepsilon$ for any $0<r<\frac{1}{3k}$. Letting $r \rightarrow 0$ yields
 \[
 P_{Z_k}(G,\Phi) \le s+\varepsilon~{\rm for~any}~k\ge 1.
 \]
By Proposition \ref{3.1p}(3),
 \[
P_{Z}(G,\Phi)=P_{\cup_{k=1}^{\infty}Z_k}(G,\Phi) = \sup_{k\ge 1} \{P_{Z_k}(G,\Phi)\} \le s+\varepsilon.
 \]
Since $\varepsilon > 0$ is arbitrary, then we can get $P_{Z}(G,\Phi)\le s$ .
 \end{proof}

\begin{corollary}\label{6.1t}
Let $\mu$ denote a Borel probability measure on $X$, $Z$ be a Borel subset of $X$ and $ w \in F^{+}_{m},~|w|=n$. Consider the following quantities:
\[
\overline{P}:=\sup_{x \in Z}\lim_{\delta \rightarrow 0} \limsup_{n \rightarrow \infty} -\frac{1}{n} \min_{|w|=n}\left\{ \log \mu (B_{w}(x,\delta))-S_{w}\Phi (x)\right\},
\]
\[
\underline{P}:=\inf_{x \in Z}\lim_{\delta \rightarrow 0} \liminf_{n \rightarrow \infty} -\frac{1}{n} \max_{|w|=n}\left\{ \log \mu (B_{w}(x,\delta))-S_{w}\Phi (x)\right\}.
\]
Then $P_{Z}(G,\Phi) \leq \overline{P}$. If in addition $\mu(Z) > 0$, we have $P_{Z}(G,\Phi) \geq \underline{P}$.
\end{corollary}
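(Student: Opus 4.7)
The plan is to deduce the corollary directly from Theorem~\ref{2.2t} by observing that the scalar quantities $\overline{P}$ and $\underline{P}$ defined here are, up to an irrelevant $\limsup$ versus $\liminf$ switch, just the supremum and infimum over $x \in Z$ of the pointwise quantities $\overline{P}(G,\Phi,x)$ and $\underline{P}(G,\Phi,x)$ that drive Theorem~\ref{2.2t}. No new estimates of Bowen balls or coverings are needed; the work is all in extracting the right pointwise inequality from the global $\sup$/$\inf$ and feeding it into the theorem.

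For the upper bound $P_{Z}(G,\Phi)\le \overline{P}$, I would fix an arbitrary real $s>\overline{P}$ and argue as follows. By the $\sup$ definition, for every $x\in Z$,
\[
\lim_{\delta\to 0}\limsup_{n\to\infty}-\frac{1}{n}\min_{|w|=n}\{\log\mu(B_{w}(x,\delta))-S_{w}\Phi(x)\}<s.
\]
Since $\liminf\le\limsup$, the same strict inequality holds with the outer $\limsup$ replaced by $\liminf$, which is precisely the statement $\overline{P}(G,\Phi,x)\le s$ for every $x\in Z$. Theorem~\ref{2.2t}(2) then yields $P_{Z}(G,\Phi)\le s$, and letting $s\downarrow\overline{P}$ completes this half.

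For the lower bound $P_{Z}(G,\Phi)\ge\underline{P}$ under the assumption $\mu(Z)>0$, I would fix $s<\underline{P}$. By the $\inf$ definition, for every $x\in Z$ we have $\underline{P}(G,\Phi,x)\ge s$, so Theorem~\ref{2.2t}(1) applies (using $\mu(Z)>0$) and gives $P_{Z}(G,\Phi)\ge s$. Sending $s\uparrow\underline{P}$ finishes the argument. The only mild subtlety is the $\limsup$-versus-$\liminf$ discrepancy between the corollary's $\overline{P}$ and the pointwise $\overline{P}(G,\Phi,x)$ used in Theorem~\ref{2.2t}(2); this is resolved trivially by $\liminf\le\limsup$, so I do not expect any substantive obstacle in either direction.
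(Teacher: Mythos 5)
Your proposal is correct and follows essentially the same route as the paper: both reduce the corollary to Theorem~\ref{2.2t} via the pointwise observations $\overline{P}(G,\Phi,x)\le\overline{P}$ (using $\liminf\le\limsup$) and $\underline{P}(G,\Phi,x)\ge\underline{P}$ for all $x\in Z$. The only cosmetic difference is that you pass through an auxiliary parameter $s$ and then let $s\downarrow\overline{P}$ (resp.\ $s\uparrow\underline{P}$), whereas the paper plugs $s=\overline{P}$ (resp.\ $s=\underline{P}$) into Theorem~\ref{2.2t} directly.
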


\begin{proof}
(1) It is easy to verify that $\overline{P}(G,\Phi,x) \leq \overline{P}$ for any $x \in Z$. Hence, by Theorem 2.2(2), we can get $P_{Z}(G,\Phi) \leq \overline{P}$.

(2) It is obvious to get $\underline{P}(G,\Phi,x) \geq \underline{P}$ for any $x \in Z$. And $\mu(Z) > 0$, then we have $P_{Z}(G,\Phi) \geq \underline{P}$ by Theorem 2.2(1).
\end{proof}

\begin{remark}
When $m=1$, i.e., $G=\{f\},~\Phi=\{\varphi\}$, the Corollary 7.2 coincides with the results that Climenhaga proved in \cite{Climenhaga1}.
\end{remark}

\section{\emph{The proof of Theorem 2.3}}
Let $X$ be a compact metric space with metric $d$, suppose a free semigroup with $m$ generators acting on $X$, the generators are  continuous transformations $G=\{ f_0, f_1, \ldots, f_{m-1}\}$ of $X$ and $\Phi=\{\varphi\}$, i.e., $\varphi_{0}=\varphi_{1}=\cdots=\varphi_{m-1}=\varphi \in C(X, \mathbb{R})$. For $w=i_{1}i_{2}\cdots i_{n} \in F_{m}^{+},$  denote $(S_{w} \Phi)(x):=(S_{w} \varphi)(x)$.

For any subset $Z\subset X$, $w \in F_m^+$ and $\varepsilon >0$, a subset $E \subset X$ is said to be a $(w, \varepsilon, Z, f_0, \ldots, f_{m-1})$-spanning set of $Z$, if for any $x \in Z$, there exists $y \in E$ such that $d_w(x,y) < \varepsilon$. A subset $F \subset Z$ is said to be a $(w, \varepsilon, Z, f_0, \ldots, f_{m-1})$-separated set of $Z$, if $x,y \in F$, $x\neq y$ implies $d_w(x,y) \geq \varepsilon$.

For any $w \in F_{m}^{+}$, $|w|=n$ and $\varepsilon > 0$, put
\begin{align*}
Q_{w}&(Z,G,\varphi,\varepsilon,n)\\
&=\inf \left\{\sum_{x \in E} e^{(S_{w}\varphi)(x)}: E~is~a~(w, \varepsilon, Z, f_0, \ldots, f_{m-1})-spanning~set~of~Z\right\},
\end{align*}
\[
 Q(Z,G,\varphi,\varepsilon,n)=\frac{1}{m^{n}} \sum_{|w|=n} Q_{w}(Z,G,\varphi,\varepsilon,n).
\]
Set
\[
Q(Z,G,\varphi,\varepsilon)=\limsup_{n \rightarrow \infty} \frac{1}{n} \log Q(Z,G,\varphi,\varepsilon,n)
\]

Similarly, for any $w \in F_{m}^{+}$, $|w|=n$ and $\varepsilon > 0$, set
\begin{align*}
P_{w}&(Z,G,\varphi,\varepsilon,n)\\
&=\sup \left\{\sum_{x \in F} e^{(S_{w}\varphi)(x)}: F~is~a~(w, \varepsilon, Z, f_0, \ldots, f_{m-1})-separated~set~of~Z\right\},
\end{align*}
\[
P(Z,G,\varphi,\varepsilon,n)=\frac{1}{m^{n}} \sum_{|w|=n} P_{w}(Z,G,\varphi,\varepsilon,n).
\]
Set
\[
P(Z,G,\varphi,\varepsilon)=\limsup_{n \rightarrow \infty} \frac{1}{n} \log P(Z,G,\varphi,\varepsilon,n).
\]

For any $ w \in F^+_m, |w|=n$, since
\[
\Lambda'_{w}(Z,\varphi,\varepsilon,n)=\inf\limits_{\mathcal{G}_{w}}\left\{\sum\limits_{B_{w}(x,\varepsilon) \in \mathcal{G}_{w}}\exp\bigg((S_{w}\varphi) (x)\bigg)\right\}
=Q_{w}(Z,G,\varphi,\varepsilon,n)
\]
then
$\Lambda'(Z,\varphi,\varepsilon,n)=Q(Z,G,\varphi,\varepsilon,n)$.
Therefore, by Remark 5.2(2) we can obtain
\begin{align*}
\overline{CP}_{Z}(G,\varphi)&=\lim\limits_{\varepsilon \rightarrow 0}\limsup_{n \rightarrow \infty} \frac{1}{n} \log Q(Z,G,\varphi,\varepsilon,n).\\
\end{align*}
 If $\delta=\sup \{|\varphi(x)-\varphi(y)|: d(x,y) \leq \frac{\varepsilon}{2}\}$, similar to the Lin et al ([17, Remark 3.4(5)]), we have
\[
Q(Z,G,\varphi,\varepsilon,n) \leq  P(Z,G,\varphi,\varepsilon,n) \leq e^{n\delta} Q(Z,G,\varphi,\frac{\varepsilon}{2},n).
\]
Moreover, we have
\[
\overline{CP}_{Z}(G,\varphi)=\lim\limits_{\varepsilon \rightarrow 0}\limsup_{n \rightarrow \infty} \frac{1}{n} \log P(Z,G,\varphi,\varepsilon,n).
\]

We assign the following skew-product transformation. It's base is $\Sigma_m$, it's fiber is X, and the maps $F: \Sigma_m \times X \to \Sigma_m \times X$ and $g: \Sigma_m \times X \to \mathbb{R}$ are defined by the formula
\[
F(\omega, x) = (\sigma_m \omega, f_{\omega_0}(x))
\]
and
\[
g(\omega, x) =c+\varphi(x),
\]
where $\omega = (\ldots, \omega_{-1}, \omega_0, \omega_1, \ldots) \in  \Sigma_m$ and $c$ is a constant number.
Here $f_{\omega_0}$ stands for $f_0$ if $\omega_0=0$, and for $f_1$ if $\omega_0=1$, and so on and
$\varphi \in C(X,\mathbb{R})$. Obviously, $g \in  C(\Sigma_m \times X ,\mathbb{R})$. Then
\begin{align*}
F^{n}(\omega, x)&= (\sigma^{n}_m \omega, f_{\omega_{n-1}}f_{\omega_{n-2}} \cdots f_{\omega_0}(x))\\
&=(\sigma^{n}_m \omega, f_{\overline{\omega|_{[0,n-1]}}}(x)).
\end{align*}
Moreover, $S_{n}g(\omega, x)=nc+S_{\overline{\omega|_{[0,n-1]}}}\varphi(x)$.
Let $\omega=(\ldots,\omega_{-1},\omega_0,\omega_1,\ldots)\in \Sigma_m$,
and the metric $D$ on $\Sigma_m \times X$ is defined as
\[
D\big((\omega,x), (\omega', x')\big) = \max\big(d'(\omega, \omega'), d(x, x')\big).
\]

For $ F: \Sigma_m \times X \to \Sigma_m \times X$. Let $G=\{F\}$, we can get a C-P structure on $\Sigma_m \times X$. For any set $ Z \subset X $, from Pesin \cite{Pesin}, we denote $\overline{CP}_{ \Sigma_m \times Z}(F,g)$ the upper capacity topological pressure of $ F $ on the set  $\Sigma_m \times Z$.
Our purpose is to find the relationship between the upper capacity topological pressure $\overline{CP}_{ \Sigma_m \times Z}(F,g)$ of the skew-product transformation $F$ and the upper capacity topological pressure $\overline{CP}_{Z}(G,\varphi)$ of a free semigroup action generated by $ G=\{f_0, \ldots, f_{m-1}\} $ on $Z$.

To prove this theorem, we give the following two lemmas. The proofs of these two lemmas are similar to that of Lin et al \cite{Lin}.
Therefore, we omit the proof.

\begin{lemma}\label{lem:skewleft}
For any subset $Z$ of $X$, $n \geq 1$ and $0 < \varepsilon < \frac{1}{2}$, we have
\begin{equation*}
P(\Sigma_m \times Z,F,g,\varepsilon,n) \geq e^{nc} m^{n} P(Z,G, \varphi, \varepsilon,n).
\end{equation*}
\end{lemma}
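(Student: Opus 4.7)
The plan is to construct a large $(n,\varepsilon)$-separated set for the skew-product $F$ in $\Sigma_m \times Z$ by lifting the separated sets for each finite word. Specifically, for each $w \in F_m^+$ with $|w|=n$, let $F_w \subset Z$ be a $(w,\varepsilon,Z,f_0,\ldots,f_{m-1})$-separated set that nearly realizes the supremum defining $P_w(Z,G,\varphi,\varepsilon,n)$. For each pair $(w,x)$ with $x\in F_w$, I will choose a sequence $\omega_{w,x}\in\Sigma_m$ whose coordinates on $[0,n-1]$ encode $w$ in the orientation that makes the Birkhoff sum identity $(S_n g)(\omega_{w,x},x)=nc+S_w\varphi(x)$ hold (concretely, setting $(\omega_{w,x})_{j-1}=i_j$ for $w=i_1\cdots i_n$), with the remaining coordinates fixed arbitrarily. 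The candidate separated set is $E:=\{(\omega_{w,x},x): w\in F_m^+(n),\, x\in F_w\}$.

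The verification of $\varepsilon$-separation splits into two cases. If two chosen pairs correspond to different words $w\neq w'$, the associated lifts differ at some coordinate $j\in[0,n-1]$. Applying $F^j$ shifts so that $(\sigma^j\omega_{w,x})_0\neq (\sigma^j\omega_{w',x'})_0$, yielding $d'(\sigma^j\omega_{w,x},\sigma^j\omega_{w',x'})=1$, and hence $D(F^j(\omega_{w,x},x),F^j(\omega_{w',x'},x'))\geq 1>\varepsilon$; the standing assumption $\varepsilon<1/2$ is used here. If the two pairs share the same word $w$ but $x\neq x'$, the hypothesis $d_w(x,x')\geq\varepsilon$ supplies a suffix $w''\leq\overline{w}$ with $d(f_{w''}(x),f_{w''}(x'))\geq\varepsilon$. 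At iterate $|w''|$, the sequence coordinates coincide (by construction $\omega_{w,x}=\omega_{w,x'}$), while the fiber coordinates are exactly $f_{w''}(x)$ and $f_{w''}(x')$, so $D(F^{|w''|}(\omega_{w,x},x),F^{|w''|}(\omega_{w,x'},x'))\geq\varepsilon$ as required.

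Once separation is established, summing the weights gives
\[
\sum_{(\omega,x)\in E} e^{(S_n g)(\omega,x)} \;=\; e^{nc}\sum_{w\in F_m^+(n)}\;\sum_{x\in F_w} e^{S_w\varphi(x)}.
\]
Taking the supremum over admissible $F_w$ for each $w$ and recalling the averaging definition of $P(Z,G,\varphi,\varepsilon,n)$, the right-hand side becomes $e^{nc}\cdot m^n\cdot P(Z,G,\varphi,\varepsilon,n)$. Since $E$ is $(n,\varepsilon)$-separated in $\Sigma_m\times Z$ for $F$, its weighted sum is a lower bound for $P(\Sigma_m\times Z,F,g,\varepsilon,n)$, which gives the claim.

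The main technical obstacle is the bookkeeping required to align the composition order of the generators with the one-sided iteration of $F$; one must carefully track the reversal convention (the distinction between $f_w$ and $f_{\overline{w}}$) so that the fiber trajectories $f_{\omega_{k-1}}\cdots f_{\omega_0}(x)$ appearing in $F^k(\omega,x)$ genuinely coincide with the iterates $f_{w''}(x)$ indexed by suffixes $w''\leq\overline{w}$ in the Bowen metric $d_w$. Beyond this alignment, the argument is a direct application of the definitions and the estimate $d'\geq 1>\varepsilon$ obtained from disagreement at coordinate zero.
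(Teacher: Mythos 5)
Your proof is correct and uses the standard lifting construction on which the paper's (omitted) argument, deferred to Lin et al \cite{Lin}, is based: you lift near-extremal $(w,\varepsilon)$-separated sets $F_w\subset Z$ to pairs $(\omega_w,x)$ with $\omega_w|_{[0,n-1]}$ encoding $w$, verify $\varepsilon$-separation for $F$ by a case split (a disagreement at a shifted coordinate yields $d'=1>\varepsilon$ when $w\neq w'$; the Bowen-metric hypothesis $d_w(x,x')\geq\varepsilon$ handles $w=w'$, after checking that $f_{\omega_{k-1}}\cdots f_{\omega_0}(x)=f_{w''}(x)$ for the length-$k$ suffix $w''\leq\overline{w}$), and convert weights via $S_n g(\omega_w,x)=nc+S_w\varphi(x)$. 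Your Birkhoff-sum identity is the correct one; the expression $S_n g(\omega,x)=nc+S_{\overline{\omega|_{[0,n-1]}}}\varphi(x)$ displayed in Section~8 just before the lemma carries a spurious overbar.
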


\begin{lemma}\label{lem:skewright}
For any subset $Z$ of $X$, $n \geq 1$ and $\varepsilon >0$, we have
\begin{equation*}\label{eq:rightSkew}
Q(\Sigma_m \times Z,F,g, \varepsilon,n) \leq K(\varepsilon) e^{nc} m^{n} Q(Z,G,\varphi, \varepsilon,n),
\end{equation*}
where $K(\varepsilon)$ is a positive constant that depends only on $\varepsilon$.
\end{lemma}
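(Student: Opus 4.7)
The plan is to construct, for each $n \geq 1$ and $\varepsilon > 0$, a $(n,\varepsilon)$-spanning set for $(\Sigma_m \times Z, F)$ out of semigroup $(w,\varepsilon,Z,f_0,\ldots,f_{m-1})$-spanning sets $E_w$ of $Z$ indexed by words $w \in F_m^+(n)$, and then to verify that the weighted sum $\sum e^{S_n g}$ telescopes into $e^{nc} m^n Q(Z,G,\varphi,\varepsilon,n)$ up to a combinatorial factor $K(\varepsilon)$ coming from a cylinder partition of $\Sigma_m$.

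First I would choose the integer $N=N(\varepsilon)$ smallest with $2^{-N}<\varepsilon$ and partition $\Sigma_m$ into the cylinders obtained by fixing the coordinates in $[-N, n-1+N]$. For each cylinder $\tau$ I fix a representative $\omega^*_\tau$. The elementary estimate is that whenever $\omega,\omega' \in \Sigma_m$ agree on $[-N, n-1+N]$ we have $d'(\sigma^j\omega,\sigma^j\omega') < \varepsilon$ for every $0 \leq j \leq n-1$. Moreover, each word $w \in F_m^+(n)$ (identified with the middle block $\omega^*_\tau|_{[0,n-1]}$ in the convention aligning the semigroup orbit with the skew-product orbit) arises from exactly $K(\varepsilon):=m^{2N}$ cylinders.

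Next, fix $\eta > 0$ and, for each $w \in F_m^+(n)$, pick a $(w,\varepsilon,Z,f_0,\ldots,f_{m-1})$-spanning set $E_w$ of $Z$ with $\sum_{y \in E_w} e^{S_w\varphi(y)} \leq Q_w(Z,G,\varphi,\varepsilon,n) + \eta m^{-n}$. Set
\[
E := \bigcup_{\tau}\bigl\{(\omega^*_\tau,y): y \in E_{w(\tau)}\bigr\},
\]
where $w(\tau)$ is the word of length $n$ associated with $\tau$. Given $(\omega,x) \in \Sigma_m \times Z$, let $\tau$ be the cylinder containing $\omega$ and choose $y \in E_{w(\tau)}$ with $d_{w(\tau)}(x,y) < \varepsilon$. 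The first coordinates are controlled by the cylinder choice; for the second coordinates, since $\omega$ and $\omega^*_\tau$ coincide on $[0,n-1]$, the composition maps appearing in $\pi_2 F^j(\omega,\cdot)$ and $\pi_2 F^j(\omega^*_\tau,\cdot)$ are identical for $0 \leq j \leq n-1$, so $d_{w(\tau)}(x,y)<\varepsilon$ bounds $d(\pi_2 F^j(\omega,x),\pi_2 F^j(\omega^*_\tau,y))$ at every $j$. Therefore $D(F^j(\omega,x),F^j(\omega^*_\tau,y))<\varepsilon$, proving that $E$ is $(n,\varepsilon)$-spanning for $F$ on $\Sigma_m \times Z$.

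To finish, I use $g(\omega,x)=c+\varphi(x)$ to get $S_n g(\omega^*_\tau,y)=nc+S_{w(\tau)}\varphi(y)$. Grouping the cylinders $\tau$ by their associated word $w$, each $w$ contributes $K(\varepsilon)$ identical summands, so
\[
Q(\Sigma_m \times Z,F,g,\varepsilon,n) \leq \sum_{(\omega,y) \in E} e^{S_n g(\omega,y)} \leq K(\varepsilon) e^{nc} \sum_{|w|=n}\bigl(Q_w(Z,G,\varphi,\varepsilon,n)+\eta m^{-n}\bigr).
\]
Letting $\eta \to 0$ and recalling $Q(Z,G,\varphi,\varepsilon,n)=m^{-n}\sum_{|w|=n} Q_w(Z,G,\varphi,\varepsilon,n)$ yields the desired inequality. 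The main obstacle is not any hard analytic step but the careful bookkeeping of conventions: one must verify that the correspondence $\tau \leftrightarrow w$ aligns the semigroup Bowen metric $d_w$ with the intermediate second components of the $F$-orbit, and keep track of the padding overhead $m^{2N}$ so that it can be absorbed into a constant $K(\varepsilon)$ independent of $n$ and $Z$.
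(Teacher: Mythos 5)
Your proposal is correct and follows exactly the argument the paper has in mind: the paper omits the proof, citing Lin et al.\ (and ultimately Bufetov's skew-product trick), which is precisely your construction of an $(n,\varepsilon)$-spanning set for $F$ from $(w,\varepsilon)$-spanning sets of $Z$ via the cylinder decomposition of $\Sigma_m$ on the coordinates $[-N,n-1+N]$, with the overhead factor $K(\varepsilon)=m^{2N}$ and the identity $S_n g(\omega,y)=nc+S_{w}\varphi(y)$ for $w=\omega|_{[0,n-1]}$. The alignment of $d_w$ (whose defining suffixes $w'\leq\overline{w}$ give exactly the compositions $f_{\omega_{j-1}}\circ\cdots\circ f_{\omega_0}$ appearing in $\pi_2 F^j$) with the Bowen metric of $F$ is handled consistently under the paper's conventions, so no further work is needed.
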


\begin{proof}[ Proof of Theorem 2.3]
From Lemma \ref{lem:skewleft} we have for any  subset $Z$ of $X$,
\[
P(\Sigma_m \times Z,F,g,\varepsilon,n) \geq e^{nc} m^{n} P(Z,G, \varphi, \varepsilon,n).
\]
whence, taking logarithms and limits, we obtain that
\[
\overline{CP}_{\Sigma_m \times Z}(F,g) \geq \log m + \overline{CP}_{Z}(G,\varphi)+c.
\]
In the same way, from Lemma \ref{lem:skewright}, we have
\[
Q(\Sigma_m \times Z,F,g, \varepsilon,n) \leq K(\varepsilon) e^{nc} m^{n} Q(Z,G,\varphi, \varepsilon,n),
\]
whence
\[
\overline{CP}_{\Sigma_m \times Z}(F,g)\leq \log m + \overline{CP}_{Z}(G,\varphi)+c.
\]
Thus the proof is complete.
\end{proof}

{\bf Acknowledgement.}  The work was supported by National Natural Science Foundation of China (grant no.11771149, 11671149) and Guangdong Natural Science Foundation 2018B0303110005.

\bibliographystyle{amsplain}

\end{document}